\begin{document}

\theoremstyle{plain}
 \newtheorem{thm}{Theorem}[section]
 \newtheorem{lem}[thm]{Lemma}
 \newtheorem{prop}[thm]{Proposition}
 \newtheorem{cor}[thm]{Corollary}
 \newtheorem{cl}[thm]{Claim}
 \newtheorem{conj}[thm]{Conjecture}
 \newtheorem{ques}[thm]{Question}
\theoremstyle{definition}
 \newtheorem{dfn}[thm]{Definition}
\theoremstyle{remark}
 \newtheorem{rem}[thm]{Remark}

\newcommand{\fig}[3][width=12cm]{
\begin{figure}[H]
 \centering 
 \includegraphics[#1,clip]{#2} 
 \caption{#3} 
\label{fig:#2}
\end{figure}}


\newcommand{\blank}{\vspace{0.5\baselineskip}}

\newcommand{\vol}{\mathrm{vol}}

\newcommand{\Whi}{\mathit{Whi}}
\newcommand{\Whip}{\mathit{Whi^{\prime}}}
\newcommand{\Whih}{\widehat{\mathit{Whi}}}
\newcommand{\Whiph}{\widehat{\mathit{Whi^{\prime}}}}
\newcommand{\Bor}{\mathit{Bor}_{6}}
\newcommand{\Mag}{\mathit{Mag}_{4}}
\newcommand{\Tet}{\mathit{Tet}_{8}}
\newcommand{\Pen}{\mathit{Pen}_{10}}
\newcommand{\Oct}{\mathit{Oct}_{8}}
\newcommand{\Teth}{\widehat{\mathit{Tet}}_{2}}
\newcommand{\Penh}{\widehat{\mathit{Pen}}_{4}}
\newcommand{\Octh}{\widehat{\mathit{Oct}}_{4}}

\title{UNIONS OF 3-PUNCTURED SPHERES IN HYPERBOLIC 3-MANIFOLDS}
\author{KEN'ICHI YOSHIDA}
\date{}

\maketitle

\begin{abstract}
We classify the topological types for the unions 
of the totally geodesic 3-punctured spheres 
in orientable hyperbolic 3-manifolds. 
General types of the unions appear in various hyperbolic 3-manifolds. 
Each of the special types of the unions appears 
only in a single hyperbolic 3-manifold 
or Dehn fillings of a single hyperbolic 3-manifold. 
Furthermore, we investigate bounds of the moduli of adjacent cusps 
for the union of linearly placed 3-punctured spheres. 
\end{abstract}

\section{Introduction}
\label{section:intro}

In this paper, 
we consider totally geodesic 3-punctured spheres 
in orientable hyperbolic 3-manifolds. 
The $\epsilon$-thick part of an orientable hyperbolic 3-manifold $M$ is 
its submanifold $M_{[\epsilon, \infty)}$ 
such that the open ball of radius $\epsilon$ 
centered at any $x \in M_{[\epsilon, \infty)}$ is embedded in $M$. 
We call it simply the thick part 
after fixing $\epsilon$ to be at most the Margulis constant for $\mathbb{H}^{3}$. 
Then the thin part (i.e. the complement of the thick part) 
is the disjoint union of tubes and cusp neighborhoods. 
A tube is a regular neighbourhood of a closed geodesic of length less than $2\epsilon$. 
By removing the cusp neighborhoods from $M$, we obtain a 3-manifold $M_{0}$. 
Then the interior of $M_{0}$ is homeomorphic to $M$, 
and a boundary component of $M_{0}$ is a torus or annulus, called a cusp. 
For convenience, 
we ignore the distinction between $M$ and $M_{0}$. 
Thus an orientable hyperbolic 3-manifold of finite volume 
is regarded as a compact 3-manifold with (possibly empty) boundary consisting of torus cusps.

The upper half-space model gives the identifications of 
the ideal boundary 
$\partial \mathbb{H}^{3} \cong \widehat{\mathbb{C}} 
= \mathbb{C} \cup \{ \infty \}$ 
and the group of the orientation-preserving isometries 
$\mathrm{Isom}^{+}(\mathbb{H}^{3}) \cong \mathrm{PSL}(2,\mathbb{C})$. 
A torus cusp neighborhood  is isometric to 
a neighborhood of the image of $\infty$ in $\mathbb{H}^{3}/G$, 
where $G \cong \mathbb{Z}^{2}$ is a group 
that consists of parabolic elements fixing $\infty$. 
Thus a torus cusp admits the natural Euclidean structure 
up to scaling.

A 3-punctured sphere is obtained by removing three points from the 2-sphere, 
but we often regard it as 
a compact orientable surface of genus zero with three boundary components. 
We always assume that 
the boundary of a 3-punctured sphere in a hyperbolic 3-manifold 
is contained in the cusps. 
Adams~\cite{adams1985thrice} showed that 
an essential 3-punctured sphere in an orientable hyperbolic 3-manifold 
is isotopic to a totally geodesic one. 
A totally geodesic 3-punctured sphere is 
isometric to the double of an ideal triangle 
in the hyperbolic plane $\mathbb{H}^{2}$. 
Moreover, the hyperbolic structure of a 3-punctured sphere 
is unique up to isometry. 
After we cut a hyperbolic 3-manifold 
along a totally geodesic 3-punctured sphere,  
we can glue it again by an isometry along the new boundary. 
Since there are six orientation-preserving isometries 
of a totally geodesic 3-punctured sphere, 
we can construct hyperbolic 3-manifolds with a common volume.

We remark on an immersed 3-punctured sphere. 
The existence of a non-embedded immersed 3-punctured sphere 
almost determines the ambient hyperbolic 3-manifold. 

\begin{thm}[Agol~\cite{agol2009pants}]
\label{thm:immersed}
Let $\Sigma$ be an immersed 3-punctured sphere 
in an orientable hyperbolic 3-manifold $M$. 
Suppose that $\Sigma$ is not homotopic to an embedded one. 
Then $M$ is obtained by a (possibly empty) Dehn filling 
on a cusp of the Whitehead link complement. 
Furthermore, $\Sigma$ is homotopic to a totally geodesic 3-punctured sphere 
immersed in $M$ as shown in Figure~\ref{fig:u3ps-nonemb}. 
\end{thm}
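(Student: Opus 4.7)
The plan is to pass to the universal cover and exploit the rigidity of the 3-punctured sphere's hyperbolic structure. By Adams' theorem already cited in the excerpt, we may assume $\Sigma$ is realized by a totally geodesic immersion. The preimage of $\Sigma$ in $\mathbb{H}^{3}$ is then a countable collection of totally geodesic hyperbolic planes; since $\Sigma$ is not homotopic to an embedding, two such planes $P_{1}$ and $P_{2}$, not related by any element of $\mathrm{Stab}(P_{1})$, must meet transversally along a complete geodesic $\ell \subset \mathbb{H}^{3}$.

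The first concrete step is to normalize $P_{1}$ to be the standard copy $\mathbb{H}^{2} \subset \mathbb{H}^{3}$ with cusps at $0, 1, \infty \in \widehat{\mathbb{C}}$, so that $\mathrm{Stab}(P_{1}) \cong \Gamma(2) \subset \mathrm{PSL}(2,\mathbb{R})$. Because every cusp of $\Sigma$ is required to lie in a torus cusp of $M$, each of $0, 1, \infty$ is fixed by a rank-two parabolic subgroup of $\pi_{1}(M)$ containing the corresponding rank-one parabolic of $\Gamma(2)$. Writing the transverse generator at $\infty$ as $z \mapsto z + \tau$ for some $\tau \in \mathbb{C}$ with $\mathrm{Im}(\tau) > 0$, and recording analogous data at $0$ and $1$, the cusp shapes of $M$ along $\Sigma$ are pinned down to a small list of complex parameters that are conjugate to one another by elements of $\Gamma(2)$ extended across the cusps.

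The cusps of $P_{2}$ are three further parabolic fixed points of $\pi_{1}(M)$, and its boundary circle must cross $\widehat{\mathbb{R}} = \partial P_{1}$ transversally since $P_{1} \cap P_{2} = \ell \neq \emptyset$. Using Adams' horoball packing estimates for 3-punctured spheres, together with the requirement that horoballs based at the parabolic fixed points of $P_{2}$ be compatible with those at the parabolic fixed points of $P_{1}$, I would enumerate the possible positions of $P_{2}$ relative to $P_{1}$ and show that, modulo the six isometries of a 3-punctured sphere, essentially a single configuration is realized. Identifying this configuration with the two natural intersecting totally geodesic 3-punctured spheres inside the Whitehead link complement, the subgroup $\Gamma_{0} \subset \pi_{1}(M)$ generated by $\mathrm{Stab}(P_{1})$, $\mathrm{Stab}(P_{2})$, and an element conjugating one into the other will be isomorphic to the fundamental group of the Whitehead link complement.

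Finally, $\pi_{1}(M)$ is generated by $\Gamma_{0}$ together with the $\mathbb{Z}^{2}$ parabolic groups at the (at most two) cusps of $\Gamma_{0}$, so $\pi_{1}(M)$ is obtained from $\pi_{1}$ of the Whitehead link complement by adjoining at most two parabolic generators, which realizes $M$ as a (possibly empty) Dehn filling, and the immersion $\Sigma \hookrightarrow M$ descends from the self-intersecting totally geodesic 3-punctured sphere indicated in Figure~\ref{fig:u3ps-nonemb}. The hard part will be the rigidity/enumeration step: proving that the only way two totally geodesic 3-punctured spheres in $\mathbb{H}^{3}$ can intersect, subject to all six cusps being honest $\mathbb{Z}^{2}$ cusps of the same hyperbolic 3-manifold and to both surfaces being the same immersed $\Sigma$, is the Whitehead configuration. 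This should reduce to a finite calculation in $\mathrm{PSL}(2,\mathbb{C})$ with $\Gamma(2)$ and the admissible cusp-shape parameters $\tau$, and it is where the bulk of the work lies.
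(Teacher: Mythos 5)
The paper itself offers no proof of Theorem~\ref{thm:immersed}: it is quoted from Agol~\cite{agol2009pants}. Measured against Agol's argument, your outline has the right opening moves (rigidity of the thrice-punctured sphere group, normalizing $\mathrm{Stab}(P_{1})$ to $\Gamma(2)$, and using discreteness to constrain a translate plane $P_{2}=gP_{1}$ meeting $P_{1}$), but the step you yourself defer as ``where the bulk of the work lies'' is essentially the whole theorem, and its stated goal is not even the right one. Since the conclusion allows $M$ to be \emph{any} hyperbolic Dehn filling of one cusp of the Whitehead link complement, the holonomy of the conjugating element $g$ and the cusp shape $\tau$ vary continuously with the filling; there is therefore not ``essentially a single configuration'' of $(P_{1},P_{2})$ in $\mathrm{PSL}(2,\mathbb{C})$ but a one-complex-parameter family, and what the parabolicity/discreteness constraints (Shimizu--Leutbecher with translation length $2$, exactly as in Proposition~\ref{prop:modulusbound}, rather than horoball packing) can force is only the combinatorial pattern of how the ideal points of $P_{2}$ and the geodesic $\ell$ sit relative to the cusps. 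Making that precise is the proof, and it is absent from the proposal.

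The endgame is also flawed as written. You assert that $\pi_{1}(M)$ is generated by $\Gamma_{0}$ together with the cusp subgroups, and that $M$ is a Dehn filling because $\pi_{1}(M)$ is obtained by ``adjoining parabolic generators'' to $\pi_{1}$ of the Whitehead link complement. Neither claim is justified, and the second has the algebra backwards: Dehn filling produces a quotient of $\pi_{1}(\mathbb{W}_{1})$ (it kills a peripheral element), not an extension by new generators. Note also that if $\Gamma_{0}$ were the actual lattice $\pi_{1}(\mathbb{W}_{1})<\mathrm{PSL}(2,\mathbb{C})$ sitting inside the discrete group $\pi_{1}(M)$, it would have finite index, making $M$ a finite quotient of $\mathbb{W}_{1}$ rather than a filling --- so the identification of $\Gamma_{0}$ can at best be as an abstract group. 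The correct conclusion, and the one this paper uses repeatedly for the embedded types (e.g.\ for $\Whih_{n}$ in Section~\ref{section:description}), is topological: show that a regular neighborhood of the image of $\Sigma$ together with the adjacent torus cusp is homeomorphic to the Whitehead link complement with one boundary torus as frontier, and then invoke irreducibility and atoroidality of $M$ to force the complementary piece to be a solid torus or empty. You should replace the group-generation step with that argument.
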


\fig[width=5cm]{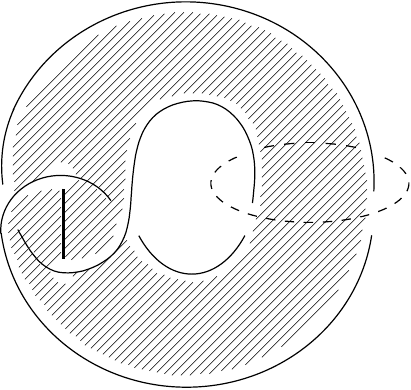}{Non-embedded 3-punctured sphere}

From now on, we consider embedded totally geodesic 3-punctured spheres. 
If all the 3-punctured spheres in a hyperbolic 3-manifold are disjoint, 
we can standardly decompose the 3-manifold along the 3-punctured spheres. 
However, 3-punctured spheres may intersect. 
Thus we consider all the 3-punctured spheres. 
Although the unions of 3-punctured spheres may be complicated, 
we can classify them. 
The JSJ decomposition of an irreducible orientable 3-manifold 
gives atoroidal pieces and Seifert pieces, 
and then every essential torus in the 3-manifold can be isotoped 
into a Seifert piece. 
Theorem~\ref{thm:main} can be regarded as 
an analog of the classification of the Seifert 3-manifolds.

\begin{thm}
\label{thm:main}
Let $M$ be an orientable hyperbolic 3-manifold. 
Suppose that $X$ is a connected component of the union 
of all the (embedded) totally geodesic 3-punctured spheres in $M$. 
Let $N(X)$ be a regular neighborhood of $X$. 
By abuse of terminology, 
we refer to the topological type of the pair $(N(X), X)$ as the type of $X$. 
If $X$ consists of finitely many 3-punctured spheres, 
then $X$ is one of the following types: 
\begin{itemize}
\item (general types) 

$A_{n} (n \geq 1), B_{2n} (n \geq 1),  T_{3}, T_{4}.$
\item (types determining the manifolds) 

$\Whi_{2n} (n \geq 2), \Whip_{4n} (n \geq 2), \Bor, \Mag, \Tet, \Pen, \Oct.$
\item (types almost determining the manifolds) 

$\Whih_{n} (n \geq 2), \Whiph_{2n} (n \geq 1), \Teth, \Penh, \Octh.$
\end{itemize}
The indices indicate the numbers of 3-punctured spheres. 
\end{thm}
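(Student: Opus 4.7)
The plan is to reduce the classification to a combinatorial analysis of how totally geodesic 3-punctured spheres can meet in $M$. I would begin with the local picture at a cusp $c$: each totally geodesic 3-punctured sphere with a boundary loop on $c$ meets the Euclidean cusp torus along a closed horocycle of a fixed length (determined by the canonical hyperbolic structure on a pair of pants), and each interior intersection of two such surfaces at $c$ contributes an arc pattern on the cusp torus. A short computation in the upper half-space with the horoball at $\infty$ shows that, up to the $\mathrm{PSL}(2,\mathbb{Z})$-symmetry of the cusp, only finitely many mutual slope configurations of horocyclic loops are compatible with a totally geodesic 3-punctured sphere, and the transverse angles at which two such spheres can meet are restricted to a small discrete set arising from reflection symmetries of the ideal triangle.

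Using this local data I would analyze pairwise intersections in the bulk. Two distinct totally geodesic 3-punctured spheres that meet nontrivially must intersect transversely in a disjoint union of complete geodesic arcs, each running cusp-to-cusp on both surfaces; isotopy classes of essential arc systems on a pair of pants are few in number, so the possible adjacencies are finite. I would then encode $X$ by a ``sphere graph'' whose vertices are the constituent 3-punctured spheres and whose edges record either boundary identifications or transverse intersection arcs, with labels keeping track of the cusp slope data. Attaching a new sphere along a boundary circle produces the linear chain types $A_{n}$ and $B_{2n}$, while simple transverse attachments produce $T_{3}$ and $T_{4}$; these are the general types because nothing in the construction forces further spheres to appear.

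Finally I would enumerate what happens when the local slope data forces additional totally geodesic 3-punctured spheres that must also be counted into $X$. Here I expect the analysis to bifurcate: either $X$ extends indefinitely (contradicting the finiteness hypothesis unless it closes up), or the forced sphere graph closes in one of finitely many rigid patterns. The main obstacle is showing that each rigid closure is realized by exactly one ambient $M$ (or by Dehn fillings on a single $M$). My approach is to lift $N(X)$ to a union of totally geodesic hyperbolic planes in $\mathbb{H}^{3}$, read off the cell structure cut out on these planes, and recognize the resulting ideal polyhedral decomposition as that of a specific census manifold (Whitehead link, Borromean rings, Magic manifold, and the tetrahedral/pentahedral/octahedral examples referred to by \Tet, \Pen, \Oct). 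Theorem~\ref{thm:immersed} is then invoked to rule out accidental non-embedded coincidences, completing the list.
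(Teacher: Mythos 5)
Your overall architecture (cusp-level slope data, pairwise intersection analysis, then a combinatorial assembly of the union) does parallel the paper's strategy, but there are two genuine gaps that would sink the argument as written.

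First, your claim that ``the transverse angles at which two such spheres can meet are restricted to a small discrete set arising from reflection symmetries of the ideal triangle'' is false. The angle at which two totally geodesic 3-punctured spheres of type $A_{2}$ meet equals $\arg\tau$, where $\tau$ is the modulus of an adjacent cusp, and the set of realizable moduli is a two-dimensional region (the paper's Section~\ref{section:parameter} shows $\mathcal{C}_{2}^{\mathrm{incomp}}$ is a closed disk, with only the bound $0.079<\arg\tau<\pi-0.079$). Rigidity of the angle holds only in the three-geodesic case, which the paper handles by an explicit lift to $\mathbb{H}^{3}$ producing a regular ideal octahedron --- and that configuration is then excluded precisely because it forces $M$ to be non-orientable (Lemma~\ref{lem:3int}; the manifold $\mathbb{N}_{3}$ realizes it). Your proposal never uses orientability, yet the classification fails without it; the paper's engine here is the parity Lemma~\ref{lem:parity} on signs of boundary-loop intersections in a common cusp, which kills the (S,S)-intersection and several two-geodesic combinations. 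Some substitute for that orientation argument is indispensable.

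Second, and more seriously, recognizing $N(X)$ as sitting inside a specific census manifold does not establish the theorem: you must also show that each special manifold ($\mathbb{W}_{2}$, the Borromean rings complement, $\mathbb{M}_{3},\dots,\mathbb{M}_{6}$, and their hyperbolic Dehn fillings) contains \emph{no} totally geodesic 3-punctured spheres beyond the listed ones --- otherwise $X$ could be strictly larger than the claimed type. This completeness step is the bulk of the paper's technical work (Lemmas~\ref{lem:bor6}--\ref{lem:tet2}, \ref{lem:whi3}, \ref{lem:specialcusp}) and requires tools absent from your outline: Miyamoto's volume inequality for manifolds with totally geodesic boundary (giving the bound $\lfloor\vol(M)/V_{oct}\rfloor$ on disjoint 3-punctured spheres), explicit computations of unit Thurston norm balls to constrain which homology classes a 3-punctured sphere can represent, and the observation that homologous 3-punctured spheres must be disjoint. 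Finally, Theorem~\ref{thm:immersed} plays no role in this proof --- the statement concerns embedded surfaces from the outset --- so invoking it to ``rule out accidental non-embedded coincidences'' does not close any gap.
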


\fig[width=10cm]{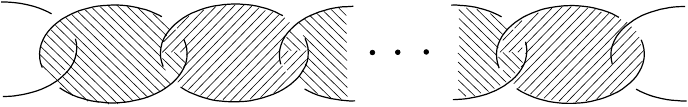}{$A_{n}$}
\fig[width=10cm]{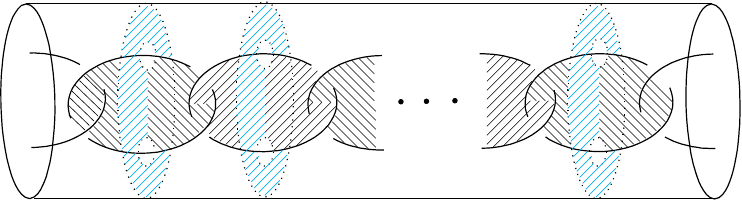}{$B_{2n}$}
\fig[width=8cm]{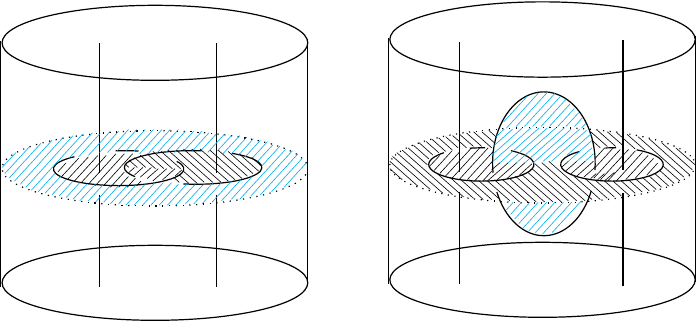}{$T_{3}$ and $T_{4}$}
\fig[width=12cm]{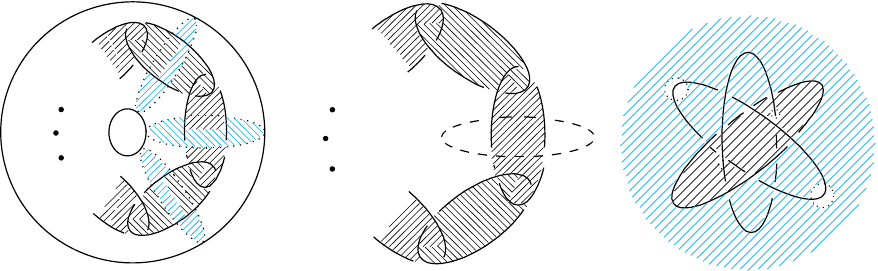}{$\mathit{Whi^{(\prime)}}_{2n}, \widehat{\mathit{Whi^{(\prime)}}}_{n}$, and $\Bor$}
\fig[width=12cm]{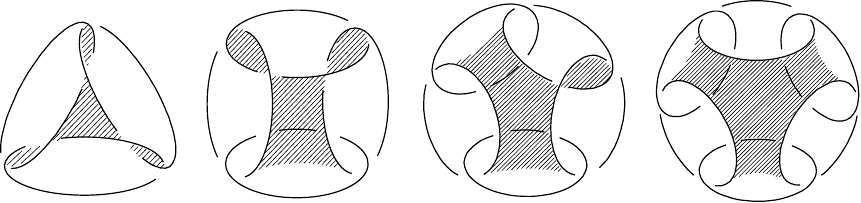}{$\Mag, \Tet, \Pen$, and $\Oct$}
\fig[width=12cm]{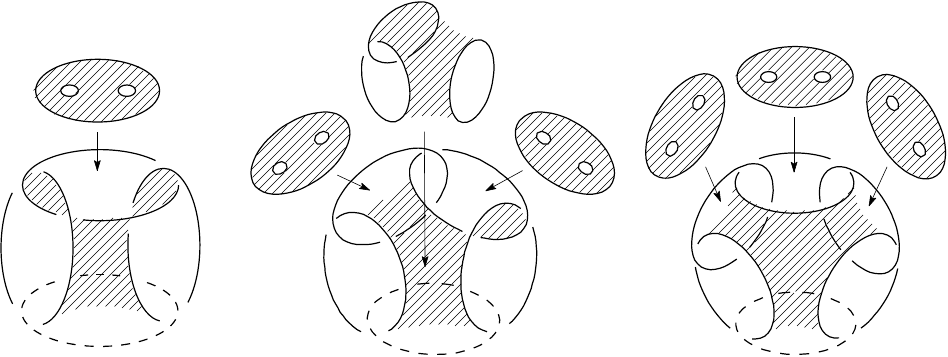}{$\Teth, \Penh$, and $\Octh$}

\fig[width=12cm]{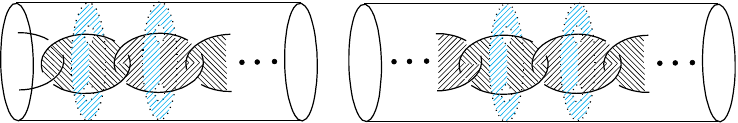}{$B_{\infty}$ and $\Whi_{\infty}$}

\begin{thm}
\label{thm:infinite}

Let $M$ be an orientable hyperbolic 3-manifold. 
Suppose that $X$ is a connected component of the union 
of all the totally geodesic 3-punctured spheres in $M$. 
If $X$ consists of infinitely many 3-punctured spheres, 
then $X$ is the type $B_{\infty}$ or $\Whi_{\infty}$. 
\end{thm}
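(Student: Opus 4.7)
The plan is to reduce the infinite classification to Theorem~\ref{thm:main} by a local-to-global argument. Suppose $X$ is a connected component consisting of infinitely many totally geodesic 3-punctured spheres. First I would observe that $X$ is locally finite, in the sense that each 3-punctured sphere $\Sigma \subset X$ meets only finitely many other spheres of $X$: their pairwise intersections are disjoint geodesic arcs in the compact surface $\Sigma$, and on each cusp torus the boundary curves of incident spheres are simple closed geodesics of equal length that are pairwise disjoint in the torus, hence parallel and finite in number. Consequently the incidence graph of $X$ (spheres as vertices, shared intersections or cusps as edges) is locally finite, connected, and infinite.

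Next I would exhaust $X$ by an increasing sequence of finite connected sub-unions $X_{1} \subset X_{2} \subset \cdots$ with $\bigcup_{k} X_{k} = X$. Although each $X_{k}$ is not a maximal component of the union of 3-punctured spheres in $M$, its internal combinatorics at every sphere and every cusp surrounded by $X_{k}$ must match the corresponding local pattern of some type classified in Theorem~\ref{thm:main}, because this local combinatorics is a purely metric feature of totally geodesic 3-punctured spheres meeting along fixed cusp horocycles and can equally well occur inside a finite component. This yields a local classification: the germ of $X$ around any $\Sigma \subset X$, and around any cusp met by $X$, realizes one of the vertex or cusp patterns occurring in the finite types $A_{n}$, $B_{2n}$, $T_{3}$, $T_{4}$, $\Whi_{2n}$, $\Whip_{4n}$, $\Bor$, $\Mag$, $\Tet$, $\Pen$, $\Oct$, or their hatted variants.

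Now I would eliminate all but $B_{2n}$ and $\Whi_{2n}$ as possible local models for an infinite component. The sporadic types $\Bor$, $\Mag$, $\Tet$, $\Pen$, $\Oct$ and the hatted variants $\Teth$, $\Penh$, $\Octh$ consist of rigid finite configurations whose local pattern at any interior sphere forces a specific bounded completion, and so cannot appear as the germ of an infinite connected union; the same rigidity holds for $T_{3}$ and $T_{4}$. The type $A_{n}$ has an interior local pattern that admits indefinite extension, but extending it cannot close up consistently into an internal cusp without producing a $B_{2n}$-type pattern, so the only admissible bi-infinite chain has the $B_{\infty}$ combinatorics. The remaining types $B_{2n}$ and $\Whi_{2n}$ have periodic local patterns that extend naturally to infinite connected unions, yielding the configurations $B_{\infty}$ and $\Whi_{\infty}$ depicted in Figure~\ref{fig:u3ps-infinite}.

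The principal obstacle is the local-to-global step: verifying that the germ of $X$ at each sphere and cusp is forced by Theorem~\ref{thm:main}, and that these germs assemble uniquely into one of the two advertised infinite patterns. In particular, one must distinguish the $B$-type and $\Whi$-type local behavior from each other and from the $A$-type behavior, so that the infinite extension of an $A$-local pattern coincides with $B_{\infty}$ rather than constituting a separate case. This requires careful combinatorial analysis of how adjacent spheres fit along a common cusp, using the same techniques that drove the finite classification.
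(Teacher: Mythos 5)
Your reduction to a local analysis reproduces, in spirit, what the paper's Section~\ref{section:proof} already provides (and indeed Lemma~\ref{lem:type2} is stated so as to allow $2n=\infty$, handling the infinite unions containing an (S,N)-intersection directly). But there is a genuine gap at exactly the step you flag as ``the principal obstacle'': you do not rule out an infinite linear chain of type-(i) intersections only --- an ``$A_{\infty}$'' with no additional blue 3-punctured spheres. Your assertion that extending the $A$-pattern ``cannot close up consistently into an internal cusp without producing a $B_{2n}$-type pattern'' is unsupported, and no purely combinatorial or local argument can supply it: the germ of an $A_{\infty}$ chain at every sphere and every cusp is identical to the germ of the interior of a long finite $A_{n}$, which is a perfectly admissible type. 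The paper explicitly isolates this as the nontrivial point and defers it to Section~\ref{section:parameter}.

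The actual mechanism is geometric, not combinatorial. The paper shows (Theorem~\ref{thm:boundlim}, via Lemma~\ref{lem:fillb2n} and the Hodgson--Kerckhoff/Brock--Bromberg filling and drilling estimates of Theorem~\ref{thm:filling}) that the set $\mathcal{C}_{n}$ of possible adjacent-cusp moduli for an $A_{n}$ configuration shrinks to the single point $2i$ as $n\to\infty$; hence an infinite linear chain forces every adjacent cusp to have modulus exactly $2i$. At $\tau=2i$ the commutator of the two meridional parabolics is itself parabolic (Proposition~\ref{prop:bmodulus}), producing an accidental parabolic that bounds an essential --- hence, by Adams, totally geodesic --- additional 3-punctured sphere at each adjacent cusp. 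These are the blue spheres, and their presence upgrades the union to $B_{\infty}$ or $\Whi_{\infty}$. Without this analytic input (or some substitute for it), your argument cannot exclude $A_{\infty}$ as a third infinite type. A minor additional slip: $\Whi_{\infty}$ is the bi-infinite configuration and $B_{\infty}$ the one-sided one, and both already include the blue spheres --- neither is merely an infinite $A$-chain.
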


Careful descriptions of these types will be given in Section~\ref{section:description}. 
Now we explain them briefly. 
The general types appear in various manifolds. 
For any finite multiset of general types, 
there are infinitely many hyperbolic 3-manifolds 
containing 3-punctured spheres of such types. 
When the type $B_{2n}, T_{3}$, or $T_{4}$ appears, 
there are additional isolated 3-punctured spheres, 
which are contained in the boundary of 3-manifolds 
shown in Figures~\ref{fig:u3ps-b2n} and \ref{fig:u3ps-t34}. 
In contrast, each of the determining types 
appears only in a certain special manifold. 
Not all the 3-punctured spheres are shown 
in Figures~\ref{fig:u3ps-whibor} and \ref{fig:u3ps-magtetpenoct}, 
because there are too many 3-punctured spheres. 
The almost determining types 
appear only in manifolds obtained by Dehn fillings 
of such special manifolds. 
The dashed circles in Figures~\ref{fig:u3ps-whibor} and \ref{fig:u3ps-tetpenocthat} 
indicate filled cusps. 
For an (almost) determining type, the ambient 3-manifold has finite volume. 
For each $n \geq 2$, 
the unions of 3-punctured spheres of the types $\Whi_{4n}$ and $\Whip_{4n}$ 
have a common topology as topological spaces, 
but they are distinguished by their neighborhoods. 
The same argument holds for $\Whih_{2n}$ and $\Whiph_{2n}$.

For $3 \leq n \leq 6$, let $\mathbb{M}_{n}$ denote 
the minimally twisted hyperbolic $n$-chain link complement 
as shown in Figure~\ref{fig:u3ps-magtetpenoct}. 
The 3-punctured spheres of the types 
$\Mag, \Tet, \Pen$, and $\Oct$ 
are respectively contained only in the manifolds
$\mathbb{M}_{3}, \mathbb{M}_{4}, \mathbb{M}_{5}$, and $\mathbb{M}_{6}$. 
These manifolds are quite special 
for several reasons. 
Agol~\cite{agol2010minimal} conjectured that they have the smallest volume 
of the $n$-cusped orientable hyperbolic 3-manifolds. 
This conjecture was proven for $\mathbb{M}_{4}$ 
by the author~\cite{yoshida2013minimal}.

The manifold $\mathbb{M}_{3}$ is called the magic manifold 
by Gordon and Wu~\cite{gordon1999toroidal}. 
It is known that many interesting examples are obtained 
by Dehn fillings of $\mathbb{M}_{3}$. 
For example, 
Kin and Takasawa~\cite{kin2011pseudo} showed that 
some mapping tori of punctured disk fibers with small entropy 
appear as Dehn fillings of $\mathbb{M}_{3}$. 
The manifold $\mathbb{M}_{5}$ is known to 
give most of the census manifolds~\cite{callahan1999census} 
by Dehn fillings. 
Martelli, Petronio, and Roukema~\cite{martelli2014exceptional} 
classified the non-hyperbolic manifolds 
obtained by Dehn fillings of $\mathbb{M}_{5}$. 
Kolpakov and Martelli~\cite{kolpakov2013hyperbolic} constructed 
the first examples of finite volume hyperbolic 4-manifolds 
with exactly one cusp 
by using $\mathbb{M}_{6}$. 
Baker~\cite{baker2002all} showed that 
every link in $S^{3}$ is a sublink of a link 
whose complement is a covering of $\mathbb{M}_{6}$. 
He then used the link in Figure~\ref{fig:u3ps-m6var} 
to construct coverings of $\mathbb{M}_{6}$ efficiently. 
We can show that the complement of this link is 
homeomorphic to $\mathbb{M}_{6}$ 
by finding eight 3-punctured spheres of the type $\Oct$ 
(see also \cite[Lemma 5.9]{kin2018braids}).

Moreover, 
the manifolds $\mathbb{M}_{3}, \mathbb{M}_{4}, \mathbb{M}_{5}$, 
and $\mathbb{M}_{6}$ 
are arithmetic hyperbolic 3-manifolds. 
A cusped finite volume hyperbolic 3-manifold is arithmetic 
if and only if 
its fundamental group is commensurable to 
a Bianchi group $\mathrm{PSL}(2,\mathcal{O}_{d})$, 
where $\mathcal{O}_{d}$ is the ring of integers 
of the imaginary quadratic field $\mathbb{Q}(\sqrt{-d})$ 
(see \cite[Ch. 8]{maclachlan2013arithmetic} for more details). 
Thurston~\cite[Ch. 6]{thurston1978geometry} gave 
an explicit representation of $\pi_{1}(\mathbb{M}_{3})$ 
as a subgroup of $\mathrm{PSL}(2,\mathcal{O}_{7})$. 
Baker~\cite{baker2002all} gave 
an explicit representation of $\pi_{1}(\mathbb{M}_{4})$ 
as a subgroup of $\mathrm{PSL}(2,\mathcal{O}_{1})$, 
and showed that 
$\mathbb{M}_{6}$ is a double covering of $\mathbb{M}_{4}$. 
The fundamental group of the Whitehead link complement 
is also commensurable to $\mathrm{PSL}(2,\mathcal{O}_{1})$. 
Hatcher~\cite{hatcher1983hyperbolic} showed that 
the fundamental group of a hyperbolic 3-manifold 
obtained from regular ideal tetrahedra 
(resp. regular ideal octahedra) 
is commensurable to 
$\mathrm{PSL}(2,\mathcal{O}_{3})$ 
(resp. $\mathrm{PSL}(2,\mathcal{O}_{1})$). 
As we will describe in Section~\ref{section:description}, 
the manifold $\mathbb{M}_{5}$ 
can be decomposed into ten regular ideal tetrahedra. 
Hence $\pi_{1}(\mathbb{M}_{5})$ is commensurable to 
$\mathrm{PSL}(2,\mathcal{O}_{3})$. 
In addition, Kin and Rolfsen~\cite{kin2018braids} studied 
bi-orderability of their fundamental groups. 

\fig[width=6cm]{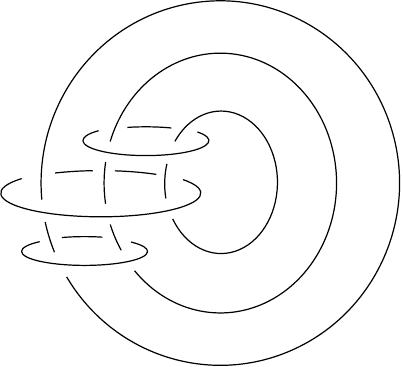}{A link whose complement is $\mathbb{M}_{6}$}

Eudave-Mu\~{n}oz and Ozawa~\cite{eudave2019characterization} 
characterized non-hyperbolic 3-component links in the 3-sphere 
whose complements contain essential 3-punctured spheres 
with non-integral boundary slopes. 
Moreover, they conjectured that a hyperbolic link complement does not contain 
an essential $n$-punctured sphere 
with non-meridional and non-integral boundary slope. 
Since our result does not concern embeddings of hyperbolic 3-manifolds 
in the 3-sphere, 
we do not solve this conjecture for 3-punctured spheres. 
Nevertheless, our result might be helpful to approach this conjecture.

\section{Description of the types of the unions of 3-punctured spheres}
\label{section:description}

In this section 
we describe the types of the unions of 3-punctured spheres 
in Theorems~\ref{thm:main} and \ref{thm:infinite}. 
This section concerns the existence of the 3-punctured spheres. 
In Section~\ref{section:proof} we will show that 
each special manifold has no other 3-punctured spheres 
than the described ones. 

We first introduce manifolds containing the 3-punctured spheres 
of some types. 
Let $\mathbb{W}_{n}$ denote the manifold 
as shown in the left of Figure~\ref{fig:u3ps-whibor} 
that is an $n$-sheeted cyclic cover of the Whitehead link complement. 
Let $\mathbb{W}^{\prime}_{n}$ denote the manifold 
obtained by a half twist 
along a blue 3-punctured sphere of $\mathbb{W}_{n}$, 
which can be also shown in the left of Figure~\ref{fig:u3ps-whibor}. 
The manifolds $\mathbb{W}_{n}$ and $\mathbb{W}^{\prime}_{n}$
are homeomorphic to certain link complements. 
For odd $n$, the manifold $\mathbb{W}^{\prime}_{n}$ is homeomorphic 
to $\mathbb{W}_{n}$ by reversing orientation. 
Kaiser, Purcell and Rollins~\cite{kaiser2012volumes} 
described more details on these manifolds. 
Note that our notations are different from theirs. 
The manifolds 
$\mathbb{W}_{2n-1}, \mathbb{W}_{4n-2}, \mathbb{W}_{4n}, 
\mathbb{W}^{\prime}_{4n-2}$, and $\mathbb{W}^{\prime}_{4n}$
are respectively homeomorphic to 
$\widehat{W}_{2n-1}, \overline{W}_{4n-2}, \widehat{W}_{4n}, 
\widehat{W}_{4n-2}$, and $\overline{W}_{4n}$ 
in \cite{kaiser2012volumes}. 

It is well known that 
the Whitehead link complement $\mathbb{W}_{1}$ can be 
decomposed into a regular ideal octahedron. 
Hence 
$\vol(\mathbb{W}_{n}) = \vol(\mathbb{W}^{\prime}_{n}) = nV_{oct}$, 
where $V_{oct} = 3.6638...$ is the volume of a regular ideal octahedron. 
At present, 
this is the smallest known volume 
of the $(n+1)$-cusped orientable hyperbolic 3-manifolds for $n \geq 10$. 
The manifold $\mathbb{W}^{\prime}_{2}$ is the Borromean rings complement. 

We recall that 
the manifolds $\mathbb{M}_{3}, \dots , \mathbb{M}_{6}$ are 
the minimally twisted hyperbolic $n$-chain link complements 
for $n=3, \dots , 6$ 
as shown in Figure~\ref{fig:u3ps-magtetpenoct}. 

For $n \geq 1$, let $\mathbb{B}_{n}$ denote 
the hyperbolic 3-manifold with totally geodesic boundary 
obtained by cutting $\mathbb{W}_{n}$ 
along a blue 3-punctured sphere 
shown in the left of Figure~\ref{fig:u3ps-whibor}. 
The manifold $\mathbb{B}_{n}$ is shown in Figure~\ref{fig:u3ps-b2n}. 
The manifold $\mathbb{B}_{1}$ can be decomposed 
into a regular ideal octahedron 
as shown in Figure~\ref{fig:u3ps-b1decomp}, 
where faces $X$ and $X^{\prime}$ are glued 
so that the orientations of edges match. 
We will use this decomposition in Section~\ref{section:parameter}. 

Similarly, 
let $\mathbb{T}_{3}$ and $\mathbb{T}_{4}$ denote 
the hyperbolic 3-manifolds with totally geodesic boundary 
respectively obtained by cutting $\mathbb{M}_{5}$ and $\mathbb{M}_{6}$ 
along a 3-punctured sphere. 
The manifolds $\mathbb{T}_{3}$ and $\mathbb{T}_{4}$ are shown in Figure~\ref{fig:u3ps-t34}. 

\fig[width=12cm]{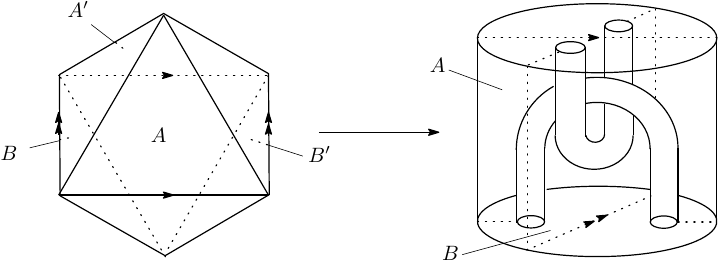}{Gluing of a regular ideal octahedron for $\mathbb{B}_{1}$}

\subsection*{$A_{n}$}

The 3-punctured spheres of the type $A_{n}$ are placed linearly, 
and can be regarded as the most general types. 
We consider an isolated 3-punctured sphere as the type $A_{1}$. 
For example, 
an appropriate Dehn filling of $\mathbb{W}_{n+3}$ gives a manifold 
with 3-punctured spheres of the type $A_{n}$.

\subsection*{$B_{2n}, T_{3}, T_{4}$}

The manifolds $\mathbb{B}_{n+1}, \mathbb{T}_{3}$, and $\mathbb{T}_{4}$ 
respectively contain 
3-punctured spheres of the types $B_{2n}, T_{3}$, and $T_{4}$. 
If there are 3-punctured spheres 
of the type $B_{2n}, T_{3}$, or $T_{4}$, 
there are two (possibly identical) 
isolated 3-punctured spheres that correspond to 
the boundary of $\mathbb{T}_{3}, \mathbb{T}_{4}$, or $\mathbb{B}_{n+1}$. 

The 3-punctured spheres of the type $B_{2n}$ 
consist of $n$ 3-punctured spheres of the type $A_{n}$ 
and $n$ more 3-punctured spheres (shown in blue) which intersect the former ones. 
The 3-punctured spheres of the type $T_{3}$ 
intersect at a common geodesic. 
The type $T_{3}$ is symmetric. 
In other words, 
for any pair of 3-punctured spheres in $\mathbb{T}_{3}$, 
there is an isometry of $\mathbb{T}_{3}$
that maps one of the pair to the other. 
A blue 3-punctured sphere in the type $T_{4}$ intersects 
the three other 3-punctured spheres. 
The latter three 3-punctured spheres are symmetric. 

For each of the types $B_{2n}, T_{3}$, and $T_{4}$, 
the metric of neighborhood of the union is uniquely determined 
in $\mathbb{B}_{n+1}, \mathbb{T}_{3}$, or $\mathbb{T}_{4}$. 
In contrast, the metric of neighborhood of the union of the type $A_{n}$ for $n \geq 2$ 
depends on the modulus of an adjacent torus cusp. 
We will consider this modulus in Section~\ref{section:parameter}.

\subsection*{$\widehat{\mathit{Whi^{(\prime)}}}_{n}$}

If 3-punctured spheres are placed cyclically, 
their union is the type $\Whih_{n}$ or $\Whiph_{2n}$. 
The two types $\Whih_{2n}$ and $\Whiph_{2n}$ 
are distinguished by neighborhoods. 

Suppose that a hyperbolic 3-manifold $M$ contains 3-punctured spheres of the type $\Whih_{n}$.  Then the union of the $n$ 3-punctured spheres with the $n$ adjacent torus cusps 
has a regular neighborhood homeomorphic to the manifold $\mathbb{W}_{n}$. 
The ambient 3-manifold $M$ is obtained by a Dehn filling on a cusp of $\mathbb{W}_{n}$ 
since it is atoroidal. 
The same argument holds for $\Whiph_{2n}$ in $\mathbb{W}^{\prime}_{2n}$. 
In fact, such a surgered hyperbolic 3-manifold 
except $\mathbb{M}_{3}, \dots , \mathbb{M}_{6}$ 
contains no more 3-punctured spheres. 

We remark that the Whitehead link complement $\mathbb{W}_{1}$ 
has two embedded 3-punctured spheres of the type $\Whiph_{2}$.

\subsection*{$\mathit{Whi^{(\prime)}}_{2n}$}

The 3-punctured spheres of the type 
$\mathit{Whi^{(\prime)}}_{2n}$ consist of 
the $n$ 3-punctured spheres of the type $\widehat{\mathit{Whi^{(\prime)}}}_{n}$ 
and $n$ more 3-punctured spheres (shown in blue in the left of Figure~\ref{fig:u3ps-whibor}) 
which intersect the former ones. 
The type $\mathit{Whi^{(\prime)}}_{2n}$ can be regarded as a cyclic version of $B_{2n}$. 

The 3-punctured spheres of the types $\Whi_{2n}$ and $\Whip_{4n}$ 
are respectively contained only in the manifolds 
$\mathbb{W}_{n}$ and $\mathbb{W}^{\prime}_{2n}$. 
Suppose that a hyperbolic 3-manifold $M$ contains 3-punctured spheres of the type $\Whi_{2n}$. 
Since $M$ contains 3-punctured spheres of the type $\Whih_{n}$, 
it is obtained by a (possibly empty) Dehn filling on a cusp of $\mathbb{W}_{n}$. 
Then the Dehn filling must be empty, 
because $M$ has a cusp that does not intersect 
the 3-punctured spheres of the type $\Whih_{n}$. 
Thus $M$ is uniquely determined as $\mathbb{W}_{n}$. 
The same argument holds for $\Whip_{4n}$ in $\mathbb{W}^{\prime}_{2n}$.

\subsection*{$\mathit{\Bor}$}

The Borromean rings complement $\mathbb{W}^{\prime}_{2}$ 
contains six 3-punctured spheres of the type $Bor_{6}$ 
instead of $\Whip_{4}$. 
In order to display them, 
we put the Borromean rings 
so that each component is contained 
in a plane in $\mathbb{R}^{3}$. 
Then there are two 3-punctured spheres 
in the union of each plane with the infinite point 
as shown in the right of Figure~\ref{fig:u3ps-whibor}. 

The union of the 3-punctured spheres of the type $Bor_{6}$  with the adjacent torus cusps 
has a regular neighborhood whose boundary consists of spheres. 
The ambient hyperbolic 3-manifold is uniquely determined as $\mathbb{W}^{\prime}_{2}$ 
since it is irreducible.

\subsection*{$\Mag, \Tet, \Pen, \Oct$}

The 3-punctured spheres of the types $\Mag, \Tet, \Pen$, and $\Oct$ 
are respectively contained only in the manifolds 
$\mathbb{M}_{3}, \mathbb{M}_{4}, \mathbb{M}_{5}$, and $\mathbb{M}_{6}$. 
For $3 \leq n \leq 6$, the manifold $\mathbb{M}_{n}$ contains 
3-punctured spheres of the type $\widehat{\mathit{Whi^{(\prime)}}}_{n}$ 
and more 3-punctured spheres as shown in Figure~\ref{fig:u3ps-magtetpenoct}. 
Rotational symmetry gives the remaining 3-punctured spheres. 

Let $X$ be the union of the 3-punctured spheres of such a special type. 
The union of $X$ with the adjacent torus cusps 
has a regular neighborhood whose boundary consists of spheres. 
The ambient hyperbolic 3-manifold is uniquely determined 
since it is irreducible.

\subsection*{$\Teth, \Penh, \Octh$}

The 3-punctured spheres of the types $\Teth, \Penh$, and $\Octh$ 
are respectively contained only in the hyperbolic manifolds 
obtained by Dehn fillings on a cusp 
of $\mathbb{M}_{4}, \mathbb{M}_{5}$, and $\mathbb{M}_{6}$. 
In fact, such a surgered hyperbolic 3-manifold 
except $\mathbb{M}_{3}, \mathbb{M}_{4}$, and $\mathbb{M}_{5}$ 
contains no more 3-punctured spheres. 
The 3-punctured spheres of the types $\Teth, \Penh$, and $\Octh$ 
come from the ones of the types $\Tet, \Pen$, and $\Oct$ 
that are disjoint from the filled cusps. 

Let $X$ be the union of the 3-punctured spheres of such a special type. 
The union of $X$ with the adjacent torus cusps 
has a regular neighborhood homeomorphic to 
$\mathbb{M}_{4}, \mathbb{M}_{5} -$ two balls, or $\mathbb{M}_{6}$. 
The ambient hyperbolic 3-manifold is obtained by a Dehn filling on a cusp 
of $\mathbb{M}_{4}, \mathbb{M}_{5}$, or $\mathbb{M}_{6}$ 
since it is irreducible and atoroidal.

\subsection*{$B_{\infty}, \Whi_{\infty}$}

There are two types of the union of infinitely many 3-punctured spheres, 
which are infinite versions of $B_{2n}$. 
The 3-punctured spheres of the type $\Whi_{\infty}$ extend infinitely to both sides, 
and is contained only in the manifold $\mathbb{W}_{\infty}$ 
shown in the right of Figure~\ref{fig:u3ps-infinite}, 
which is an infinite cyclic cover of the Whitehead link complement $\mathbb{W}_{1}$. 
The 3-punctured spheres of the type $B_{\infty}$ extend infinitely to one side, 
and is contained in half of $\mathbb{W}_{\infty}$ 
shown in the left of Figure~\ref{fig:u3ps-infinite}. 

It is possible to consider infinite versions of $A_{n}$, 
but in such a case there is a cusp that bounds additional 3-punctured spheres. 
Then the union is $B_{\infty}$ or $\Whi_{\infty}$. 
We will prove it in Section~\ref{section:parameter}.

\subsection{Symmetries of $\Mag, \Tet, \Pen$, and $\Oct$}

The central 3-punctured sphere of $\mathbb{M}_{3}$ 
in Figure~\ref{fig:u3ps-magtetpenoct} is special 
in the sense that this is the unique 3-punctured sphere 
intersecting any other one at two geodesics. 
In contrast, 
for any pair of the 3-punctured spheres 
in $\mathbb{M}_{4}, \mathbb{M}_{5}$, or $\mathbb{M}_{6}$, 
the manifold has an isometry 
that maps one of the pair to the other. 

\fig[width=12cm]{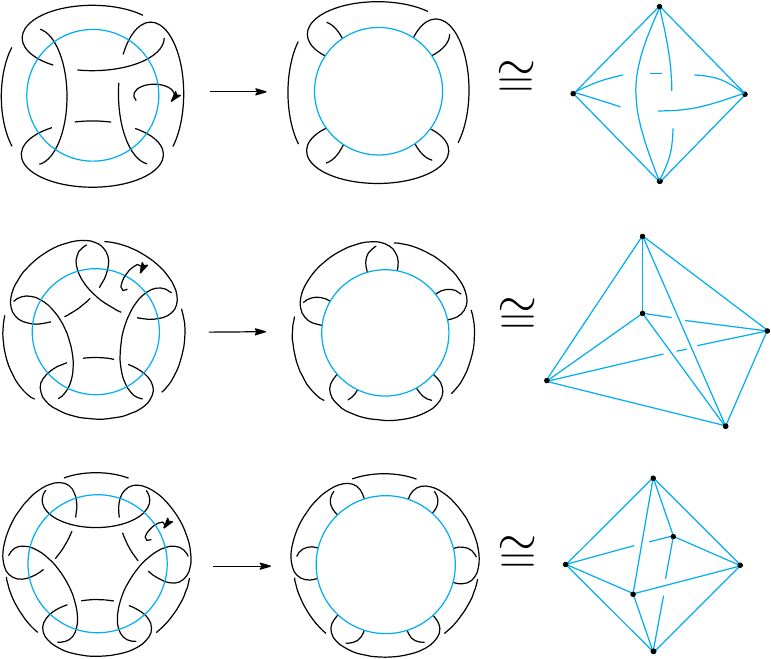}{Quotients of $\mathbb{M}_{4}, \mathbb{M}_{5}$, and $\mathbb{M}_{6}$}

Following Dunfield and Thurston~\cite{dunfield2003virtual}, 
we describe 
the manifolds $\mathbb{M}_{4}, \mathbb{M}_{5}$, and $\mathbb{M}_{6}$ 
with respect to their intrinsic symmetry. 
Each of $\mathbb{M}_{4}, \mathbb{M}_{5}$, and $\mathbb{M}_{6}$ 
has an involutional isometry 
that rotates about the blue circle in Figure~\ref{fig:u3ps-quotient}. 
The quotients by these involutions are 
naturally decomposed into ideal polyhedra. 
(In the deformations shown in the right side of Figure~\ref{fig:u3ps-quotient}, 
the black arcs shrink to the black vertices.) 
Then the original manifolds are recovered by the double branched coverings. 
The quotient of $\mathbb{M}_{5}$ is the boundary of a 4-dimensional simplex 
(a.k.a. a pentachoron) made of five regular ideal tetrahedra. 
The quotient of $\mathbb{M}_{6}$ 
is the double of a regular ideal octahedron. 
The quotient of $\mathbb{M}_{4}$ 
is decomposed into four ideal tetrahedra 
whose dihedral angles are $\pi/4, \pi/4$, and $\pi/2$. 
Each 3-punctured sphere 
in $\mathbb{M}_{4}, \mathbb{M}_{5}$, and $\mathbb{M}_{6}$ 
is the preimage of a face of these ideal polyhedra 
by the double branched covering. 
In particular, 
the manifolds $\mathbb{M}_{4}, \mathbb{M}_{5}$, and $\mathbb{M}_{6}$ 
have isometries that can map a 3-punctured sphere to any other one. 

If we cut the manifolds 
$\mathbb{M}_{3}, \mathbb{M}_{4}, \mathbb{M}_{5}$, and $\mathbb{M}_{6}$ 
along all the 3-punctured spheres, 
we respectively obtain 
two ideal triangular prisms, 
eight ideal tetrahedra 
each of which is a quarter of a regular ideal octahedron, 
ten regular ideal tetrahedra, 
and four regular ideal octahedra. 
By Sakuma and Weeks~\cite{sakuma1995examples}, 
these are the canonical decompositions 
in the sense of Epstein and Penner~\cite{epstein1988euclidean}.

\subsection{Graphs for the unions}

In Figure~\ref{fig:u3ps-graph}, 
we give graphs that indicate how the 3-punctured spheres intersect. 
The vertices of a graph correspond to the 3-punctured spheres. 
Two vertices are connected by an edge 
if the corresponding 3-punctured spheres intersect. 
Two vertices are connected by two edges 
if the corresponding 3-punctured spheres intersect at two geodesics. 
An edge is oriented 
if the corresponding intersection is separating in a 3-punctured sphere 
and non-separating in the other 3-punctured sphere. 
Our notations $T_{3}$ and $T_{4}$ come from the triangle and tripod of the graphs. 

\fig[width=12cm]{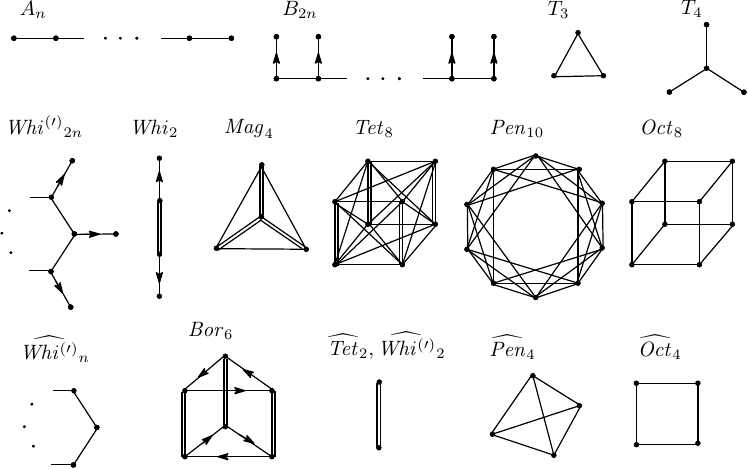}{Graphs indicating intersection of 3-punctured spheres}

\subsection{3-punctured spheres in a non-orientable hyperbolic 3-manifold}

We remark that the assumption of orientability is necessary. 
For instance, 
we can obtain a non-orientable hyperbolic 3-manifold $\mathbb{N}_{3}$
by gluing one regular ideal octahedron 
as shown in Figure~\ref{fig:u3ps-nonori}.  
The manifold $\mathbb{N}_{3}$ was given 
by Adams and Sherman~\cite{adams1991minimum} 
as the 3-cusped hyperbolic 3-manifold of minimal complexity. 
We remark that the manifold $\mathbb{M}_{5}$ is 
the 5-cusped hyperbolic 3-manifold of minimal complexity. 
The pairs of faces $A \cup B$ and $C \cup D$ 
are mapped to two 3-punctured spheres in $\mathbb{N}_{3}$. 
These 3-punctured spheres intersect at three geodesics. 
Such intersection does not appear 
in an orientable hyperbolic 3-manifold 
as we will show in Lemma~\ref{lem:3int}. 

\fig[width=5cm]{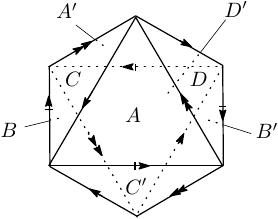}{Gluing of a regular ideal octahedron 
for $\mathbb{N}_{3}$}

\section{Proof of the classification}
\label{section:proof}

We begin to prove Theorem~\ref{thm:main}. 
Theorem~\ref{thm:infinite} will be proven in Section~\ref{section:parameter}. 
We consider totally geodesic embedded 3-punctured spheres 
in an orientable hyperbolic 3-manifold. 
For simplicity, 
we assume that every 3-punctured sphere is totally geodesic. 
We first consider the intersection of two 3-punctured spheres. 
After that,
we classify the union of 3-punctured spheres.

\subsection{Intersection of two 3-punctured spheres}
\label{subsection:intersection}

In this subsection, we classify the intersection of two 3-punctured spheres. 
The intersection of two 3-punctured spheres 
consists of disjoint simple geodesics. 
There are six simple geodesics in a 3-punctured sphere. 
Three of them are non-separating, 
and the other three are separating 
as shown in Figure~\ref{fig:u3ps-geodesic}. 
A component of the intersection of two 3-punctured spheres  
is a type (N,N), (S,N), or (S,S) 
depending on whether 
the geodesic is non-separating or separating in the two 3-punctured spheres. 
Of course, ``N'' and ``S'' respectively indicate non-separating and separating. 
In Lemma~\ref{lem:ss}, we will show that an (S,S)-intersection does not occur. 
The unions of the types $A_{2}$ and $B_{2}$, shown in Theorem~\ref{thm:main}, 
respectively contain an (N,N)-intersection and an (S,N)-intersection. 

\begin{prop}
\label{prop:intersection}
The intersection of two 3-punctured spheres 
in an orientable hyperbolic 3-manifold 
is one of the following types: 
\begin{itemize}
\item[(o)]
The intersection is empty, i.e. 
the two 3-punctured spheres are disjoint. 
\item[(i)]
The intersection consists of an (N,N)-intersection. 
\item[(ii)]
The intersection consists of an (S,N)-intersection. 
\item[(iii)]
The intersection consists of two (N,N)-intersections. 
\end{itemize}
\end{prop}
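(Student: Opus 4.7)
The plan is to analyse the intersection $\Sigma_1\cap\Sigma_2$ of two distinct totally geodesic 3-punctured spheres component by component, using three ingredients: (a) the topology of essential simple arcs in a pair of pants, (b) the forthcoming Lemma~\ref{lem:ss} excluding (S,S)-intersections together with Lemma~\ref{lem:3int} bounding the number of components by two, and (c) a counting argument on the boundary circles of $\Sigma_1,\Sigma_2$ at their common cusps.

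First I would observe that every component of $\Sigma_1\cap\Sigma_2$ is a geodesic in $M$ lying on both totally geodesic surfaces. Because every essential simple closed curve on a 3-punctured sphere is peripheral, it is realised by a horocycle rather than a geodesic; hence no component is closed, and every component is a proper geodesic arc with both ends running to cusps of $M$, representing one of the six simple geodesic arcs of Figure~\ref{fig:u3ps-geodesic}. Thus each component carries a pair of labels $(X,Y)\in\{\mathrm{N},\mathrm{S}\}^2$. Applying Lemma~\ref{lem:ss} discards the label (S,S). A direct check shows that two distinct S-arcs in a pair of pants always cross (each separates an incompatible pair of cusps), so at most one component can carry an S-label in $\Sigma_1$ and at most one in $\Sigma_2$. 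Together with Lemma~\ref{lem:3int}, this reduces the problem to ruling out two-component configurations of types (S,N)+(N,N) (equivalently (N,S)+(N,N)) and (S,N)+(N,S).

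The elimination of these mixed configurations proceeds by counting arc-endpoints on each boundary circle of $\Sigma_1$ at a common cusp torus $T$ of $M$. In each configuration I would (i) enumerate the possible identifications of the cusps of $\Sigma_1$ and $\Sigma_2$ with the cusps of $M$ (noting in particular that different cusps of $\Sigma_i$ may coincide in $M$), (ii) list the resulting parallel horocyclic boundary circles $\beta_1^{(\ast)}\subset\Sigma_1\cap T$ and $\beta_2^{(\ast)}\subset\Sigma_2\cap T$ with respective common slopes $\mu_1,\mu_2$, and (iii) match the tally of endpoints on each individual $\beta_1^{(k)}$ against the identity
\[
 \#\bigl(\text{ends on }\beta_1^{(k)}\bigr)=\sum_{j} i\bigl(\beta_1^{(k)},\beta_2^{(j)}\bigr)=\bigl(\#\text{ of boundary circles of }\Sigma_2\text{ on }T\bigr)\cdot i(\mu_1,\mu_2).
\]
In each bad configuration this forces $i(\mu_1,\mu_2)$ to be a non-integer (typical outcomes are $2\,i(\mu_1,\mu_2)=3$ or $3\,i(\mu_1,\mu_2)=1$), yielding the required contradiction.

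The hardest part will be the last step, where a careful enumeration of how cusps of $\Sigma_1$ and $\Sigma_2$ can coincide in $M$ is required, and in every subcase one must verify that the per-circle endpoint count is incompatible with the slope-intersection identity. The orientability assumption enters twice: directly through Lemma~\ref{lem:3int}, and implicitly through the fact that boundary slopes of totally geodesic 3-punctured spheres are honestly embedded curves on the cusp tori. Once the excluded configurations are dealt with, the remaining options are exactly those listed, namely the empty intersection, one (N,N), one (S,N), or two (N,N).
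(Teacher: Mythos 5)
Your proposal is correct and follows essentially the same route as the paper: three components are excluded by Lemma~\ref{lem:3int}, (S,S)-components by Lemma~\ref{lem:ss}, and the remaining mixed two-component configurations by counting endpoints of the intersection arcs on the parallel boundary loops of $\Sigma_1$ and $\Sigma_2$ inside a common cusp, which is precisely the content of the paper's Lemmas~\ref{lem:nn-ns} and~\ref{lem:ns-ns}. The only cosmetic difference is that you phrase the cusp-level contradiction as a uniform slope-intersection identity, whereas the paper checks the same incompatible intersection counts case by case.
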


\fig[width=8cm]{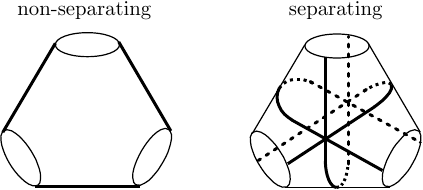}{The simple geodesics in a 3-punctured sphere}

We show that the other types of intersection are impossible. 
There are at most three disjoint simple geodesics 
in a 3-punctured sphere. 
We first show that the intersection of two 3-punctured spheres 
does not consist of three geodesics. 

\begin{lem}
\label{lem:3int}
Let $M$ be a (possibly non-orientable) hyperbolic 3-manifold. 
Suppose that $M$ contains 
3-punctured spheres $\Sigma_{0}$ and $\Sigma_{1}$ 
that intersect at three geodesics. 
Then $M$ is decomposed into a regular ideal octahedron 
along $\Sigma_{0}$ and $\Sigma_{1}$. 
Furthermore, $M$ is non-orientable. 
\end{lem}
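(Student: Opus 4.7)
Proof proposal. The plan is to read off the decomposition of $M$ from the combinatorial structure of $X := \Sigma_0 \cup \Sigma_1$ and identify the result with the one regular-ideal-octahedron gluing shown in Figure~\ref{fig:u3ps-nonori}.

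First I would pin down which three geodesics of $\Sigma_0$ make up the intersection. A 3-punctured sphere has exactly six simple geodesics --- three non-separating ideal ``edges'' and three separating loops around the cusps (Figure~\ref{fig:u3ps-geodesic}) --- and any three pairwise disjoint ones are either all non-separating or all separating. Since a non-separating simple geodesic is asymptotic to cusps while a separating one is a closed curve, each curve of $\Sigma_0 \cap \Sigma_1$ has the same type in both surfaces. The ``all separating'' case would give three closed geodesics of $M$ each cutting off a once-punctured disk from both $\Sigma_0$ and $\Sigma_1$; a matching-up argument (paralleling Lemma~\ref{lem:ss}) rules this out. Hence the three intersection geodesics are the three non-separating ideal edges of both $\Sigma_0$ and $\Sigma_1$.

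Second, cutting each $\Sigma_i$ along its three ideal edges decomposes it into two ideal triangles, so $X$ is a 2-complex with $3$ ideal vertices (cusps), $3$ ideal $1$-cells, and $4$ ideal-triangular $2$-cells; at every $1$-cell all four $2$-cells are incident, cyclically alternating between $\Sigma_0$ and $\Sigma_1$. Cut $M$ open along $X$ to obtain a hyperbolic manifold $M^{\ast}$ with cusps and totally geodesic boundary consisting of $8$ ideal triangles (each $2$-cell of $X$ contributing two copies). Around each of the three edges of $X$, the four dihedral sectors of $M^{\ast}$ have alternating angles $\theta_i, \pi - \theta_i$, and each edge of $X$ becomes four edges of $M^{\ast}$, giving $12$ edges in total.

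Third, I would identify $M^{\ast}$ as a single regular ideal octahedron. The rotational symmetry permuting the three ideal edges of each triangle of $X$, combined with the rigidity of ideal hyperbolic polyhedra whose face combinatorics is given, forces $\theta_1 = \theta_2 = \theta_3$; the face/vertex/edge count ($8$ faces, $6$ ideal vertices, $12$ edges) matches the ideal octahedron, and the dihedral-angle consistency around each edge forces each $\theta_i = \pi/2$, so $M^{\ast}$ is the regular ideal octahedron. The face-pairing reassembling $M$ from $M^{\ast}$ is then determined by the cyclic alternation of $\Sigma_0, \Sigma_1$ around each edge; a direct inspection identifies it with the Adams--Sherman gluing of Figure~\ref{fig:u3ps-nonori}, which reverses orientation across a face pair, so $M$ is non-orientable.

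The main obstacle is Step~3: rigorously ruling out a non-regular ideal octahedron or an assemblage of several smaller ideal polyhedra as the complement. I expect to push through using the uniquely determined cyclic link structure at each edge of $X$ together with rigidity of hyperbolic ideal polyhedra with prescribed face combinatorics, which should leave the regular ideal octahedron as the only possibility.
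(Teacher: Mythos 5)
Your overall outline (cut along $X=\Sigma_0\cup\Sigma_1$, identify the complement as one regular ideal octahedron, read off non-orientability from the gluing) matches the shape of the paper's conclusion, but two of your three steps contain genuine errors or gaps. In Step~1, the premises are wrong: a separating simple geodesic in a hyperbolic 3-punctured sphere is not a closed curve but a bi-infinite geodesic with both ends running out the same cusp, so an intersection component can be separating on one side and non-separating on the other (this is exactly the type (S,N) the paper analyses in Lemma~\ref{lem:ss} and Proposition~\ref{prop:intersection}); moreover mixed pairwise-disjoint triples such as $\{s_1,e_{12},e_{13}\}$ (one separating, two non-separating) do exist, so ``any three pairwise disjoint ones are all of one type'' is false. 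This classification turns out to be unnecessary --- cutting $\Sigma_i$ along any disjoint triple still yields two ideal triangles --- but as written the step is unsound.

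The decisive gap is Step~3, which you flag but do not fill, and it is the heart of the lemma. Euler-characteristic bookkeeping plus ``rigidity of ideal polyhedra with prescribed face combinatorics'' cannot close it: nothing in your set-up shows that the cut-open manifold is connected, is a single ideal polyhedron, or has any particular dihedral angles (the four sectors $\theta,\pi-\theta,\theta,\pi-\theta$ around an edge sum to $2\pi$ for every $\theta$, so no local consistency condition forces $\theta=\pi/2$). The missing idea is a universal-cover argument: a lift $\widetilde{\Sigma}_0^0$ of $\Sigma_0$ contains an ideal triangle $\Delta$ whose edges are lifts of $\Sigma_0\cap\Sigma_1$, and the three lifts of $\Sigma_1$ through these edges pass pairwise through common ideal vertices of $\Delta$; embeddedness of $\Sigma_1$ forces them to be disjoint, hence tangent at infinity and orthogonal to $\widetilde{\Sigma}_0^0$, and iterating this normalization pins the eight relevant lifts down to the faces of a regular ideal octahedron, after which an area count ($8\pi$ versus $2\times 2\pi$ per surface) shows there is nothing else. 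Finally, your non-orientability argument is also under-justified: the face-pairing is not determined by the cyclic alternation of $\Sigma_0,\Sigma_1$ around the edges, and there are several one-octahedron manifolds (two orientable and six non-orientable, by Goerner's census). One must rule out the two orientable ones --- the Whitehead link complement and the $(-2,3,8)$-pretzel link complement --- by checking that neither gluing produces two embedded 3-punctured spheres from octahedron faces; ``direct inspection identifies it with Figure~\ref{fig:u3ps-nonori}'' assumes what is to be proved.
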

\begin{proof}
We regard the hyperbolic 3-space $\mathbb{H}^{3}$ 
as the universal covering of $M$, 
and use the upper-half space model of $\mathbb{H}^{3}$. 
We denote by $\overline{(a,b)}$ the geodesic in $\mathbb{H}^{3}$ 
whose endpoints are 
$a, b \in \widehat{\mathbb{C}} = \partial \mathbb{H}^{3}$. 
Let $\widetilde{\Sigma}_{i} \subset \mathbb{H}^{3}$ denote 
the preimage of $\Sigma_{i}$ for $i=0,1$. 
Let $\widetilde{\Sigma}_{0}^{0}$ be 
a component of $\widetilde{\Sigma}_{0}$. 
The plane $\widetilde{\Sigma}_{0}^{0}$ contains an ideal triangle $\Delta$ 
whose edges are lifts of the intersection $\Sigma_{0} \cap \Sigma_{1}$. 
We may assume that the vertices of $\Delta$ are 
$0, 1, \infty \in \widehat{\mathbb{C}} = \partial \mathbb{H}^{3}$. 
Hence there are components $\widetilde{\Sigma}_{1}^{k}$ 
of $\widetilde{\Sigma}_{1}$ for $k=0,1,2$ 
such that 
$\widetilde{\Sigma}_{0}^{0} \cap \widetilde{\Sigma}_{1}^{0} 
= \overline{(0,\infty)}, 
\widetilde{\Sigma}_{0}^{0} \cap \widetilde{\Sigma}_{1}^{1} 
= \overline{(1,\infty)}$, and 
$\widetilde{\Sigma}_{0}^{0} \cap \widetilde{\Sigma}_{1}^{2} 
= \overline{(0,1)}$. 
Since $\Sigma_{1}$ is embedded in $M$, 
the three planes 
$\widetilde{\Sigma}_{1}^{0}, \widetilde{\Sigma}_{1}^{1}$, and 
$\widetilde{\Sigma}_{1}^{2}$ are mutually disjoint. 
Therefore $\widetilde{\Sigma}_{1}^{0}, \widetilde{\Sigma}_{1}^{1}$, and 
$\widetilde{\Sigma}_{1}^{2}$ orthogonally 
intersect $\widetilde{\Sigma}_{0}^{0}$. 

In the same manner, 
there are components 
$\widetilde{\Sigma}_{0}^{1}$ and $\widetilde{\Sigma}_{0}^{2}$ 
of $\widetilde{\Sigma}_{0}$ 
such that 
$\widetilde{\Sigma}_{0}^{1} \cap \widetilde{\Sigma}_{1}^{0} 
= \overline{(ai,\infty)}$ 
and $\widetilde{\Sigma}_{0}^{2} \cap \widetilde{\Sigma}_{1}^{0} 
= \overline{(0,ai)}$ 
for $a>0$. 
The planes $\widetilde{\Sigma}_{0}^{1}$ and 
$\widetilde{\Sigma}_{0}^{2}$ orthogonally 
intersect $\widetilde{\Sigma}_{1}^{0}$. 

We continue the same argument. 
Since each of the preimages of $\Sigma_{0}$ and $\Sigma_{1}$ 
consists of disjoint planes, 
we have $a=1$. 
There are components $\widetilde{\Sigma}_{j}^{3}$ 
of $\widetilde{\Sigma}_{j}$ 
such that $\widetilde{\Sigma}_{0}^{3} \cap \widetilde{\Sigma}_{1}^{1} 
= \overline{(1,1+i)}$ 
and $\widetilde{\Sigma}_{0}^{1} \cap \widetilde{\Sigma}_{1}^{3} 
= \overline{(i,1+i)}$. 
Figure~\ref{fig:u3ps-lift} shows 
the boundary of these planes in $\widehat{\mathbb{C}}$. 
The planes $\widetilde{\Sigma}_{j}^{k}$ for $j=0,1$ and $k=0,1,2,3$ 
bound a regular ideal octahedron. 
The other components of 
$\widetilde{\Sigma}_{0}$ and $\widetilde{\Sigma}_{1}$ 
are disjoint from this regular ideal octahedron. 
An isometry of a regular ideal octahedron has a fixed point 
in the interior. 
Consequently, 
if $M$ is decomposed along $\Sigma_{0}$ and $\Sigma_{1}$, 
one of the components after the decomposition is 
a regular ideal octahedron. 
Now the surface area of a regular ideal octahedron is equal to $8\pi$, 
and the area of a 3-punctured sphere is equal to $2\pi$. 
Therefore there are no other components after the decomposition. 

\fig[width=6cm]{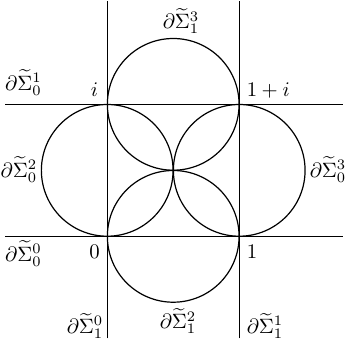}{Boundary of components of $\widetilde{\Sigma}_{0}$ and $\widetilde{\Sigma}_{1}$}

According to the cusped octahedral census 
by Goerner~\cite[Table 2]{goerner2017census}, 
there are two orientable hyperbolic 3-manifolds 
and six non-orientable hyperbolic 3-manifolds 
obtained from one regular ideal octahedron. 
Agol~\cite{agol2010minimal} showed that 
the Whitehead link complement and the $(-2,3,8)$-pretzel link complement 
have the smallest volume $V_{oct}$ 
of the orientable hyperbolic 3-manifold with two cusps, 
and described the ways 
to glue a regular ideal octahedron 
to obtain the two manifolds. 
Assume that $M$ is orientable. 
Then $M$ is one of the above two manifolds. 
The 3-punctured spheres $\Sigma_{0}$ and $\Sigma_{1}$ 
are the images of the faces of a regular ideal octahedron. 
The gluing ways, however, do not give two 3-punctured spheres 
from the faces of a regular ideal octahedron. 
\end{proof}

Therefore 
the intersection of two 3-punctured spheres 
in an orientable hyperbolic 3-manifold
cannot consist of three geodesics. 
We orient the hyperbolic 3-manifold and the 3-punctured spheres. 
We can easily show the following lemma 
by considering the orientation of a neighborhood of the arc. 

\begin{lem}
\label{lem:parity}
Let $S$ and $T$ be properly embedded oriented surfaces 
in an oriented 3-manifold with boundary. 
Suppose that $S$ and $T$ intersect transversally. 
Let an arc $g$ be a component of $S \cap T$. 
Let $x_{0}$ and $x_{1}$ denote the endpoints of $g$. 
For $i=0$ and $1$, 
let $s_{i}$ denote the boundary components of $S$ containing $x_{i}$, 
which possibly coincide. 
In the same manner, 
let $t_{i}$ 
denote the boundary components of $T$ containing $x_{i}$. 
Then 
the intersections of $s_{i}$ and $t_{i}$ at $x_{i}$ 
have opposite signs with respect to the induced orientation. 
\end{lem}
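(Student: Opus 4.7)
The plan is to track a continuously varying frame along the arc $g$ and apply the boundary orientation convention three times. I orient $g$ from $x_0$ to $x_1$ with tangent vector $w$. Along $g$, I choose continuous unit vectors $\nu_S$, tangent to $S$ and normal to $g$, and $\nu_T$, tangent to $T$ and normal to $g$, such that $(w,\nu_S)$ is a positive frame for $S$ and $(w,\nu_T)$ is a positive frame for $T$. Because $S$ and $T$ meet transversely, $(w,\nu_S,\nu_T)$ is a basis of $TM$ at every point of $g$; as this frame varies continuously and $g$ is connected, its orientation class $\sigma\in\{+1,-1\}$ relative to the orientation of $M$ is constant along $g$.

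Next I analyze the endpoints. The arc $g$ is properly embedded and transverse to $\partial M$, so $w(x_0)$ points into $M$ while $w(x_1)$ points out of $M$; hence the outward unit normals to $\partial M$ are proportional to $n_0=-w(x_0)$ and $n_1=+w(x_1)$. Inside the oriented surface $S$, the arc $g$ leaves the boundary component $s_0$ at $x_0$ and returns to $s_1$ at $x_1$, so $w(x_0)$ is an inward and $w(x_1)$ an outward normal to $\partial S$ within $S$. Applying the convention that $(\text{outward normal},\text{boundary tangent})$ is a positive frame of $S$, together with positivity of $(w,\nu_S)$, forces the induced orientation tangent vectors $\tau_0^S=-\nu_S(x_0)$ on $s_0$ and $\tau_1^S=+\nu_S(x_1)$ on $s_1$. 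The analogous reasoning in $T$ gives $\tau_0^T=-\nu_T(x_0)$ and $\tau_1^T=+\nu_T(x_1)$.

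Finally I translate the intersection sign into $M$. The intersection number of $s_i$ and $t_i$ at $x_i$ with respect to the induced orientation of $\partial M$ equals the sign of $(n_i,\tau_i^S,\tau_i^T)$ in $M$. At $x_0$ this triple is $(-w(x_0),-\nu_S(x_0),-\nu_T(x_0))$, whose orientation class is $-\sigma$; at $x_1$ it is $(+w(x_1),+\nu_S(x_1),+\nu_T(x_1))$, whose class is $+\sigma$. Three simultaneous sign flips produce a net sign change, so the two intersection signs are opposite, which is the claim.

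The main obstacle is bookkeeping: four distinct orientation conventions are in play at once (the ambient orientation of $M$, the induced orientation on $\partial M$, the induced orientations on $\partial S$ and $\partial T$, and the normal frame $(\nu_S,\nu_T)$ along $g$), and a single sign slip anywhere would invalidate the conclusion. Once one fixes a convention and verifies that an odd number of factors flip between $x_0$ and $x_1$, the lemma follows; the geometric content is precisely that all three of the relevant in/out dichotomies reverse between the two endpoints of $g$.
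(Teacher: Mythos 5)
Your argument is correct and is exactly the argument the paper leaves to the reader (the paper offers only the one-line remark that the lemma follows ``by considering the orientation of a neighborhood of the arc''): you transport the frame $(w,\nu_{S},\nu_{T})$ along the connected arc $g$, where its orientation class is constant, and observe that all three in/out dichotomies reverse between $x_{0}$ and $x_{1}$, producing an odd number of sign flips. The bookkeeping of the boundary-orientation conventions is handled consistently, so there is nothing to correct.
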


We return the cases of two 3-punctured spheres 
in an oriented hyperbolic 3-manifold. 
Note that the boundary components of a 3-punctured sphere 
are closed geodesics in a cusp with respect to its Euclidean metric. 

\begin{lem}
\label{lem:ss}
The intersection of two 3-punctured spheres 
contain no (S,S)-intersection. 

\end{lem}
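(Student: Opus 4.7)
The plan is to derive a contradiction from Lemma~\ref{lem:parity} using the Euclidean geometry of a cusp torus. I would suppose for contradiction that two totally geodesic 3-punctured spheres $\Sigma_{0}$ and $\Sigma_{1}$ share an intersection component $g$ that is separating in both. A preliminary observation is that $g$ must be an arc rather than a closed curve, since a 3-punctured sphere carries no essential simple closed geodesic: every essential simple closed curve on a pair of pants is peripheral, and peripheral curves are not realized by closed geodesics in the cusped hyperbolic metric.

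The first main step is topological: I would translate the (S,S) hypothesis into a statement about the endpoints of $g$. A simple arc on a pair of pants is separating if and only if its two endpoints lie on a single boundary circle, which follows from a quick Euler-characteristic count—cutting along an arc between distinct boundary components yields an annulus, while cutting along an arc whose endpoints share a boundary component yields two annuli. Consequently both endpoints $x_{0}$ and $x_{1}$ of $g$ lie on a single boundary circle $s$ of $\Sigma_{0}$ and on a single boundary circle $t$ of $\Sigma_{1}$, so $s$ and $t$ are two simple closed geodesics in a common Euclidean cusp torus of $M$ meeting at both $x_{0}$ and $x_{1}$.

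The second main step is geometric and short: any two simple closed geodesics on a flat torus lift to straight lines in the universal cover $\mathbb{R}^{2}$, so their transverse intersections all carry the same local sign. Applied to $s$ and $t$, this forces the signs of $s \cap t$ at $x_{0}$ and $x_{1}$ to agree. On the other hand, Lemma~\ref{lem:parity}, applied with $S = \Sigma_{0}$, $T = \Sigma_{1}$, and the arc $g$, forces those two signs to be opposite. This is the desired contradiction.

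I do not expect a serious obstacle here. The topological identification of separating arcs on a pair of pants with arcs whose endpoints share a boundary component is elementary, and once it is in hand the conclusion follows by combining Lemma~\ref{lem:parity} with the uniformity of intersection signs for closed geodesics on a flat torus.
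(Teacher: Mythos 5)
Your proof is correct and follows essentially the same route as the paper: both endpoints of a doubly-separating arc land on a single boundary geodesic of each surface, so the two boundary loops are closed geodesics in one Euclidean cusp torus and meet with equal signs, contradicting Lemma~\ref{lem:parity}. The only difference is that you spell out the elementary facts (separating arcs on a pair of pants have both endpoints on one boundary circle; geodesics on a flat torus intersect with constant sign) that the paper leaves implicit.
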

\begin{proof}
Assume that a geodesic $g$ in the intersection is 
separating in both 3-punctured spheres. 
Note that there might be another component of the intersection. 
We consider a cusp containing the endpoints of $g$. 
Let $s$ and $t$ denote 
the boundary components of the two 3-punctured spheres 
that intersect $g$. 
Then the intersection of the loops $s$ and $t$ 
contains at least the endpoints of $g$. 
Since the loops $s$ and $t$ are contained in a common cusp, 
the signs of the intersection at these endpoints coincide. 
This is impossible by Lemma~\ref{lem:parity}. 
\end{proof}

We suppose that the intersection of two 3-punctured spheres 
consists of two geodesics. 

\begin{lem}
\label{lem:nn-ns}
The intersection of two 3-punctured spheres does not consist of 
one (N,N)-intersection and one (S,N)-intersection. 
\end{lem}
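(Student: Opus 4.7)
The plan is to proceed by contradiction: assume $\Sigma_0 \cap \Sigma_1$ consists of exactly one arc $g_1$ of type $(N,N)$ and one arc $g_2$ of type $(S,N)$, with $g_2$ separating in $\Sigma_0$ (both endpoints on a boundary component $a$) and non-separating in $\Sigma_1$ (endpoints on distinct boundary components $b, b'$). Then $a, b, b'$ all lie in a single cusp torus $C$ of $M$. I will derive a contradiction by counting transverse intersections of $\partial\Sigma_0$ with $\partial\Sigma_1$ on $C$. Let $m$ and $n$ denote the numbers of boundary components of $\Sigma_0$ and $\Sigma_1$ lying in $C$, so $1 \leq m \leq 3$ and $2 \leq n \leq 3$. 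Because $\Sigma_0$ and $\Sigma_1$ are embedded, their boundary loops in $C$ are pairwise disjoint essential simple closed curves on the torus, hence mutually parallel; let $\alpha, \beta \in H_1(C)$ be the corresponding primitive slope classes. Since $g_2$ witnesses $a \cap b \neq \emptyset$, we have $\alpha \neq \pm\beta$, so $|\alpha\cdot\beta| \geq 1$. The standard count on a torus then shows the number of transverse intersection points of $\partial\Sigma_0 \cap C$ with $\partial\Sigma_1 \cap C$ equals $mn|\alpha\cdot\beta|$, and each such point is an endpoint of exactly one arc of $\Sigma_0 \cap \Sigma_1$. Writing $e \in \{0, 1, 2\}$ for the number of endpoints of $g_1$ lying in $C$, this yields the key constraint $mn|\alpha\cdot\beta| = 2 + e$.

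I now check the three cases. If $e = 0$, the endpoints of $g_1$ lie on two distinct boundary components of $\Sigma_1$ outside $C$; but $\Sigma_1$ has only $3 - n \leq 1$ such components, a contradiction. If $e = 1$, then $mn|\alpha\cdot\beta| = 3$ forces $(m, n, |\alpha\cdot\beta|) = (1, 3, 1)$, but $n = 3$ means every boundary component of $\Sigma_1$ lies in $C$, so both endpoints of $g_1$ lie in $C$ and $e = 2$, a contradiction. If $e = 2$, then $m \geq 2$ (the two endpoints of $g_1$ in $\Sigma_0$ lie on distinct boundary components in $C$), and the only feasible solution to $mn|\alpha\cdot\beta| = 4$ is $(m, n, |\alpha\cdot\beta|) = (2, 2, 1)$; here $\Sigma_1$'s boundaries in $C$ are exactly $b$ and $b'$, so the endpoints of $g_1$ in $\Sigma_1$ lie on $b$ and $b'$, making $g_1$ a simple geodesic arc in $\Sigma_1$ from $b$ to $b'$ disjoint from $g_2$. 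But a hyperbolic 3-punctured sphere has a unique simple geodesic arc between any two distinct cusps (one of the three non-separating arcs of Figure~\ref{fig:u3ps-geodesic}), so $g_1 = g_2$, the final contradiction.

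The main delicate point is justifying $mn|\alpha\cdot\beta|$ as the correct transverse intersection count on $C$; this rests on the fact that all boundary loops of an embedded totally geodesic 3-punctured sphere lying in a given cusp must share a common slope (as disjoint essential simple closed curves on a torus). Once this bookkeeping is in place, the remainder is a short finite case check together with the uniqueness of simple geodesic arcs in the 3-punctured sphere.
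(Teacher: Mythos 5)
Your proof is correct and follows essentially the same route as the paper's: both arguments count the endpoints of the intersection arcs lying on a single cusp torus and exploit the fact that the disjoint boundary loops of each embedded 3-punctured sphere there are parallel Euclidean geodesics, so that the resulting count $mn|\alpha\cdot\beta|$ cannot match the number of available arc endpoints. Your bookkeeping via $e$ is a more systematic version of the paper's two figure-based configurations, and your final sub-case $(m,n,|\alpha\cdot\beta|)=(2,2,1)$ is validly settled by the uniqueness of the simple geodesic joining two punctures (a direct count of the three arc endpoints forced onto the single loop $a$, which can meet $\partial\Sigma_1\cap C$ in only two points, would also do).
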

\begin{proof}
Assume that two 3-punctured spheres intersect 
at one (N,N)-intersection and one (S,N)-intersection. 
There are two possibilities of the intersection 
as shown in the left and center of Figure~\ref{fig:u3ps-beta}. 

In the left case, 
a cusp contains three loops $s,u$, and $v$ of boundary components 
of the 3-punctured spheres. 
The loops $u$ and $v$ are disjoint. 
The loops $s$ and $u$ intersect at two points, 
but the loops $s$ and $v$ intersect at one point. 
This is impossible. 

\fig[width=12cm]{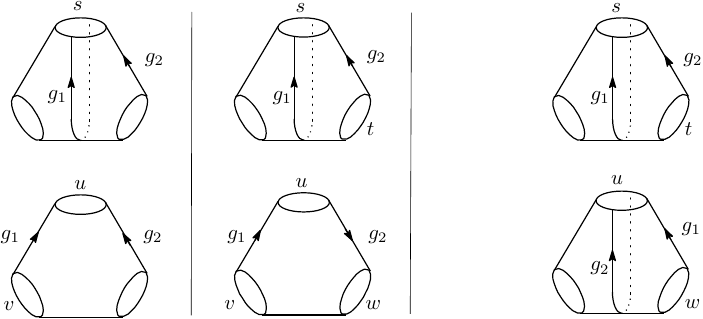}{Orientations of two geodesics containing an (S,N)-intersection}

In the central case, 
a cusp contains five loops $s,t,u,v$, and $w$ of boundary components 
of the 3-punctured spheres. 
The two loops $s$ and $t$ are disjoint, 
and the three loops $u,v$, and $w$ are mutually disjoint, 
Hence the number of intersection of these five loops is a multiple of six. 
This contradicts the fact that the intersection consists of 
the four endpoints of the two geodesics $g_{1}$ and $g_{2}$. 
\end{proof}

\begin{lem}
\label{lem:ns-ns}
The intersection of two 3-punctured spheres 
does not consist of two (S,N)-intersections. 
\end{lem}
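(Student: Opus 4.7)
The plan is to split into two cases by which 3-punctured sphere each of $g_1, g_2$ is separating in. In \textbf{Case A}, both $g_1, g_2$ are separating in the same surface (say $\Sigma_0$) and non-separating in the other. In \textbf{Case B}, $g_1$ is separating in $\Sigma_0$ while $g_2$ is separating in $\Sigma_1$. I would handle Case A topologically: there are only three simple separating arcs in a 3-punctured sphere (one per cusp, each separating the other two), and each isotopy class has a unique totally geodesic representative, so two disjoint separating geodesics must sit at distinct cusps. Cutting $\Sigma_0$ along one of them produces two annuli; the second arc, having its endpoints at a different cusp, is then forced into the annulus containing that cusp, and re-gluing shows that the wrong pair of cusps ends up in the same component of $\Sigma_0$ minus the second arc, contradicting its separating behavior.

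For Case B, after possibly swapping $\Sigma_0$ and $\Sigma_1$ I will assume $g_1$ has both endpoints on a boundary $a_0$ of $\Sigma_0$ and $g_2$ has both endpoints on a boundary $A_1$ of $\Sigma_1$. Disjointness and the non-separating behavior of $g_2$ in $\Sigma_0$ confine $g_2$ to an annular piece of $\Sigma_0 \setminus g_1$ and force it to connect $a_0$ to a second boundary $b_0$; symmetrically $g_1$ connects $A_1$ to a second boundary $B_1$ of $\Sigma_1$. Tracking the four endpoints through their cusps shows that $a_0, b_0, A_1, B_1$ all lie in a single cusp $T$. On the Euclidean torus $T$, the disjoint essential simple closed geodesics $a_0, b_0$ (respectively $A_1, B_1$) must be parallel, yielding $[a_0] = \pm[b_0]$ and $[A_1] = \pm[B_1]$ in $H_1(T)$.

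Since each of these four classes is primitive and any two closed geodesics on a flat torus realize their algebraic intersection number geometrically, the count $|X \cap Y|$ is the same for every $X \in \{a_0, b_0\}$ and $Y \in \{A_1, B_1\}$. Counting the endpoints in $T$ gives $|a_0 \cap A_1| = 2$, $|a_0 \cap B_1| = |b_0 \cap A_1| = 1$, and $|b_0 \cap B_1| = 0$, which violates the common-value conclusion and yields the contradiction. The hard part will be the cusp bookkeeping needed to force all four endpoints into one cusp $T$; once that is done, the flat-torus geometry of disjoint closed geodesics together with the mismatched intersection counts finish the argument.
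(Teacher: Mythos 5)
Your proof is correct and follows essentially the same route as the paper: after reducing to the configuration where each geodesic is separating in a different 3-punctured sphere, both arguments place the four relevant boundary loops in a single cusp and derive a contradiction from the fact that disjoint closed Euclidean geodesics on the cusp torus are parallel, which is incompatible with the intersection pattern $2,1,1,0$. The only difference is cosmetic: you treat explicitly the (vacuous) case where both geodesics are separating in the same surface, and you phrase the final contradiction via intersection numbers rather than via transitivity of parallelism as the paper does.
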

\begin{proof}
Assume that two 3-punctured spheres intersect 
at two (S,N)-intersections. 
It is sufficient to consider the intersection 
as shown in the right of Figure~\ref{fig:u3ps-beta}. 

In the case, 
a cusp contains four loops $s,t,u$, and $w$ of the boundary components 
of the 3-punctured spheres. 
Then $s \cap t = \emptyset, u \cap w = \emptyset, t \cap w = \emptyset$, 
whereas $s \cap u \neq \emptyset$. 
This is impossible.
\end{proof}

\begin{proof}[Proof of Proposition~\ref{prop:intersection}]
We have excluded the cases other than 
the cases in Proposition~\ref{prop:intersection}. 
\end{proof}

\subsection{The unions with the type (iii)-intersection}
\label{subsection:type3}

In this subsection, we classify the unions of 3-punctured spheres 
that contain the type (iii)-intersection 
shown in Proposition~\ref{prop:intersection}. 

\begin{lem}
\label{lem:type3}
If two 3-punctured spheres have the type (iii)-intersection, 
the ambient hyperbolic 3-manifold is obtained 
by a (possibly empty) Dehn filling 
from one of the following manifolds (Figure~\ref{fig:u3ps-type3}): 
\begin{itemize}
\item the manifold $\mathbb{W}_{2}$, 
which is a double covering of the Whitehead link complement, 

\item the Borromean rings complement $\mathbb{W}^{\prime}_{2}$, or 

\item the minimally twisted hyperbolic 4-chain link complement 
$\mathbb{M}_{4}$. 
\end{itemize}
\end{lem}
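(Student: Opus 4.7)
The strategy follows the template of Lemma~\ref{lem:3int}: lift the configuration to the upper half-space model of $\mathbb{H}^{3}$, normalize coordinates using the type (iii) data, identify the ideal polyhedron cut out by the relevant lifts, and recognize $M$ as a Dehn filling of one of the three candidates by recognizing that polyhedron.

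First I would observe that in a 3-punctured sphere the three non-separating simple geodesics are the three arcs between pairs of cusps, so any two disjoint such geodesics share a common cusp. Hence in each of $\Sigma_{0}$ and $\Sigma_{1}$ the two intersection arcs $g_{1}, g_{2}$ share a cusp, and these two cusps lie in a single torus cusp of $M$. I then lift: place $\widetilde{\Sigma}_{0}^{0}$ as the vertical plane over $\widehat{\mathbb{R}}$ with cusps of $\Sigma_{0}$ at $0, 1, \infty$, the shared cusp of $g_{1}, g_{2}$ lifted to $\infty$, so that $g_{1}, g_{2}$ become $\overline{(0, \infty)}$ and $\overline{(1, \infty)}$. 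The two lifts $\widetilde{\Sigma}_{1}^{0}, \widetilde{\Sigma}_{1}^{1}$ of $\Sigma_{1}$ meeting $\widetilde{\Sigma}_{0}^{0}$ along these arcs are then vertical half-planes over Euclidean lines through $0$ and through $1$ in $\mathbb{C}$; embeddedness of $\Sigma_{1}$ forces the two lines to be parallel (and distinct from $\widehat{\mathbb{R}}$), say with common direction $e^{i\theta}$ for some $\theta \in (0, \pi)$. Each lift carries the hyperbolic structure of $\Sigma_{1}$, with two cusps at the endpoints of its intersection arc and a third cusp $P_{0}$ or $P_{1}$ on the respective line.

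The core step is to pin down $\theta$ and $P_{0}, P_{1}$. Since $\widetilde{\Sigma}_{1}^{0}$ and $\widetilde{\Sigma}_{1}^{1}$ are in one $\pi_{1}(M)$-orbit, some deck transformation $\phi$ sends one to the other; because $\infty$ is a torus cusp, $\phi$ must be parabolic fixing $\infty$ (or parabolic whose action relates the two planes through a different cusp). I would enumerate the possible permutations by which $\phi$ sends $\{0, \infty, P_{0}\}$ to $\{1, \infty, P_{1}\}$, and combine these algebraic constraints with the requirement that further lifts of $\Sigma_{0}$ meeting $\widetilde{\Sigma}_{1}^{0}$ do so along non-separating geodesics that are pairwise disjoint from other lifts. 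These conditions are expected to collapse the a priori continuous family of slopes to finitely many discrete configurations, matching exactly the three manifolds in the statement: a single regular ideal octahedron for $\mathbb{W}_{2}$, a pair of regular ideal octahedra for $\mathbb{W}^{\prime}_{2}$, and a configuration of eight ideal tetrahedra for $\mathbb{M}_{4}$.

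In each of the three cases the planes $\widetilde{\Sigma}_{0}^{0}, \widetilde{\Sigma}_{1}^{0}, \widetilde{\Sigma}_{1}^{1}$ together with a small number of further translates under $\pi_{1}(M)$ bound the canonical ideal polyhedron of that manifold, so that the regular neighborhood $N(\Sigma_{0} \cup \Sigma_{1})$ augmented by the adjacent cusp neighborhoods is realized as a submanifold of $M$ homeomorphic to one of $\mathbb{W}_{2}, \mathbb{W}^{\prime}_{2}, \mathbb{M}_{4}$ with some cusps possibly removed. Since each of these three candidates is atoroidal and hyperbolic, the complement in $M$ must consist of solid tori, so $M$ is a (possibly empty) Dehn filling of the candidate, as required. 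The main obstacle is the enumeration: showing that the one-parameter family of slopes $\theta$ is in fact rigid and yields only the three listed configurations will require a careful bookkeeping of how the parabolic subgroup at $\infty$ interacts with the 3-punctured-sphere structures carried by $\widetilde{\Sigma}_{1}^{0}$ and $\widetilde{\Sigma}_{1}^{1}$, together with the embeddedness of all further lifts of $\Sigma_{0}$ and $\Sigma_{1}$.
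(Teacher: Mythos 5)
Your plan rests on a rigidity claim that is false, so the core step cannot be carried out as described. You hope that the embeddedness of the lifts will ``collapse the a priori continuous family of slopes $\theta$ to finitely many discrete configurations,'' after which the lifts cut out a specific ideal polyhedron (a regular ideal octahedron, etc.). But a type (iii) configuration is not metrically rigid: for every hyperbolic Dehn filling of $\mathbb{W}_{2}$ the two surviving $3$-punctured spheres of type $\Whih_{2}$ still meet in two (N,N)-geodesics, and the dihedral angle along the intersection varies continuously with the filling slope (this is exactly the phenomenon quantified in Section~\ref{section:parameter} for the analogous modulus $\tau$). The rigidity in Lemma~\ref{lem:3int} comes precisely from the \emph{third} intersection geodesic: the hemisphere over a circle through $0$ and $1$ can be disjoint from the two parallel vertical planes only if it is tangent to both, which forces orthogonality and then $a=1$. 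With only two intersection geodesics that constraint is absent, $\theta$ is a genuine free parameter, and the lifts of $\Sigma_{0}\cup\Sigma_{1}$ do not bound a finite ideal polyhedron (the complementary region contains the part of $M$ outside a neighborhood of the union, e.g.\ the filled solid torus). So the conclusion can only be ``$M$ is a Dehn filling of one of three manifolds,'' never ``$M$ is one of three manifolds,'' and no argument that tries to pin down the metric can succeed.

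The paper's proof is instead combinatorial. It enumerates the finitely many ways the four ends of the two intersection geodesics can be distributed among the punctures and matched in cusps of $M$ (three cases, each with a sign subcase governed by Lemma~\ref{lem:parity}), and for each admissible case identifies the regular neighborhood of $\Sigma_{0}\cup\Sigma_{1}$ together with the adjacent torus cusps as a compact manifold with a single torus boundary component, namely $\mathbb{W}_{2}$, $\mathbb{W}^{\prime}_{2}$, $\mathbb{M}_{4}$, or the non-hyperbolic chain-link complement $\mathbb{M}^{\prime}_{4}$ with one cusp left unfilled. The last is ruled out because it would force $M$ to be a graph manifold, and one remaining configuration is ruled out by counting intersection points of the boundary loops on a common cusp torus. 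Your closing step (the neighborhood has torus frontier, so atoroidality of $M$ gives a possibly empty Dehn filling) is correct and matches the paper, but the identification of the neighborhood must be done topologically via this case analysis, not by recognizing a rigid polyhedral decomposition. You also do not address the case analysis at all: your single normalization implicitly fixes one way the ends of $g_{1},g_{2}$ pair up at $\infty$, whereas the distinction between the three target manifolds (and the two excluded configurations) lives exactly in that choice.
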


\fig[width=12cm]{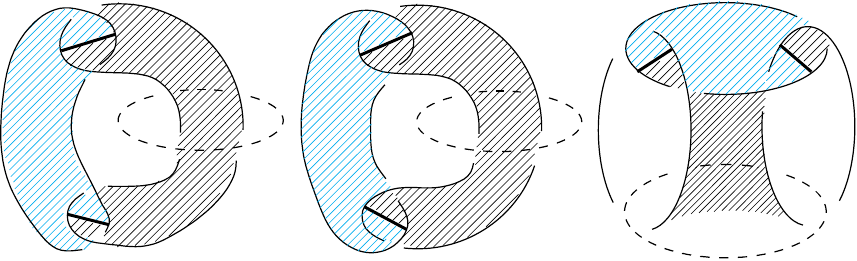}{Two 3-punctured spheres intersecting at two geodesics}

Note that each of the manifolds 
$\mathbb{W}_{2}, \mathbb{W}^{\prime}_{2}$, and $\mathbb{M}_{4}$ 
is obtained by gluing two regular ideal octahedra, 
and hence they have a common volume. 

\begin{proof}
There are three possibilities depending on the orientations 
of intersectional geodesics as shown in Figure~\ref{fig:u3ps-2alpha}. 

\fig[width=11cm]{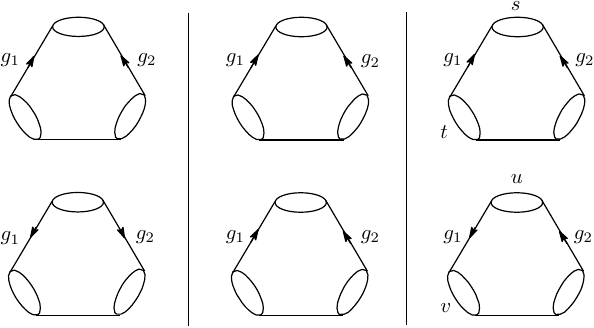}{Orientations of two (N,N)-intersections}

The left case gives the union of two 3-punctured spheres 
of the types $\Whih_{2}$ and $\Whiph_{2}$ 
depending on the signs of the intersections on the boundary. 
The union of the two 3-punctured spheres with the adjacent cusps 
has a regular neighbourhood whose boundary is a torus. 
Hence the ambient 3-manifold is obtained 
by a (possibly empty) Dehn filling 
from $\mathbb{W}_{2}$ or $\mathbb{W}^{\prime}_{2}$.

\fig[width=4cm]{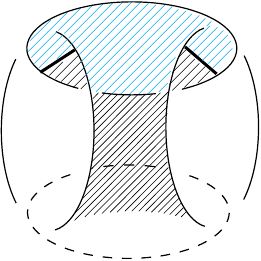}{The non-hyperbolic minimally twisted 4-chain link}

The central case also gives two types of the union 
depending on the signs of the intersections on the boundary. 
One of these is the type $\Teth$. 
Then the ambient 3-manifold is obtained 
by a (possibly empty) Dehn filling from $\mathbb{M}_{4}$ 
in the same manner as above. 
The other type occurs in a manifold 
obtained by a (possibly empty) Dehn filling 
from the non-hyperbolic minimally twisted 4-chain link complement 
$\mathbb{M}^{\prime}_{4}$ 
as shown in Figure~\ref{fig:u3ps-nonhyp}. 
The manifold $\mathbb{M}^{\prime}_{4}$ 
can be decomposed along a torus 
into two copies of $\Sigma \times S^{1}$, 
where $\Sigma$ is a 3-punctured sphere. 
Hence the ambient 3-manifold is a graph manifold, 
which is not hyperbolic. 

The right case does not occur. 
Assume that it occurs. 
Then a cusp contains four boundary components $s,t,u$, and $v$ 
of the 3-punctured spheres. 
We have $s \cap t = \emptyset$ and $u \cap v = \emptyset$, 
but the intersection in $s,t,u$, and $v$ 
consists of three points. 
This is impossible. 
\end{proof}

In Lemmas~\ref{lem:bor6}--\ref{lem:tet2}, 
we will show that 
if two 3-punctured spheres have the type (iii)-intersection 
shown in Proposition~\ref{prop:intersection}, 
then the union of all the 3-punctured spheres is 
$\Whi_{4}, \Whih_{2}, \Bor, \Whiph_{2}, \Tet, \Teth$, or $\Mag$. 
We need Corollary~\ref{cor:disjoint} and Lemma~\ref{lem:homologous} 
to prove that there are no other 3-punctured spheres than the described ones 
in certain manifolds. 

\begin{thm}[Miyamoto~\cite{miyamoto1994volumes}]
\label{thm:estimate}
Let $N$ be a hyperbolic 3-manifold with totally geodesic boundary. 
Then 
\[
\mathrm{vol} (N) \geq \frac{1}{2} |\chi (\partial N)| V_{oct}, 
\]
where $\chi$ indicates the Euler characteristic, and 
$V_{oct} = 3.6638...$ is the volume of a regular ideal octahedron. 
\end{thm}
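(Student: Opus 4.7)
The plan is to decompose $N$ via nearest-point projection to $\partial N$ and bound the volume of each piece from below by a fixed constant. First, I would choose an ideal triangulation of the totally geodesic boundary $\partial N$. Since each component is a complete finite-area hyperbolic surface with negative Euler characteristic, such a triangulation exists, and it consists of exactly $-2\chi(\partial N) = 2|\chi(\partial N)|$ ideal triangles.

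Next, let $\pi\colon N\setminus\partial N \to \partial N$ be the nearest-point projection, which is well defined away from a measure-zero cut locus. For each ideal triangle $\Delta$ of the triangulation, set $W(\Delta) = \overline{\pi^{-1}(\Delta)}$. These regions partition $N$ up to measure zero, so
\[
\vol(N) = \sum_\Delta \vol(W(\Delta)).
\]
The key local estimate is $\vol(W(\Delta)) \geq V_{oct}/4$ for every ideal triangle $\Delta$; summing over the $2|\chi(\partial N)|$ triangles then yields $\vol(N) \geq \tfrac{1}{2}|\chi(\partial N)|V_{oct}$ as required.

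The hard part is proving this local lower bound. I would lift $\Delta$ to an ideal triangle $\widetilde{\Delta}$ on a totally geodesic plane $P \subset \mathbb{H}^3$ and analyze the set of points in the universal cover whose nearest lift of $\partial N$ is $\widetilde{\Delta}$. This set is bounded by $\widetilde{\Delta}$ itself, by three totally geodesic half-planes meeting $P$ orthogonally along the sides of $\widetilde{\Delta}$, and from above by equidistant surfaces to other lifts of $\partial N$. The claim is that among all admissible configurations the volume is minimized when those equidistant ceilings degenerate to four further totally geodesic planes, cutting out precisely a quarter of a right-angled regular ideal octahedron. Establishing this sharp minimum demands an explicit volume computation in the upper half-space model together with a monotonicity argument showing that pushing the ceiling outward only increases the enclosed volume. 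This geometric minimization is the technical heart of Miyamoto's argument and the main obstacle.

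Sharpness of the bound is then illustrated by the manifold $\mathbb{B}_1$ described in Section~\ref{section:description}: its totally geodesic boundary consists of two $3$-punctured spheres, so $|\chi(\partial \mathbb{B}_1)| = 2$, and it decomposes into a single regular ideal octahedron, giving $\vol(\mathbb{B}_1) = V_{oct} = \tfrac{1}{2}\cdot 2\cdot V_{oct}$ and equality throughout the argument above.
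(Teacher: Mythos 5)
The paper does not prove this statement at all: it is quoted verbatim from Miyamoto~\cite{miyamoto1994volumes} and used as a black box (its only role here is Corollary~\ref{cor:disjoint}). So the only fair comparison is between your sketch and Miyamoto's actual argument, and against that standard your proposal has a genuine gap. The entire content of the theorem is the inequality $\vol(W(\Delta)) \geq V_{oct}/4$, and you do not prove it --- you explicitly defer it as ``the technical heart'' and ``the main obstacle.'' Reducing the theorem to an unproved sharp extremal problem of essentially the same difficulty is not a proof. Moreover, the reduction itself is not obviously sound. Miyamoto does not bound the raw volume of each Dirichlet cell; he bounds a \emph{density} (the ratio of the volume of the embedded boundary collar of width $\ell/2$, where $\ell$ is the shortest orthogeodesic, to the volume of the ambient cell) via a B\"or\"oczky-type packing lemma for hyperball/horoball configurations, and then optimizes the resulting function of $\ell$. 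Your per-triangle volume bound is a strictly stronger local statement: the cells $W(\Delta)$ depend on an arbitrary choice of ideal triangulation, a single cell can be squeezed by the foot of a short orthogeodesic landing inside $\Delta$, and you give no argument that the loss there is compensated within the same cell rather than in a neighboring one. You would need to either prove this stronger local claim or pass to the density formulation; as written the decomposition buys you nothing.

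Two further points. First, your argument begins by choosing an ideal triangulation of $\partial N$ into $2|\chi(\partial N)|$ ideal triangles, which requires every component of $\partial N$ to be non-compact; the theorem as stated allows closed totally geodesic boundary components (for which Miyamoto's compact-case bound, via regular truncated tetrahedra, is in fact stronger than $V_{oct}/2$ per unit $|\chi|$). Second, your description of the alleged minimizer is internally inconsistent: a quarter of a regular ideal octahedron is an ideal tetrahedron (dihedral angles $\pi/2,\pi/4,\pi/4$) with four faces, whereas the region you describe --- floor, three orthogonal walls, and four planar ceilings --- has eight. Getting the extremal configuration right is not cosmetic, since the sharpness analysis (which the application in Lemma~\ref{lem:bor6} through Lemma~\ref{lem:tet8} actually relies on, via $\vol(\mathbb{M}_4)=2V_{oct}$) hinges on identifying it exactly.
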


\begin{cor}
\label{cor:disjoint}
A hyperbolic 3-manifold $M$ contains at most $\lfloor \mathrm{vol}(M) / V_{oct} \rfloor$ 
disjoint 3-punctured spheres. 
\end{cor}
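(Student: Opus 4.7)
The plan is to cut $M$ along the disjoint $3$-punctured spheres and then apply Miyamoto's estimate (Theorem~\ref{thm:estimate}) to the resulting manifold.

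More precisely, suppose $\Sigma_{1}, \dots, \Sigma_{k} \subset M$ are pairwise disjoint embedded totally geodesic $3$-punctured spheres. Since $M$ is orientable and each $\Sigma_{i}$ is an orientable surface, every $\Sigma_{i}$ is two-sided. Cutting $M$ along $\bigcup_{i=1}^{k} \Sigma_{i}$ therefore yields a (possibly disconnected) hyperbolic $3$-manifold $N$ whose boundary consists of $2k$ copies of $3$-punctured spheres, one pair for each $\Sigma_{i}$. Because each $\Sigma_{i}$ is totally geodesic, the boundary $\partial N$ is totally geodesic, so Theorem~\ref{thm:estimate} applies (componentwise, then summed).

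Next I would compute the Euler characteristic: each $3$-punctured sphere has $\chi = -1$, so $|\chi(\partial N)| = 2k$. Applying Theorem~\ref{thm:estimate} gives
\[
\mathrm{vol}(M) \;=\; \mathrm{vol}(N) \;\geq\; \tfrac{1}{2}\,|\chi(\partial N)|\,V_{oct} \;=\; k\,V_{oct},
\]
where the first equality is because cutting along a measure-zero subset preserves volume. Rearranging yields $k \leq \mathrm{vol}(M)/V_{oct}$, and since $k$ is an integer, $k \leq \lfloor \mathrm{vol}(M)/V_{oct} \rfloor$.

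There is no real obstacle here: the only points that require a small justification are two-sidedness of the $\Sigma_{i}$ (immediate from orientability of $M$ and of a $3$-punctured sphere) and the fact that Miyamoto's bound may be applied to a cut-open manifold with totally geodesic boundary that might be disconnected (one applies it to each component, and both sides of the inequality are additive over components). Everything else is a direct substitution into Theorem~\ref{thm:estimate}.
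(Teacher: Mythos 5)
Your proof is correct and follows essentially the same route as the paper: cut $M$ along the $k$ disjoint totally geodesic 3-punctured spheres to obtain a hyperbolic 3-manifold $N$ with totally geodesic boundary satisfying $|\chi(\partial N)| = 2k$, then apply Miyamoto's bound (Theorem~\ref{thm:estimate}) and use that volume is preserved under cutting. The paper's version is a two-line sketch; you have merely filled in the routine details (two-sidedness, additivity over components), which are all fine.
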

\begin{proof}
By cutting $n$ disjoint totally geodesic 3-manifolds, 
we obtain a hyperbolic 3-manifold $N$ with totally geodesic boundary 
such that $\chi (\partial N) = -2n$. 
Then we apply Theorem~\ref{thm:estimate} to $N$. 
\end{proof}

\begin{lem}
\label{lem:homologous}
Let $M$ be an orientable hyperbolic 3-manifold. 
Suppose that two distinct totally geodesic 3-punctured spheres 
$\Sigma$ and $\Sigma^{\prime}$ in $M$ 
represent a common homology class in $H_{2}(M, \partial M; \mathbb{Z})$. 
Then $\Sigma$ and $\Sigma^{\prime}$ are disjoint. 
\end{lem}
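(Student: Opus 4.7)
The plan is to argue by contradiction: assume $\Sigma \cap \Sigma' \neq \emptyset$ and derive an inconsistency with the homology equality $[\Sigma] = [\Sigma']$ in $H_{2}(M, \partial M; \mathbb{Z})$. By Proposition~\ref{prop:intersection} the intersection falls into one of three types (i)--(iii), which I would treat in turn after recording three structural facts. First, because $\Sigma$ is embedded and totally geodesic, any two of its boundary components lying on a common cusp torus $T$ are disjoint simple closed geodesics in the flat $T$, hence mutually parallel; so each represents $\pm$ a single primitive class in $H_{1}(T; \mathbb{Z})$, and the same holds for $\Sigma'$. Second, the homology hypothesis together with the boundary map $H_{2}(M, \partial M) \to H_{1}(\partial M) = \bigoplus_{T} H_{1}(T)$ yields $[\partial \Sigma \cap T] = [\partial \Sigma' \cap T]$ in $H_{1}(T)$ for every cusp $T$. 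Third, on any cusp $T$ where the slopes of $\Sigma$ and $\Sigma'$ differ, the number of intersections of $\partial \Sigma$ with $\partial \Sigma'$ on $T$ equals $k_{\Sigma} \cdot k_{\Sigma'} \cdot n$, where $k_{\Sigma}, k_{\Sigma'}$ count boundary components of $\Sigma, \Sigma'$ on $T$ and $n$ is the absolute value of the determinant of the primitive slope vectors; these intersection points are exactly the endpoints at $T$ of the arcs of $\Sigma \cap \Sigma'$.

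For Case (i) (a single (N,N)-arc $g$), if both endpoints of $g$ lay in the same cusp $T$ then $g$ being non-separating in each surface would force $k_{\Sigma}, k_{\Sigma'} \geq 2$ and hence at least $4$ endpoints at $T$, contradicting the $2$ available; so the endpoints lie in distinct cusps, and the counting identity at either forces $k_{\Sigma} = k_{\Sigma'} = 1$, so $[\partial \Sigma \cap T]$ and $[\partial \Sigma' \cap T]$ are distinct primitive classes with transverse directions, violating the homology equality. For Case (ii) (a single (S,N)-arc), both endpoints lie on the same boundary of $\Sigma$, hence in one cusp $T$; the counting identity then forces $k_{\Sigma} = 1$ and $k_{\Sigma'} = 2$ with $n = 1$, so $[\partial \Sigma \cap T]$ is a primitive class while $[\partial \Sigma' \cap T]$ is $0$ or $\pm 2$ times the primitive class transverse to it, and equality again fails.

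For Case (iii) (two (N,N)-arcs), Lemma~\ref{lem:type3} restricts the ambient manifold and the arc configuration to those in Figure~\ref{fig:u3ps-2alpha}. Applying the same counting identity cusp-by-cusp, whenever the four arc-endpoints spread across distinct cusps the argument of Case (i) applies at each endpoint cusp; when all four endpoints concentrate in a single cusp $T$, homology equality forces the two boundary components of each surface on $T$ to be antiparallel, and then the parity statement of Lemma~\ref{lem:parity}, applied to either arc, reverses twice and produces the required sign contradiction. The main obstacle is this last case: with four endpoints rather than two there is more freedom in the distribution across cusps and in the orientation signs of the boundary components, so several sub-configurations must be excluded. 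Nonetheless, no ingredient beyond the three structural facts above and Lemma~\ref{lem:parity} is needed to rule out each sub-configuration, and the verification is essentially systematic bookkeeping.
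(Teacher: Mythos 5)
Your proposal is correct, but it follows a genuinely different route from the paper's. The paper also reduces at once to showing $\partial \Sigma \cap \partial \Sigma^{\prime} = \emptyset$ and splits on the boundary homology: if no two components of $\partial \Sigma$ cancel in $H_{1}(\partial M;\mathbb{Z})$, the equality of classes matches each $s_{i}$ with a parallel $s^{\prime}_{i}$ on the same cusp torus and disjointness is immediate; in the cancelling case $[s_{0}]=-[s_{1}]$, any intersection forces a type~(iii) configuration with all four loops $s_{0},s_{1},s^{\prime}_{0},s^{\prime}_{1}$ in one cusp, which the paper then excludes by appealing to Lemma~\ref{lem:type3} (the octahedral classification of ambient manifolds). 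You instead organize the contradiction by the intersection types of Proposition~\ref{prop:intersection} and drive everything with the flat-torus counting identity $k_{\Sigma}k_{\Sigma^{\prime}}\lvert\det\rvert$ together with Lemma~\ref{lem:parity}; the one sub-case your counting does not kill outright (type~(iii), all four arc-endpoints in a single cusp with two antiparallel boundary components of each surface) is exactly the paper's hard case, and your parity argument does dispose of it: an (N,N) arc then joins $s_{i}\cap t_{j}$ to $s_{1-i}\cap t_{1-j}$, and antiparallelism makes the two endpoint signs equal, contradicting Lemma~\ref{lem:parity}. So your proof trades the paper's reliance on Lemma~\ref{lem:type3} for a purely local boundary-slope computation, which makes this lemma logically independent of the type-(iii) classification; the cost is the case analysis you defer to ``systematic bookkeeping,'' which you should actually write out (the surviving distributions of the four endpoints are few, and each is settled either by the homology count or by the parity sign), and you should state explicitly that you are using the decomposition $H_{1}(\partial M)=\bigoplus_{T}H_{1}(T)$ and that distinct geodesics of equal slope on a flat torus are disjoint, which is what lets the counting identity double as a disjointness criterion.
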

\begin{proof}
It is sufficient to show that 
$\partial \Sigma \cap \partial \Sigma^{\prime} = \emptyset$. 
For $i=0,1,2$, let $s_{i}$ and $s^{\prime}_{i}$ 
denote the components of $\partial \Sigma$ and $\partial \Sigma^{\prime}$. 
The unions of loops $\partial \Sigma = s_{0} \cup s_{1} \cup s_{2}$ and 
$\partial \Sigma^{\prime} 
= s^{\prime}_{0} \cup s^{\prime}_{1} \cup s^{\prime}_{2}$ 
represent a common homology class in $H_{1}(\partial M; \mathbb{Z})$. 

We first suppose the homology classes of any pair of $s_{0}, s_{1}$, and $s_{2}$ 
do not cancel in $H_{1}(\partial M; \mathbb{Z})$. 
By changing indices, we may assume that 
$[s_{i}] = [s^{\prime}_{i}] \in H_{1}(\partial M; \mathbb{Z})$. 
Since the loops $s_{i}$ and $s^{\prime}_{j}$ do not coincide, 
we have $\partial \Sigma \cap \partial \Sigma^{\prime} = \emptyset$. 

Otherwise we may assume that 
$[s_{0}] = -[s_{1}]$ and $[s^{\prime}_{0}] = -[s^{\prime}_{1}]$ 
in $H_{1}(\partial M; \mathbb{Z})$. 
If $s_{0} \cup s_{1}$ and $s^{\prime}_{0} \cup s^{\prime}_{1}$ intersect, 
then $\Sigma$ and $\Sigma^{\prime}$ have the type (iii)-intersection, 
and the loops $s_{0}, s_{1}, s^{\prime}_{0}$, and $s^{\prime}_{1}$ 
are contained in a common cusp. 
This is impossible by Lemma~\ref{lem:type3}. 
Therefore we have $\partial \Sigma \cap \partial \Sigma^{\prime} = \emptyset$. 
\end{proof}

In order to apply Corollary~\ref{cor:disjoint} and Lemma~\ref{lem:homologous}, 
we use the Thurston norm. 
For a surface $S = \bigsqcup_{i} S_{i}$ 
(each $S_{i}$ is a connected component), 
we define $\chi_{-}(S) = \sum_{i} \max \{0, -\chi (S_{i})\}$. 
For an orientable compact 3-manifold $M$, 
the Thurston norm of a class $\sigma \in H_{2}(M, \partial M; \mathbb{Z})$ 
is defined to be $\| \sigma \| = \min \chi_{-}(S)$, 
where the minimum is taken  
over the (possibly disconnected) embedded surfaces $S$ 
that represent the class $\sigma$. 
Thurston~\cite{thurston1986norm} showed that 
$\| \cdot \|$ is extended to a norm on $H_{2}(M, \partial M; \mathbb{R})$ 
for a hyperbolic 3-manifold, 
and the unit norm ball 
$\{\sigma \in H_{2}(M, \partial M; \mathbb{R}) | \| \sigma \| \leq 1\}$ 
is convex. 
Note that 
the norm $\| \sigma \|$ of an integer class $\sigma$ is odd 
if and only if 
$\sigma$ can be represented by an essential surface $S$ 
such that the number of the components of $\partial S$ is odd. 

\begin{lem}
\label{lem:bor6}
The Borromean rings complement $\mathbb{W}^{\prime}_{2}$ 
has exactly the six 3-punctured spheres of the type $\Bor$. 
\end{lem}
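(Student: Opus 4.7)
The plan is to combine Lemma~\ref{lem:homologous} and Corollary~\ref{cor:disjoint} with a Thurston norm calculation on $H_{2}(\mathbb{W}^{\prime}_{2}, \partial \mathbb{W}^{\prime}_{2}; \mathbb{Z}) \cong \mathbb{Z}^{3}$. First I would pair up the six listed type-$\Bor$ spheres according to the three planes of the Borromean configuration. In each plane $P$, the two 3-punctured spheres $D_{1}, D_{2}$ fill the 2-sphere $P \cup \{\infty\}$ (away from the links), and appropriately oriented they together cobound, modulo $\partial \mathbb{W}^{\prime}_{2}$, the 3-chain given by one of the two balls into which $P \cup \{\infty\}$ separates $S^{3}$ (with link arcs removed). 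Hence $[D_{1}] + [D_{2}] = 0$ in $H_{2}(\mathbb{W}^{\prime}_{2}, \partial \mathbb{W}^{\prime}_{2}; \mathbb{Z})$, so the six listed spheres occupy three unoriented classes $\sigma_{1}, \sigma_{2}, \sigma_{3}$, each containing exactly two listed representatives with opposite orientations.

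Next, any totally geodesic 3-punctured sphere $\Sigma \subset \mathbb{W}^{\prime}_{2}$ satisfies $\chi_{-}(\Sigma) = 1$ and has three (hence an odd number of) boundary components. The bound $\|[\Sigma]\| \leq 1$, the parity criterion for the Thurston norm recalled just before the lemma, and the essentiality $[\Sigma] \neq 0$ together force $\|[\Sigma]\| = 1$. I would then compute the unit Thurston norm ball of $\mathbb{W}^{\prime}_{2}$ and verify that its integer points are precisely $\pm \sigma_{1}, \pm \sigma_{2}, \pm \sigma_{3}$, forming the octahedron with these six vertices. Concretely, the three-fold rotational symmetry of $\mathbb{W}^{\prime}_{2}$ permutes $\sigma_{1}, \sigma_{2}, \sigma_{3}$ and the in-plane involution swaps $D_{1}, D_{2}$, which together, combined with the explicit upper bound coming from the six surfaces of norm $1$, pin the norm polytope to this octahedron. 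It follows that every 3-punctured sphere in $\mathbb{W}^{\prime}_{2}$ has unoriented homology class one of the $\sigma_{i}$.

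Finally I would apply Lemma~\ref{lem:homologous} and Corollary~\ref{cor:disjoint}. After reversing orientation if necessary, any two 3-punctured spheres in the same unoriented class become homologous, hence disjoint by Lemma~\ref{lem:homologous}; since $\vol(\mathbb{W}^{\prime}_{2}) = 2 V_{oct}$, Corollary~\ref{cor:disjoint} allows at most two mutually disjoint 3-punctured spheres in $\mathbb{W}^{\prime}_{2}$. Therefore each unoriented class $\sigma_{i}$ contains exactly two 3-punctured spheres, yielding $3 \times 2 = 6$ in total, which are necessarily the six listed ones. The main obstacle is the Thurston norm calculation in the second paragraph: the symmetries constrain the norm ball significantly, but ruling out any further integer class of norm $1$ (and hence any additional candidate homology class for a 3-punctured sphere) requires a more detailed argument, likely via an explicit polyhedral decomposition of $\mathbb{W}^{\prime}_{2}$ into two regular ideal octahedra and a direct check on the boundary slopes supported by the cusps.
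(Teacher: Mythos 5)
Your overall strategy is the same as the paper's: show every 3-punctured sphere represents one of the three (unoriented) classes of the listed spheres by identifying the unit Thurston norm ball, then use Lemma~\ref{lem:homologous} to make any extra sphere disjoint from the two listed representatives of its class and contradict Corollary~\ref{cor:disjoint} via $\vol(\mathbb{W}^{\prime}_{2}) = 2V_{oct}$. That final counting step is exactly the paper's, and your homological pairing of the six spheres into three classes is also what the paper asserts.

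The step you flag as the ``main obstacle'' is, however, a genuine gap, and your proposed route through symmetry cannot close it on its own: the symmetry group of $\mathbb{W}^{\prime}_{2}$ permuting and negating $x,y,z$ is equally consistent with the octahedron $\mathrm{conv}\{\pm x, \pm y, \pm z\}$ and with strictly larger invariant polytopes such as the cube with vertices $\pm x \pm y \pm z$; in the latter case the classes $\pm x \pm y \pm z$ would have norm $1$ and would be additional candidate classes for 3-punctured spheres, and your argument would not rule them out. The paper closes this by invoking Thurston's computation for the Borromean rings complement: the eight classes $\pm x \pm y \pm z$ are fibered (so not represented by a totally geodesic 3-punctured sphere), and each is represented only by surfaces with at least three boundary components, so by the parity criterion $\| \pm x \pm y \pm z \| \neq 1$ and hence $= 3$; convexity together with the fourteen boundary points $\pm x, \pm y, \pm z, (\pm x \pm y \pm z)/3$ then forces the unit ball to be the octahedron, whose only integer boundary points are $\pm x, \pm y, \pm z$. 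You would need to import this (or an equivalent explicit norm computation) rather than derive it from the octahedral decomposition and symmetry alone. A smaller point: you should also justify that $[\Sigma] \neq 0$ for a totally geodesic 3-punctured sphere before concluding $\|[\Sigma]\| = 1$ from $\chi_{-}(\Sigma) = 1$.
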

\begin{proof}
Thurston~\cite{thurston1986norm} described 
the unit Thurston norm ball for $\mathbb{W}^{\prime}_{2}$ 
as follows. 
We may assume that linearly independent classes 
$x,y,z \in H_{2}(\mathbb{W}^{\prime}_{2}, 
\partial \mathbb{W}^{\prime}_{2}; \mathbb{R})$ 
are represented by three of the 3-punctured spheres 
of the type $\Bor$ 
as described in Section~\ref{section:description}. 
The classes $x,y$ and $z$ form a basis of 
$H_{2}(\mathbb{W}^{\prime}_{2}, 
\partial \mathbb{W}^{\prime}_{2}; \mathbb{R}) \cong \mathbb{R}^{3}$. 
Moreover, $\| x \| = \| y \| = \| z \| = 1$. 
The eight classes $\pm x \pm y \pm z$ are fibered class, 
i.e. they are represented by a fiber of a mapping torus. 
Hence each of the classes $\pm x \pm y \pm z$ 
is not represented by a 3-punctured sphere. 
Moreover, 
each of the classes $\pm x \pm y \pm z$ 
is represented by a surface with at least three boundary components.  
Hence $\| \pm x \pm y \pm z \| \neq 1$. 
Therefore we have $\| \pm x \pm y \pm z \| = 3$. 
The 14 classes $\pm x, \pm y, \pm z$, and $(\pm x \pm y \pm z)/3$ 
are contained in the boundary of the unit Thurston norm ball. 
This fact and the convexity imply that 
the unit Thurston norm ball is the octahedron 
whose vertices are $\pm x, \pm y$, and $\pm z$. 

We know that $\pm x, \pm y$, and $\pm z$ are exactly the integer classes 
in the boundary of the unit Thurston norm ball. 
Hence any 3-punctured sphere in $\mathbb{W}^{\prime}_{2}$ 
represents $\pm x, \pm y$, or $\pm z$. 
By ignoring the orientation, 
we may state that 
for each of $x,y$, and $z$, 
there are two 3-punctured spheres 
each of which represents the class. 
Therefore it is sufficient to show that 
$x,y$ and $z$ cannot be represented by any other 3-punctured sphere. 
Assume that $\mathbb{W}^{\prime}_{2}$ has 
another 3-punctured sphere $\Sigma$. 
Then Lemma~\ref{lem:homologous} implies that 
$\Sigma$ is disjoint from the two 3-punctured spheres 
representing the same class. 
This contradicts Corollary~\ref{cor:disjoint} and the fact that 
$\mathrm{vol} (\mathbb{W}^{\prime}_{2}) = 2V_{oct}$. 
Therefore $\mathbb{W}^{\prime}_{2}$ has no other 3-punctured spheres 
than the described ones of $\Bor$. 
\end{proof}

\begin{lem}
\label{lem:whi4}
The manifold $\mathbb{W}_{2}$ 
has exactly the four 3-punctured spheres of the type $\Whi_{4}$. 
\end{lem}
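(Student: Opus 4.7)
The plan is to imitate the proof of Lemma~\ref{lem:bor6}: compute (or at least sufficiently bound) the Thurston norm ball of $\mathbb{W}_{2}$, determine all integer classes of norm $1$, and then combine Lemma~\ref{lem:homologous} with Corollary~\ref{cor:disjoint} to limit how many 3-punctured spheres may represent each such class. Since the link realizing $\mathbb{W}_{2}$ has three components,
$H_{2}(\mathbb{W}_{2}, \partial \mathbb{W}_{2}; \mathbb{R}) \cong \mathbb{R}^{3}$, and because $\vol(\mathbb{W}_{2}) = 2V_{oct}$, Corollary~\ref{cor:disjoint} gives at most two disjoint 3-punctured spheres in $\mathbb{W}_{2}$.

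First I would identify the homology classes represented by the four 3-punctured spheres of $\Whi_{4}$. Using the $\mathbb{Z}/2$ deck symmetry of the cover $\mathbb{W}_{2} \to \mathbb{W}_{1}$ (equivalently, the cyclic symmetry visible in Figure~\ref{fig:u3ps-whibor}), the two $\Whih_{2}$ spheres should be homologous and the two blue spheres should be homologous; denote the resulting two classes by $x$ and $y$. Each is realized by an embedded 3-punctured sphere, so $\|x\| = \|y\| = 1$.

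Next I would finish the Thurston norm computation along the remaining direction in $H_{2}$. A natural candidate for a third extremal class is a fibered class, e.g.\ one coming from a fibration pulled back from $\mathbb{W}_{1}$ or from a Seifert-type surface for one of the link components. Mimicking Thurston's argument for the Borromean rings, one expects this class to have norm strictly larger than $1$, and then convexity should force the unit norm ball to be a polytope whose only integer boundary points are $\pm x$ and $\pm y$. Granted this, any 3-punctured sphere $\Sigma \subset \mathbb{W}_{2}$ has Thurston norm $1$, hence represents one of $\pm x$ or $\pm y$, so by Lemma~\ref{lem:homologous} is disjoint from the two described representatives of its class --- yielding three pairwise disjoint 3-punctured spheres, contradicting Corollary~\ref{cor:disjoint}.

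The main obstacle will be the Thurston norm calculation itself. Unlike the Borromean rings case, there is no convenient published tabulation to cite, so the third extremal class must be produced and controlled by hand: most likely by exhibiting an explicit surface representative (a fibered or Seifert surface in the appropriate class) and then certifying its norm via the decomposition of $\mathbb{W}_{2}$ into two regular ideal octahedra, which provides the volume-based lower bound needed to conclude that no class transverse to the plane spanned by $x$ and $y$ admits a representative of complexity $1$.
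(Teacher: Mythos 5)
Your overall strategy (describe the unit Thurston norm ball, pin down the integer classes of norm one, then combine Lemma~\ref{lem:homologous} with Corollary~\ref{cor:disjoint}) is exactly the paper's, but your homological setup is wrong, and the error propagates. The deck symmetry of $\mathbb{W}_{2} \to \mathbb{W}_{1}$ permutes the four spheres, but that does not make the permuted spheres homologous: in fact only one pair of the four spheres of $\Whi_{4}$ represents a common class $x$, while the other two spheres represent linearly independent classes $y$ and $z$, so the four spheres already give a basis $x,y,z$ of $H_{2}(\mathbb{W}_{2},\partial\mathbb{W}_{2};\mathbb{R}) \cong \mathbb{R}^{3}$. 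There is therefore no mysterious ``third extremal direction'' to hunt for with a fibered or Seifert surface --- that search is a detour built on a false premise. What actually has to be shown is $\|\pm x\pm y\pm z\| = 3$, and your proposed certification via the octahedral decomposition and a ``volume-based lower bound'' does not work: Miyamoto's bound controls totally geodesic surfaces and hence the number of disjoint 3-punctured spheres, not the Thurston norm of an arbitrary class. The paper instead rules out a 3-punctured sphere representing $x+y+z$ by a boundary-slope count (such a sphere would meet each cusp in exactly one loop, forcing it to be disjoint from both spheres in the class $x$, violating the bound of two disjoint spheres from Corollary~\ref{cor:disjoint}).

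The second gap is in your closing step. Your argument ``an extra sphere in a norm-one class is disjoint from the two described representatives, giving three disjoint spheres'' only works for the class $x$, which genuinely has two representatives. The classes $y$ and $z$ each have only \emph{one} representative among the four described spheres, so an extra sphere homologous to it yields only two disjoint spheres --- no contradiction with Corollary~\ref{cor:disjoint}. The paper closes this case by a separate argument: since the boundary classes of the representative of $y$ do not cancel, the boundary of a hypothetical second representative is forced onto specific cusps, which forces it to meet the intersection geodesic with a neighboring sphere and hence to intersect the sphere it was supposed to be disjoint from. Without some substitute for this step your proof cannot exclude a second sphere in the classes $y$ and $z$, so the argument as proposed does not go through even after the homology is corrected.
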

\begin{proof}
Since $\mathrm{vol}(\mathbb{W}_{2}) = 2V_{oct}$, 
the manifold $\mathbb{W}_{2}$ contains 
at most two disjoint 3-punctured spheres 
by Corollary~\ref{cor:disjoint}. 
Let $\Sigma_{1}, \dots , \Sigma_{4}$ be 
the 3-punctured spheres of $\Whi_{4}$, 
and let $C_{1}, \dots , C_{3}$ be the cusps of $\mathbb{W}_{2}$ 
as shown in Figure~\ref{fig:u3ps-whi4}. 
In order to show that there are no other 3-punctured spheres, 
we describe the unit Thurston norm ball for $\mathbb{W}_{2}$. 
Let $x \in H_{2}(\mathbb{W}_{2}, \partial \mathbb{W}; \mathbb{R})$ 
denote the class represented by each of $\Sigma_{1}$ and $\Sigma_{2}$, 
and let $y,z \in H_{2}(\mathbb{W}_{2}, \partial \mathbb{W}; \mathbb{R})$ 
denote the classes respectively represented 
by $\Sigma_{3}$ and $\Sigma_{4}$. 
The classes $x,y$, and $z$ form a basis of 
$H_{2}(\mathbb{W}_{2}, \partial \mathbb{W}; \mathbb{R}) 
\cong \mathbb{R}^{3}$. 
Moreover, $\| x \| = \| y \| = \| z \| = 1$. 
Consider the class $x+y+z$. 
A surface representing $x+y+z$ intersects the three cusps. 
If we show that $x+y+z$ cannot be represented by a 3-punctured sphere, 
we have $\| x+y+z \| =3$. 

Assume that $x+y+z$ is represented by a 3-punctured sphere $\Sigma$. 
The boundary slopes of $\Sigma$ are determined, 
and each cusp contains one of these slopes. 
The intersection $\Sigma \cap \Sigma_{1}$ 
is disjoint from the cusp $C_{1}$. 
Since each cusp contains exactly one boundary component of $\Sigma$, 
the intersection $\Sigma \cap \Sigma_{i}$ for $i=1,3$ 
is not a separating geodesic in $\Sigma_{i}$. 
Hence $\Sigma$ and $\Sigma_{1}$ are disjoint. 
Similarly, $\Sigma$ and $\Sigma_{2}$ are also disjoint. 
This contradicts the fact that $\mathbb{W}_{2}$ contains 
at most two disjoint 3-punctured spheres. 

In the same manner, 
we have $\| \pm x \pm y \pm z \| = 3$. 
Hence the unit Thurston norm ball for $\mathbb{W}_{2}$ 
is the octahedron whose vertices are $\pm x, \pm y$, and $\pm z$ 
similarly to $\mathbb{W}^{\prime}_{2}$. 
Therefore any 3-punctured sphere in $\mathbb{W}_{2}$ 
represents $\pm x, \pm y$, or $\pm z$. 

\fig[width=8cm]{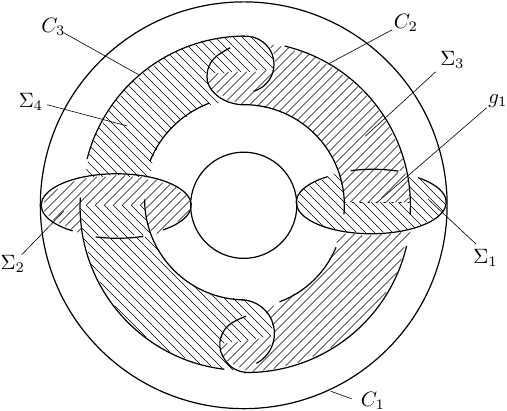}{The 3-punctured spheres of the type $Whi_{4}$}

Now the class $x$ is represented only by $\Sigma_{1}$ and $\Sigma_{2}$. 
Assume that 
$\mathbb{W}_{2}$ has another 3-punctured sphere $\Sigma$ 
representing $y$ than $\Sigma_{3}$. 
Then $\Sigma \cap \Sigma_{3} = \emptyset$. 
Since the homology classes represented 
by the components of $\partial \Sigma_{3}$ 
do not cancel, 
the boundary $\partial \Sigma$ consists of 
a loop in $C_{2}$ and two loops in $C_{3}$. 
Hence $\Sigma$ intersects $\Sigma_{1}$ 
at the geodesic $g_{1} = \Sigma_{1} \cap \Sigma_{3}$. 
This contradicts the fact that $\Sigma \cap \Sigma_{3} = \emptyset$. 
Hence there are no other 3-punctured spheres representing $y$ 
than $\Sigma_{3}$. 
The same argument holds for $z$. 
Therefore $\mathbb{W}_{2}$ has no other 3-punctured spheres
than $\Sigma_{1}, \dots , \Sigma_{4}$. 
\end{proof}

\begin{lem}
\label{lem:tet8}
The manifold $\mathbb{M}_{4}$ 
has exactly the eight 3-punctured spheres of the type $\Tet$. 
\end{lem}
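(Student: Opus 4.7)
The plan is to mirror the strategy used for $\mathbb{W}_{2}^{\prime}$ and $\mathbb{W}_{2}$ in Lemmas~\ref{lem:bor6} and \ref{lem:whi4}. Since $\vol(\mathbb{M}_{4}) = 2V_{oct}$, Corollary~\ref{cor:disjoint} immediately bounds the number of mutually disjoint totally geodesic 3-punctured spheres in $\mathbb{M}_{4}$ by two. The manifold has four cusps, so $H_{2}(\mathbb{M}_{4}, \partial \mathbb{M}_{4}; \mathbb{R}) \cong \mathbb{R}^{4}$. The goal is to determine the unit Thurston norm ball and then apply Lemma~\ref{lem:homologous}.

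First I would exploit the involutional symmetry of $\mathbb{M}_{4}$ described in Section~\ref{section:description}: since the quotient decomposes into four ideal tetrahedra and each 3-punctured sphere is the preimage of a face under the double branched covering, the eight 3-punctured spheres of the type $\Tet$ pair off into four pairs of disjoint spheres, each pair representing a common class. Call these classes $x_{1}, x_{2}, x_{3}, x_{4}$; each has Thurston norm equal to $1$. Using the boundary slopes on the four cusps (tracked through the chain-link picture of Figure~\ref{fig:u3ps-magtetpenoct}), I would verify that $x_{1}, x_{2}, x_{3}, x_{4}$ are linearly independent and hence form a basis.

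Next, in the spirit of Lemma~\ref{lem:whi4}, I would consider any class $\epsilon_{1} x_{1} + \epsilon_{2} x_{2} + \epsilon_{3} x_{3} + \epsilon_{4} x_{4}$ with $\epsilon_{i} = \pm 1$. If such a class were represented by a 3-punctured sphere $\Sigma$, then on each cusp $\Sigma$ would contribute exactly one boundary loop with a prescribed slope. A case analysis of intersection with the known spheres of type $\Tet$ shows that the intersection $\Sigma \cap \Sigma_{j}$ in each of these eight spheres would have to be a separating curve --- but since $\Sigma$ meets each cusp in a single loop, these intersections cannot be separating, and hence $\Sigma$ must in fact be disjoint from enough of the eight spheres to produce three pairwise disjoint 3-punctured spheres, contradicting Corollary~\ref{cor:disjoint}. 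Therefore $\| \epsilon_{1} x_{1} + \epsilon_{2} x_{2} + \epsilon_{3} x_{3} + \epsilon_{4} x_{4} \| = 3$, and by convexity the unit norm ball is the crosspolytope with vertices $\pm x_{1}, \pm x_{2}, \pm x_{3}, \pm x_{4}$, whose only integer boundary points are these vertices themselves.

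To finish, any additional 3-punctured sphere $\Sigma^{\prime}$ must represent some $\pm x_{i}$; Lemma~\ref{lem:homologous} then forces $\Sigma^{\prime}$ to be disjoint from the two spheres of type $\Tet$ already representing $x_{i}$, producing three disjoint 3-punctured spheres in $\mathbb{M}_{4}$ and again contradicting Corollary~\ref{cor:disjoint}. The main obstacle will be the norm-ball computation: one has to choreograph the combinatorial argument ruling out the $\pm x_{1} \pm x_{2} \pm x_{3} \pm x_{4}$ as classes of 3-punctured spheres, and this requires a careful bookkeeping of boundary slopes on all four cusps together with the constraint that $\Sigma$ intersects each $\Sigma_{j}$ of type $\Tet$ in curves whose separating/non-separating character is dictated by how many boundary components of $\Sigma$ lie on each shared cusp.
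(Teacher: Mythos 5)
Your overall strategy (determine the unit Thurston norm ball, then combine Lemma~\ref{lem:homologous} with Corollary~\ref{cor:disjoint}) is the right one and is what the paper does, but the homological bookkeeping at its core is wrong, and this breaks the argument. The eight 3-punctured spheres of the type $\Tet$ do \emph{not} pair off into four pairs of disjoint spheres representing four common classes. They represent eight pairwise distinct classes: with $\Sigma_{1},\dots,\Sigma_{4}$ representing $x,y,z,w$, the remaining four represent $y+z+w$, $-x+z+w$, $x+y-w$, and $x+y+z$. In particular, a class such as $x+y+z$ has Thurston norm $1$ and \emph{is} represented by one of the eight spheres of the type $\Tet$, so your claimed computation $\| \pm x_{1} \pm x_{2} \pm x_{3} \pm x_{4}\| = 3$ is false for the actual classes. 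Consequently the unit norm ball is not the crosspolytope on $\pm x_{1},\dots,\pm x_{4}$: in the basis $u_{1}=(x+y)/2$, $u_{2}=(y+z)/2$, $u_{3}=(z+w)/2$, $u_{4}=(x-w)/2$ it is the 4-dimensional \emph{cube} whose 16 vertices are $\pm$ the eight classes above. The cube and the crosspolytope are dual polytopes, so you have essentially the opposite answer, and the case analysis you propose to rule out the classes $\pm x_{1}\pm x_{2}\pm x_{3}\pm x_{4}$ cannot succeed.

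The closing step also fails as written. Since each vertex class is represented by exactly one sphere of the type $\Tet$ (not two), Lemma~\ref{lem:homologous} forces a hypothetical extra sphere $\Sigma$ representing $x$ to be disjoint from only $\Sigma_{1}$, which is not yet a contradiction. The paper supplies the missing ingredient with a slope argument: the three components of $\partial\Sigma$ lie in distinct cusps, hence have the same slopes as the components of $\partial\Sigma_{1}$, which forces $\Sigma\cap\Sigma_{3}=\emptyset$ as well; as $\Sigma_{1}$ and $\Sigma_{3}$ are disjoint, this yields three pairwise disjoint 3-punctured spheres and contradicts Corollary~\ref{cor:disjoint} via $\vol(\mathbb{M}_{4})=2V_{oct}$. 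Your appeal to the isometry group of $\mathbb{M}_{4}$ to reduce to a single class is correct and is exactly what the paper does.
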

\begin{proof}
We first describe the unit Thurston norm ball for $\mathbb{M}_{4}$. 
Let $\Sigma_{1}, \dots , \Sigma_{4}$ be the 3-punctured spheres 
as shown in Figure~\ref{fig:u3ps-tet8}, 
which respectively represent 
$x,y,z,w \in 
H_{2}(\mathbb{M}_{4}, \partial \mathbb{M}_{4}; \mathbb{R})$. 
Here the orientations of these 3-punctured spheres 
are induced by the projection to the diagram. 
Let $\Sigma_{5}, \dots , \Sigma_{8}$ denote the 3-punctured spheres 
that respectively represent $y+z+w, -x+z+w, x+y-w$, and $x+y+z$. 
These eight 3-punctured spheres are the ones described 
in Section~\ref{section:description}. 

Let $u_{1}=(x+y)/2, u_{2}=(y+z)/2, u_{3}=(z+w)/2$, and $u_{4}=(x-w)/2$. 
The classes $u_{1}, \dots , u_{4}$ form a basis of 
$H_{2}(\mathbb{M}_{4}, \partial \mathbb{M}_{4}; \mathbb{R}) 
\cong \mathbb{R}^{4}$. 
With respect to this basis, 
we can present classes as 
\begin{align*}
x=(1,-1,1,1), \ y=(1,1,-1,-1), &\ z=(-1,1,1,1), \ w=(1,-1,1,-1), \\
y+z+w=(1,1,1,-1), &\ -x+z+w=(-1,1,1,-1), \\
x+y-w=(1,1,-1,1), &\ x+y+z=(1,1,1,1). 
\end{align*}
Since the norms of $u_{1}, \dots , u_{4}, x,y,z,w, y+z+w,-x+z+w,x+y-w$, 
and $x+y+z$ 
are equal to one, 
the convexity implies that 
the unit Thurston norm ball for $\mathbb{M}_{4}$ 
is the 4-dimensional cube whose vertices are 
$\pm x, \pm y, \pm z, \pm w, 
\pm (y+z+w), \pm (-x+z+w), \pm (x+y-w)$, and $\pm (x+y+z)$. 
Therefore any 3-punctured sphere in $\mathbb{M}_{4}$ 
represents one of these classes. 

\fig[width=8cm]{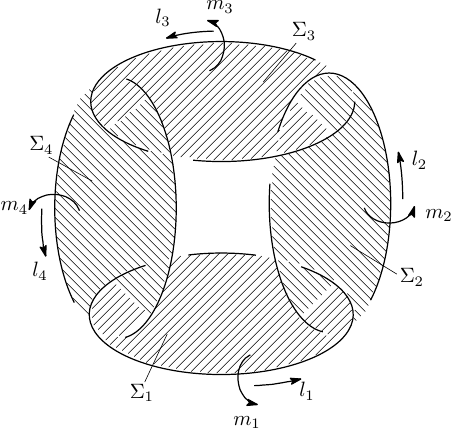}{Four 3-punctured spheres in $\mathbb{M}_{4}$}

As we showed in Section \ref{section:description}, 
for any pair of $\Sigma_{1}, \dots , \Sigma_{8}$, 
there is an isometry of $\mathbb{M}_{4}$ 
that maps one of the pair to the other. 
Hence it is sufficient to show that $\Sigma_{1}$ is 
the unique 3-punctured sphere representing $x$. 

Since $\mathrm{vol}(\mathbb{M}_{4}) = 2V_{oct}$, 
the manifold $\mathbb{M}_{4}$ contains 
at most two disjoint 3-punctured spheres 
by Corollary~\ref{cor:disjoint}. 
Assume that $\mathbb{M}_{4}$ has another 3-punctured sphere $\Sigma$ 
representing $x$ than $\Sigma_{1}$. 
Lemma~\ref{lem:homologous} implies that 
$\Sigma \cap \Sigma_{1} = \emptyset$. 
Since the components of $\partial \Sigma$ are contained in distinct cusps, 
their three slopes are the same as the slopes of $\partial \Sigma_{1}$. 
Hence $\Sigma \cap \Sigma_{3} = \emptyset$. 
This contradicts the fact that 
$\mathbb{M}_{4}$ contains at most two disjoint 3-punctured spheres. 
\end{proof}

\begin{lem}
\label{lem:chain2}
Let $M$ be a hyperbolic 3-manifold obtained by a (non-empty) Dehn filling 
on the cusp of $\mathbb{W}_{2}$ or $\mathbb{W}^{\prime}_{2}$ 
as in Lemma~\ref{lem:type3}. 
Then $M$ has exactly the two 3-punctured spheres 
of the type $\Whih_{2}$ or $\Whiph_{2}$ respectively. 
\end{lem}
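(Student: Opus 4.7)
The plan is to show that every essential 3-punctured sphere in $M$ can be isotoped to miss the solid torus $V$ attached by the filling, and then to invoke the enumerations from Lemma~\ref{lem:whi4} and Lemma~\ref{lem:bor6}. Let $C_{0}$ be the cusp of $\mathbb{W}_{2}$ (resp. $\mathbb{W}^{\prime}_{2}$) that is being filled, and let $T = \partial V$. From the regular-neighborhood analysis in Lemma~\ref{lem:type3}, the union of the two 3-punctured spheres $\Sigma_{1}, \Sigma_{2}$ of $\Whih_{2}$ (resp. $\Whiph_{2}$) with their adjacent cusps has a regular neighborhood whose torus boundary is a neighborhood of $C_{0}$. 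In particular $\Sigma_{1}$ and $\Sigma_{2}$ are disjoint from $C_{0}$ and so persist as essential 3-punctured spheres in $M$. It remains to show that they are the only ones.

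Let $\Sigma$ be any essential 3-punctured sphere in $M$, isotoped to be transverse to $T$ with $|\Sigma \cap T|$ minimized. Standard innermost-disk arguments, using incompressibility of $\Sigma$ in $M$ together with incompressibility of $T$ in $M \setminus V$ (which is homeomorphic to $\mathbb{W}_{2}$ with an open neighborhood of $C_{0}$ removed), eliminate all intersection circles that bound disks in either $\Sigma$ or $T$. So we may assume every circle of $\Sigma \cap T$ is essential in both surfaces. Since every essential simple closed curve in the 3-punctured sphere $\Sigma$ is boundary-parallel, each such circle $c$ cobounds in $\Sigma$ an annulus with some boundary component $s_{i}$ of $\partial \Sigma$; taking $c$ innermost among intersection circles parallel to $s_{i}$, the resulting annulus $A \subset \Sigma$ has interior disjoint from $T$ and therefore lies in $M \setminus V$.

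Viewed inside $\mathbb{W}_{2}$, the annulus $A$ has one boundary on the cusp $C_{j(i)}$ containing $s_{i}$ and the other boundary on the cusp $C_{0}$, so it is not boundary-parallel. The incompressibility of $\Sigma$ in $M$ forces the core curve of $A$, namely $s_{i}$, to be non-trivial in $\pi_{1}(M)$, hence also non-trivial in $\pi_{1}(\mathbb{W}_{2})$ since $s_{i}$ lies on a cusp torus. Thus $A$ is an essential annulus in the hyperbolic manifold $\mathbb{W}_{2}$, contradicting its anannularity. Therefore $\Sigma \cap T = \emptyset$, so $\Sigma$ is an essential 3-punctured sphere in $\mathbb{W}_{2}$ with boundary avoiding $C_{0}$. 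By Lemma~\ref{lem:whi4} (resp. Lemma~\ref{lem:bor6}) the 3-punctured spheres in $\mathbb{W}_{2}$ (resp. $\mathbb{W}^{\prime}_{2}$) are exactly the four of $\Whi_{4}$ (resp. the six of $\Bor$), and among these only $\Sigma_{1}$ and $\Sigma_{2}$ have boundary disjoint from $C_{0}$.

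The main obstacle is the transition from a boundary-parallel annulus in $\Sigma$ to a genuinely essential annulus in the unfilled $\mathbb{W}_{2}$: one must verify that $\pi_{1}$-injectivity of $A$ survives removal of the filled cusp, which relies on the primitivity of the boundary loop $s_{i}$ and on the fact that cusp loops in a hyperbolic manifold are never trivial. Once this is established the contradiction with anannularity of $\mathbb{W}_{2}$ is immediate, and the enumeration concludes the argument.
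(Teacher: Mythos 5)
There is a genuine gap at the very first step, and it is the step that carries all the content of the lemma. You claim that after innermost-disk arguments every circle of $\Sigma\cap T$ may be assumed essential in both $\Sigma$ and $T$. This is false: a circle of $\Sigma\cap T$ that is a meridian of the filling solid torus $V$ and bounds a disk of $\Sigma\cap V$ (a meridian disk of $V$) is essential in $T$ but inessential in $\Sigma$, and it cannot be removed, because $T$ is compressible inside $V$ precisely along the filling slope --- your appeal to incompressibility of $T$ only applies on the $M\setminus V$ side. This is the standard picture of an essential surface in a Dehn filling: $\Sigma$ is the union of an $n$-punctured sphere $\Sigma'$ in the unfilled manifold with $(n-3)$ meridian disks in $V$, exactly the setup the paper adopts in Lemmas~\ref{lem:tet2} and \ref{lem:whi3}. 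Your essential-annulus argument only treats circles that are essential in $\Sigma$, so this case is simply not addressed, and the paragraph where you identify ``the main obstacle'' points at the wrong obstacle. That the case genuinely occurs in the closest analogue is shown by Lemma~\ref{lem:tet2}: the filling $\mathbb{M}_{3}$ of $\mathbb{M}_{4}$ has four 3-punctured spheres, of which only two are disjoint from the filled solid torus; the other two must meet it. So ``every essential 3-punctured sphere in $M$ can be isotoped off $V$'' is not a general principle but is essentially equivalent to the statement being proved.

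For comparison, the paper avoids the isotopy question entirely and argues homologically inside $M$ itself: Mayer--Vietoris and Poincar\'{e} duality give $H_{2}(M,\partial M;\mathbb{R})\cong\mathbb{R}^{2}$; the two known 3-punctured spheres represent a basis $x,y$ with $\|x\|=\|y\|=1$ and $\|x\pm y\|=2$, so the unit Thurston norm ball is the square with vertices $\pm x,\pm y$ and every 3-punctured sphere in $M$ represents $\pm x$ or $\pm y$. Lemma~\ref{lem:homologous} then forces any additional 3-punctured sphere to be disjoint from $\Sigma_{1}$ or $\Sigma_{2}$, while $\vol(M)<2V_{oct}$ (hyperbolic Dehn filling strictly decreases volume) and Corollary~\ref{cor:disjoint} forbid two disjoint 3-punctured spheres. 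If you want to salvage your approach, you would have to add an argument bounding the classes $[\Sigma']$ and the boundary slopes on $C_{0}$ of a putative $n$-punctured sphere with $n>3$, as is done in Lemmas~\ref{lem:tet2} and \ref{lem:whi3}; your reduction to Lemmas~\ref{lem:whi4} and \ref{lem:bor6} handles only the surfaces disjoint from $V$.
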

\begin{proof}
The Mayer-Vietoris sequence and the Poincar\'{e} duality imply that 
\\ $H_{2}(M, \partial M; \mathbb{R}) \cong \mathbb{R}^{2}$. 
The manifold $M$ contains at least two 3-punctured spheres 
of the type $\Whih_{2}$ or $\Whiph_{2}$. 
They represent classes $x$ and $y$ 
that form a basis of $H_{2}(M, \partial M; \mathbb{R})$. 
Since $\| x \| = \| y \| =1$ and $\| x+y \| = \| x-y \| =2$, 
the unit Thurston norm ball for $M$ is the square 
whose vertices are $\pm x$ and $\pm y$. 
Hence any 3-punctured sphere in $M$ represents $\pm x$ or $\pm y$. 
Since a hyperbolic Dehn surgery decreases 
the volume~\cite[Theorem 6.5.6]{thurston1978geometry}, 
we have $\vol (M) < 2V_{oct}$. 
Hence the manifold $M$ does not contain two disjoint 3-punctured spheres 
by Corollary~\ref{cor:disjoint}. 
Therefore $M$ has no other 3-punctured spheres. 
\end{proof}

\begin{lem}
\label{lem:tet2}
Let $M$ be a hyperbolic 3-manifold obtained by a (non-empty) Dehn filling 
on a cusp of $\mathbb{M}_{4}$. 
If $M = \mathbb{M}_{3}$, 
it has exactly the four 3-punctured spheres of the type $\Mag$. 
Otherwise 
$M$ has exactly the two 3-punctured spheres of the type $\Teth$. 
\end{lem}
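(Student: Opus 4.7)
The plan is to adapt the Thurston-norm/volume-bound technique of Lemmas~\ref{lem:tet8} and~\ref{lem:chain2}. First observe that the two 3-punctured spheres $\Sigma_{1},\Sigma_{2}$ of type $\Teth$ in $M$ exist automatically: among the eight 3-punctured spheres of $\Tet$ in $\mathbb{M}_{4}$, by the symmetry described in Section~\ref{section:description}, each cusp is met by exactly six of them, so exactly two are disjoint from the filled cusp and descend to $M$ as embedded totally geodesic 3-punctured spheres meeting in the type (iii) pattern.

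Next, apply the volume decrease under hyperbolic Dehn surgery to obtain $\vol(M)<\vol(\mathbb{M}_{4})=2V_{oct}$, so Corollary~\ref{cor:disjoint} forces any two embedded 3-punctured spheres in $M$ to intersect. Hence every further 3-punctured sphere $\Sigma\subset M$ meets both $\Sigma_{1}$ and $\Sigma_{2}$. By Lemma~\ref{lem:homologous}, $\Sigma$ is not homologous to either $\Sigma_{1}$ or $\Sigma_{2}$. Since $M$ has three cusps, $H_{2}(M,\partial M;\mathbb{R})\cong\mathbb{R}^{3}$, so exactly one extra homology class is available for $\Sigma$ to occupy.

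The crux is to decide when such a $\Sigma$ exists. I would classify each intersection $\Sigma\cap\Sigma_{i}$ ($i=1,2$) using Proposition~\ref{prop:intersection} and Lemma~\ref{lem:parity}; a type (iii) intersection on either side would, via Lemma~\ref{lem:type3}, force the neighborhood of $\Sigma\cup\Sigma_{i}$ to be glued from more regular ideal octahedra than $\vol(M)<2V_{oct}$ permits, so these intersections must be of types (i) or (ii). Tracking the boundary slopes of $\Sigma$ on the three surviving cusps through the intersections pins down the slope on the filled cusp of $\mathbb{M}_{4}$ required for $\Sigma$ to embed; a direct computation in the ideal-octahedral picture used in Lemma~\ref{lem:type3} should show that this slope is realized by exactly the Dehn filling that produces the magic manifold $\mathbb{M}_{3}$.

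Finally, in the case $M=\mathbb{M}_{3}$ the four 3-punctured spheres of $\Mag$ are present by Section~\ref{section:description}; I would confirm that they are the only ones by computing the unit Thurston norm ball of $\mathbb{M}_{3}$ from three linearly independent $\Mag$-classes (along the lines of Lemma~\ref{lem:tet8}), then using Lemma~\ref{lem:homologous} together with the bound $\lfloor\vol(\mathbb{M}_{3})/V_{oct}\rfloor=1$ to exclude other representatives of each vertex class. The main obstacle is the slope calculation in the previous paragraph: identifying precisely which surgery slope on $\mathbb{M}_{4}$ yields $\mathbb{M}_{3}$, and showing that no other non-empty filling admits a third 3-punctured sphere.
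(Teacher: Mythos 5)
Your outline agrees with the paper on the bookkeeping: the two surviving 3-punctured spheres $\Sigma_{3},\Sigma_{5}$ of $\mathbb{M}_{4}$ disjoint from the filled cusp give $\Teth$; $\vol(M)<2V_{oct}$ plus Corollary~\ref{cor:disjoint} forbids two disjoint 3-punctured spheres in $M$; and the $\mathbb{M}_{3}$ case is handled exactly as in the paper (Thurston's norm ball for $\mathbb{M}_{3}$ together with Lemma~\ref{lem:homologous} and the volume bound). But the central step --- showing that no hyperbolic filling other than those yielding $\mathbb{M}_{3}$ admits a third 3-punctured sphere --- is not actually carried out, and you say so yourself. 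Two of the intermediate claims you lean on are also not sound. First, ``exactly one extra homology class is available'' does not follow from $H_{2}(M,\partial M;\mathbb{R})\cong\mathbb{R}^{3}$ and $\Sigma$ not being homologous to $\Sigma_{1}$ or $\Sigma_{2}$: there are infinitely many remaining integral classes, and you have not computed the unit Thurston norm ball of $M$ (which depends on the filling). Second, a type (iii) intersection between $\Sigma$ and $\Sigma_{i}$ is not excluded by volume: Lemma~\ref{lem:type3} only says the ambient manifold is a Dehn filling of $\mathbb{W}_{2}$, $\mathbb{W}^{\prime}_{2}$, or $\mathbb{M}_{4}$, each of volume $2V_{oct}$, which is perfectly consistent with $\vol(M)<2V_{oct}$.

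The paper closes the gap by an explicit homological computation in $\mathbb{M}_{4}$ rather than an octahedral-geometry argument. A hypothetical extra 3-punctured sphere $\Sigma\subset M$ is written as the union of an $n$-punctured sphere $\Sigma^{\prime}\subset\mathbb{M}_{4}$ with $(n-3)$ meridian disks in the filled solid torus; writing $[\Sigma^{\prime}]=ax+by+cz+dw$ one computes $[\partial\Sigma^{\prime}]$ in the $m_{i},l_{i}$ basis, and Proposition~\ref{prop:intersection} (at most two intersection points between boundary components of $\Sigma$ and of $\Sigma_{3},\Sigma_{5}$) yields six determinant inequalities such as $|b|\leq 2$, $|a-b+c|\leq 2$, $|b+d|\leq 2$, $|b-2c+d|\leq 2$, $|d|\leq 2$, $|a-c+d|\leq 2$. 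A parity analysis splitting on whether $(b-d,a)=(0,0)$, together with the relation $p/q=(b-d)/a$ tying $(a,b,c,d)$ to the filling slope, reduces to a short finite list of candidate classes, each excluded either by Corollary~\ref{cor:disjoint} or by counting boundary components. Your proposal would need to supply some equivalent enumeration mechanism; without it, the claim that only the $\mathbb{M}_{3}$ fillings admit additional 3-punctured spheres is unproven.
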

\begin{proof}
Thurston~\cite{thurston1986norm} described 
the unit Thurston norm ball for $\mathbb{M}_{3}$ 
as follows. 
We may assume that the four known 3-punctured spheres represent 
$x,y,z,x+y+z \in 
H_{2}(\mathbb{M}_{3}, \partial \mathbb{M}_{3}; \mathbb{R})$. 
Then the unit Thurston norm ball is the parallelepiped 
whose vertices are $\pm x, \pm y, \pm z$, and $\pm (x+y+z)$. 
Since $\vol (\mathbb{M}_{3}) < 2V_{oct}$, 
the manifold $\mathbb{M}_{3}$ has no other 3-punctured spheres 
by Corollary~\ref{cor:disjoint}. 

We orient the meridians $m_{i}$ and the longitudes $l_{i}$ 
of the cusps of $\mathbb{M}_{4}$ as shown in Figure~\ref{fig:u3ps-tet8}. 
For coprime integers $p \geq 0$ and $q$, 
let $M$ be a hyperbolic 3-manifold obtained by the $(p,q)$-Dehn filling 
on the first cusp of $\mathbb{M}_{4}$. 
In other words, $M$ is obtained by gluing a solid torus 
to $\mathbb{M}_{4}$ along the slope $pm_{1}+ql_{1}$ as meridian. 
If $(p,q)=(0,1), (1,0), (1,1)$, or $(2,1)$, then $M$ is not hyperbolic. 
If $(p,q)=(1,-1), (1,2), (3,1)$, or $(3,2)$, then $M=\mathbb{M}_{3}$. 
We exclude these cases. 
Note that in general four Dehn fillings give a common 3-manifold 
by the symmetry of $\mathbb{M}_{4}$. 

Following the notation in Lemma~\ref{lem:tet8}, 
the two 3-punctured spheres $\Sigma_{3}$ and $\Sigma_{5}$ 
in $\mathbb{M}_{4}$ are disjoint from the filled cusp. 
Their union is $\Teth$. 
Assume that $M$ contains another 3-punctured sphere $\Sigma$ 
than $\Sigma_{3}$ or $\Sigma_{5}$. 
We may assume that 
$\Sigma$ is the union 
of an $n$-punctured sphere $\Sigma^{\prime}$ in $\mathbb{M}_{4}$ 
and $(n-3)$ essential disks in the filled solid torus. 
Suppose that $\Sigma^{\prime}$ represents 
$ax+by+cz+dw \in H_{2}(\mathbb{M}_{4}, \partial \mathbb{M}_{4}; \mathbb{R})$ 
for $a,b,c,d \in \mathbb{Z}$. 
Note that $n$ is odd if and only if $a+b+c+d$ is odd. 
Then the homology classes of the boundaries are 
\begin{align*}
[\partial \Sigma^{\prime}]
&= ((b-d)m_{1}+al_{1})+((a+c)m_{2}+bl_{2}) \\
&+((b+d)m_{3}+cl_{3})+((-a+c)m_{4}+dl_{4}), \\
[\partial \Sigma_{3}] &= m_{2}+l_{3}+m_{4}, \\
[\partial \Sigma_{5}] &= (m_{2}+l_{2})+(2m_{3}+l_{3})+(m_{4}+l_{4}). 
\end{align*}
For $i=3$ and $5$, 
two components of $\partial \Sigma$ and $\partial \Sigma_{i}$ 
intersect in at most two points 
by Proposition~\ref{prop:intersection}. 
Hence 
\begin{align*}
\biggl| \det 
\begin{pmatrix}
a+c & 1 \\ 
b & 0
\end{pmatrix}
\biggr|
= |b| \leq 2, \
&\biggl| \det 
\begin{pmatrix}
a+c & 1 \\ 
b & 1
\end{pmatrix}
\biggr|
= |a-b+c| \leq 2, \\
\biggl| \det 
\begin{pmatrix}
b+d & 0 \\ 
c & 1
\end{pmatrix}
\biggr|
= |b+d| \leq 2, \
&\biggl| \det 
\begin{pmatrix}
b+d & 2 \\ 
c & 1
\end{pmatrix}
\biggr|
= |b-2c+d| \leq 2, \\
\biggl| \det 
\begin{pmatrix}
-a+c & 1 \\ 
d & 0
\end{pmatrix}
\biggr|
= |d| \leq 2, \
&\biggl| \det 
\begin{pmatrix}
-a+c & 1 \\ 
d & 1
\end{pmatrix}
\biggr|
= |a-c+d| \leq 2. 
\end{align*}

Suppose that $(b-d,a)=(0,0)$. 
Then $|b| \leq 1, |b-c| \leq 1$, and $n$ is odd. 
Since $c$ is odd, we have $(b,c)=\pm (0,1)$ or $\pm (1,1)$. 
If $(a,b,c,d)=\pm (0,0,1,0)$, 
then $\Sigma$ is disjoint from $\Sigma_{3}$. 
If $(a,b,c,d)=\pm (0,1,1,1)$, 
then $\Sigma$ is disjoint from $\Sigma_{5}$. 
Since $\vol (M) < 2V_{oct}$, these are impossible by Corollary~\ref{cor:disjoint}. 

Suppose that $(b-d,a) \neq (0,0)$. 
Then $p/q = (b-d)/a$. 
By reversing the orientation if necessary, 
we may assume that $b-d \geq 0$. 
Then $(b-d)/p$ is odd if and only if $n$ is even. 
Therefore it is sufficient to consider the cases 
\begin{align*}
(a,b,c,d)=
&(-1,0,-1,-2), (-1,2,1,0), (1,2,\pm 1,-2), \\
&(3,0,-1,-2), (3,2,1,0), (3,2,\pm 1,-2). 
\end{align*}
Suppose that $(a,b,c,d)=(-1,0,-1,-2)$. 
Then $(a+c,b)=(-2,0), (b+d,c)=(-2,-1)$, and $(-a+c,d)=(0,-2)$. 
Hence $\partial \Sigma^{\prime}$ has at least five components 
disjoint from the filled cusp. 
This is impossible. 
The other cases are also impossible similarly. 
\end{proof}

\subsection{The unions with the type (ii)-intersection}
\label{subsection:type2}

In this subsection, we classify the unions of 3-punctured spheres
that contain the type (ii)-intersection shown in Proposition~\ref{prop:intersection}. 

\begin{lem}
\label{lem:type2}
Suppose that two 3-punctured spheres $\Sigma_{1}$ and $\Sigma_{2}$ 
in a hyperbolic 3-manifold $M$ 
have the type (ii)-intersection. 
In other words, we may assume that 
$\Sigma_{1} \cap \Sigma_{2}$ is separating in $\Sigma_{1}$ 
and non-separating in $\Sigma_{2}$. 
Let $X$ be the component of the union of the 3-punctured spheres in $M$ 
that contains $\Sigma_{1}$ and $\Sigma_{2}$. 
Then $X$ is $B_{2n}, \Whi_{2n}, \Whip_{4n}$, or $\Bor$. 
(We allow $2n = \infty$.) 
\end{lem}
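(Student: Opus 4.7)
The plan is to grow the component $X$ starting from $\Sigma_{1} \cup \Sigma_{2}$ by attaching further 3-punctured spheres one at a time, so that the combinatorial structure of $X$ is exhibited and matched against the four listed possibilities.

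First I analyze the arc $g = \Sigma_{1} \cap \Sigma_{2}$. Since $g$ is separating in $\Sigma_{1}$ and non-separating in $\Sigma_{2}$, it cuts $\Sigma_{1}$ into a once-punctured disk and a twice-punctured disk, and both endpoints of $g$ lie on a single cusp $C$ of $\Sigma_{2}$, which must be identified with the cusp of $\Sigma_{1}$ on the once-punctured side. Lemma~\ref{lem:parity} fixes the signs of the boundary loops meeting in $C$, and one checks that a regular neighborhood of $\Sigma_{1} \cup \Sigma_{2}$ realizes the $B_{2}$ piece of Figure~\ref{fig:u3ps-b2n}.

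Next I search for further 3-punctured spheres meeting $\Sigma_{1}$ or $\Sigma_{2}$ at their free cusps. By Proposition~\ref{prop:intersection}, any new intersection is of type (i), (ii), or (iii). A new (S,N)-intersection simply extends the chain by one ``Blue--Black'' link, and continued iteration yields an alternating chain. A type (iii)-intersection appearing anywhere along the way is governed by Lemma~\ref{lem:type3}: combined with the pre-existing (S,N)-intersection, it forces $M$ to be one of $\mathbb{W}_{n}$ (giving $X = \Whi_{2n}$), $\mathbb{W}'_{2n}$ (giving $X = \Whip_{4n}$), or $\mathbb{W}'_{2}$ (giving $X = \Bor$, taking the place of the would-be $\Whip_{4}$); the case $\mathbb{M}_{4}$ is excluded because $\Tet$ contains no (S,N)-intersection.

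Iterating the extension, the growth of $X$ terminates in one of three ways: both ends cap off with an isolated 3-punctured sphere corresponding to the boundary of $\mathbb{B}_{n+1}$, yielding $B_{2n}$; the chain closes into a cycle, yielding $\Whi_{2n}$, $\Whip_{4n}$, or $\Bor$; or the chain extends infinitely in one or both directions, yielding the $2n = \infty$ cases $B_{\infty}$ or $\Whi_{\infty}$ allowed by the statement. The main obstacle will be the closure step: distinguishing $\Whi$ from $\Whip$ by the twist parameter at the place where the cycle closes, and recognizing that the short cyclic closure must be $\Bor$ rather than $\Whip_{4}$. Ruling out any extra 3-punctured sphere of $X$ outside the chain rests on the Thurston-norm classifications of 3-punctured spheres in the ambient manifolds established in Lemmas~\ref{lem:whi4}, \ref{lem:bor6}, and \ref{lem:chain2}.
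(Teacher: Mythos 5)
Your skeleton (build a regular neighborhood of $\Sigma_{1}\cup\Sigma_{2}$ realizing the $B_{2}$ piece, then grow the chain and close with a terminate/cyclic/infinite trichotomy) matches the paper's strategy, but the proposal misses the one step that actually forces the conclusion. The four listed types all have the property that \emph{every} member of the underlying $A$-type chain carries its own ``blue'' companion meeting it in an (S,N)-intersection. Your growth process allows a new sphere $\Sigma_{3}$ to attach to the free cusp of $\Sigma_{1}$ by a plain (N,N)-intersection, and nothing in the proposal prevents the process from stopping there, producing a $B_{2}$ piece with a bare $A$-type tail --- which is none of $B_{2n}$, $\Whi_{2n}$, $\Whip_{4n}$, $\Bor$. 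The paper's proof turns on the assertion that whenever $\Sigma_{1}$ meets a further 3-punctured sphere $\Sigma_{3}$, there automatically \emph{exists} another 3-punctured sphere $\Sigma_{4}$, homologous to $\Sigma_{2}$, intersecting only $\Sigma_{3}$; this propagation of blue companions along the chain is what makes the union $B_{2n}$ (or its cyclic/infinite versions) rather than a mixed configuration. You need to supply and justify that existence claim.

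Two smaller but real problems. First, your treatment of the type (iii) case misapplies Lemma~\ref{lem:type3}: that lemma only ever yields Dehn fillings of $\mathbb{W}_{2}$, $\mathbb{W}^{\prime}_{2}$, or $\mathbb{M}_{4}$, never $\mathbb{W}_{n}$ or $\mathbb{W}^{\prime}_{2n}$ for $n\geq 3$; indeed $\Whi_{2n}$ and $\Whip_{4n}$ with $n\geq 3$ contain no type (iii)-intersection at all, and arise purely from the cyclic closure of the (N,N)-chain, so the sentence ``forces $M$ to be one of $\mathbb{W}_{n}$ (giving $X=\Whi_{2n}$)\dots'' does not follow from the cited lemma. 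Second, the paper reaches $\Bor$ in this lemma not via Lemma~\ref{lem:type3} but by a direct analysis of a third sphere $\Sigma$ meeting $\Sigma_{2}$: Proposition~\ref{prop:intersection} limits $\Sigma\cap\Sigma_{2}\cap C$ to at most two points, and both resulting configurations force $M=\mathbb{W}^{\prime}_{2}$, whence $X=\Bor$ by Lemma~\ref{lem:bor6}. Your proposal has no argument covering a third sphere that meets $\Sigma_{2}$ itself (as opposed to meeting $\Sigma_{1}$ or attaching at a free cusp), so that branch of the case analysis is missing. (Minor: a separating geodesic in $\Sigma_{1}$ cuts it into two once-punctured disks, not a once-punctured and a twice-punctured disk.)
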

\begin{proof}
We consider the two 3-punctured spheres $\Sigma_{1}$ and $\Sigma_{2}$ 
as shown in Figure~\ref{fig:u3ps-b2}. 
Suppose that another 3-punctured sphere $\Sigma$ intersects $\Sigma_{2}$. 
Consider the intersection of $\Sigma$ and the cusp $C$. 
Proposition~\ref{prop:intersection} implies that 
$\Sigma \cap \Sigma_{2} \cap C$ consists of at most two points. 
If $\Sigma \cap \Sigma_{2} \cap C = \emptyset$, 
then $\Sigma \cap \Sigma_{2} = g_{2}, \Sigma \cap \Sigma_{1} = g_{3}$, 
and $\Sigma \cap C = \emptyset$. 
Otherwise $\Sigma \cap \Sigma_{2} = g_{4} \cup g_{5}$ 
and $\Sigma \cap \Sigma_{1} = \emptyset$. 
In both cases, the ambient 3-manifold $M$ is 
the Borromean rings complement $\mathbb{W}^{\prime}_{2}$. 
Then $X$ is $\Bor$ by Lemma~\ref{lem:bor6}. 

Suppose that $\Sigma_{2}$ is disjoint from 
any other 3-punctured sphere than $\Sigma_{1}$. 
If $\Sigma_{1}$ is also disjoint from any other 3-punctured sphere 
than $\Sigma_{2}$, 
then $X$ is $B_{2}$. 
If $\Sigma_{1}$ intersects another 3-punctured sphere $\Sigma_{3}$, 
there is another 3-punctured sphere $\Sigma_{4}$ 
that is homologous to $\Sigma_{2}$ and intersects only $\Sigma_{3}$. 
We can continue this argument. 
If 3-punctured spheres lies cyclically, 
then $X$ is $\Whi_{2n}$ or $\Whip_{4n}$. 
Otherwise $X$ is $B_{2n}$. 
If $X$ consists of infinitely many 3-punctured spheres, 
then $X$ is $B_{\infty}$ or $\Whi_{\infty}$. 
\end{proof}

\fig[width=6cm]{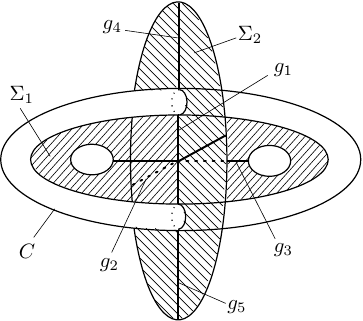}{3-punctured spheres of the type $B_{2}$}

\subsection{The unions without intersection of the type (ii) or (iii)}
\label{subsection:type1}

From now on, 
we consider a component $X$ of the union of 3-punctured spheres 
without intersection of the type (ii) or (iii). 
Let us consider the intersection $L = X \cap C$, 
where $C$ is a cusp. 
For the intersection of a 3-punctured sphere and a cusp, 
we call a component of it a \textit{boundary loop}, 
which is a closed geodesic in the cusp with respect to the Euclidean metric. 
Then $L$ is the union of the boundary loops in the cusp $C$. 

\begin{lem}
\label{lem:boundary}
Let $X$ be a component of the union of 3-punctured spheres 
without intersection of the type (ii) or (iii). 
Then the intersection of two boundary loops consists of at most one point. 
Moreover, each boundary loop intersects other boundary loops at most two points. 
\end{lem}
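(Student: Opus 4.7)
The plan is to combine Proposition~\ref{prop:intersection}---which, together with Lemma~\ref{lem:ss}, forces any two intersecting 3-punctured spheres in $X$ to meet in a single (N,N)-intersection geodesic under our hypothesis---with the elementary fact that a non-separating simple geodesic in a 3-punctured sphere has its two endpoints at two \emph{distinct} punctures.

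For the first claim, I would pick two distinct boundary loops $s_{1}, s_{2} \subset L$. If they come from the same 3-punctured sphere they circle different punctures of $C$ and are disjoint, so I may assume $s_{i}$ is a component of $\Sigma_{i} \cap C$ around a puncture $a_{i}$ of $\Sigma_{i}$ with $\Sigma_{1} \neq \Sigma_{2}$. Either $\Sigma_{1} \cap \Sigma_{2} = \emptyset$ (whence $s_{1} \cap s_{2} = \emptyset$), or $\Sigma_{1} \cap \Sigma_{2}$ is a single (N,N)-geodesic $g$. In the latter case, every point of $s_{1} \cap s_{2}$ lies on $g$ and is an endpoint of $g$ which reaches puncture $a_{1}$ in $\Sigma_{1}$ (and $a_{2}$ in $\Sigma_{2}$); since $g$ is non-separating in $\Sigma_{1}$, its two endpoints lie at two different punctures of $\Sigma_{1}$, so at most one of them can equal $a_{1}$. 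This gives $|s_{1} \cap s_{2}| \leq 1$.

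For the second claim, I would fix a boundary loop $s \subset \Sigma \cap C$ circling a puncture $a$ of $\Sigma$. Any intersection point of $s$ with another boundary loop $s^{\prime} \subset \Sigma^{\prime} \cap C$ sits on the non-separating geodesic $\Sigma \cap \Sigma^{\prime}$ in $\Sigma$ and is its endpoint at $a$. Since $\Sigma$ has exactly three non-separating simple geodesics, one joining each pair of punctures, and exactly two of them pass through $a$, the set of points on $s$ arising in this way has cardinality at most two, no matter how many surfaces $\Sigma^{\prime}$ contribute.

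The only mildly delicate point is that several distinct $\Sigma^{\prime}$ may share the same intersection geodesic with $\Sigma$, producing several distinct boundary loops $s^{\prime}$; but all such $s^{\prime}$ pass through the same point of $s$, so no extra intersection count appears. I do not anticipate any serious obstacle---the whole argument reduces to a careful bookkeeping of endpoints of the non-separating arcs permitted by Proposition~\ref{prop:intersection}.
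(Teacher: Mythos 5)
Your proof is correct and takes essentially the same route as the paper's own (two-sentence) proof: the first claim follows from the type (i)-intersection hypothesis together with the fact that a non-separating simple geodesic ends at two distinct punctures, and the second from the fact that each boundary component of a 3-punctured sphere meets exactly two of the non-separating simple geodesics. Your write-up simply makes the endpoint bookkeeping (including the case of several surfaces sharing one intersection geodesic) explicit.
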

\begin{proof}
The first assertion follows from the assumption that 
any pair of 3-punctured spheres in $X$ has the type (i)-intersection. 
The second assertion follows from the fact that 
each boundary component of a 3-punctured sphere meets 
exactly two non-separating simple geodesics. 
\end{proof}

By Lemma~\ref{lem:boundary}, 
we may assume that the slope of a boundary loop in $L$ is $0,1$, or $\infty$ 
with respect to a choice of meridian and longitude. 
We say that the following types of $L$ are general (Figure~\ref{fig:u3ps-genslope}): 
\begin{itemize}
\item disjoint simple boundary loops, 
\item two boundary loops with one common point, and 
\item three boundary loops with two common points, two of which are parallel. 
\end{itemize}
If $L$ is not general, 
then $L$ contains boundary loops 
that are one of the three special types shown in Figure~\ref{fig:u3ps-spslope}. 
Figures~\ref{fig:u3ps-genslope} and \ref{fig:u3ps-spslope} 
show fundamental domains of the cusp $C$. 

\fig[width=8cm]{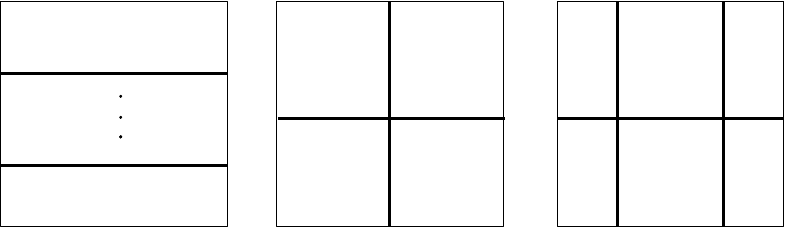}{General types of $L$}

\fig[width=8cm]{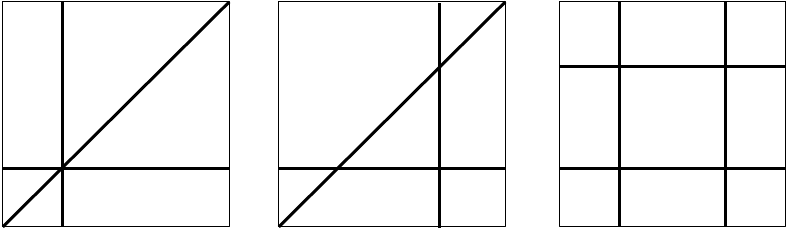}{Special types of $L$}

We first consider special types of $L$.

\begin{lem}
\label{lem:spunion}
Let $X$ and $L$ be as above. 
\begin{itemize}
\item
If $L$ contains three loops of slopes $0,1$, and $\infty$ 
with common intersection, 
then $X$ contains $T_{3}$. 

\item
If $L$ contains three loops of slopes $0,1$, and $\infty$ 
without common intersection, 
then $X$ contains $\Penh$. 

\item
If $L$ contains two pairs of loops of slopes 0 and $\infty$, 
then $X$ contains $\Octh$. 
\end{itemize}

\end{lem}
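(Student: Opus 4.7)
The plan is to pass to the universal cover and use the upper half-space model with $C$ lifted to $\infty$: each 3-punctured sphere in $X$ with a boundary loop at $C$ then lifts to a vertical half-plane reaching $\infty$, and the boundary loop is the projection of the trace of this half-plane on a fixed horosphere at $\infty$. By Proposition~\ref{prop:intersection} together with the absence of type (ii) and type (iii) intersections, any two of these spheres either are disjoint or meet in exactly one (N,N)-geodesic, and in the latter case this geodesic projects to $C$ at the unique crossing of the two boundary loops.

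For the first bullet, I would lift the common intersection point of the three boundary loops to a horosphere point $\tilde{p}$; the three vertical half-planes then all contain the vertical geodesic $\tilde{g}$ above $\tilde{p}$. Its projection $g \subset M$ lies in all three spheres, and since each pair of spheres meets in at most one (N,N)-geodesic we conclude $\Sigma_{i} \cap \Sigma_{j} = g$ for every pair. This is exactly the configuration $T_{3}$.

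For the second and third bullets, the three (respectively four) vertical half-planes pairwise meet in distinct vertical geodesics, whose non-$\infty$ endpoints sit in $\widehat{\mathbb{C}}$ as a triangle (resp.\ a parallelogram with two parallel pairs). The other endpoint of each such geodesic lies at a cusp $C'$ other than $C$. My plan is then to analyse the pattern of boundary loops forced at each such $C'$: the no-type-(ii), no-type-(iii) hypothesis together with Lemma~\ref{lem:boundary} restricts these local patterns sharply, and propagating this analysis cusp by cusp one uncovers the fourth 3-punctured sphere completing the $\Penh$ pattern in Case 2, and the additional spheres completing the $\Octh$ pattern in Case 3.

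The main obstacle is verifying in Cases 2 and 3 that the propagation terminates at exactly $\Penh$ and $\Octh$ rather than yielding a larger or inequivalent configuration. To control this I would appeal to the polyhedral decompositions recalled in Section~\ref{section:description}: the ten regular ideal tetrahedra for $\mathbb{M}_{5}$ and the four regular ideal octahedra for $\mathbb{M}_{6}$. Placing the three or four vertical half-planes in $\mathbb{H}^{3}$ as lifts of faces of these polyhedra, and using the rotational symmetries of $\mathbb{M}_{5}$ and $\mathbb{M}_{6}$ listed in Section~\ref{section:description} to reduce the amount of case checking, the identification of the recovered pattern with $\Penh$ or $\Octh$ as depicted in Figure~\ref{fig:u3ps-tetpenocthat} becomes a finite explicit check.
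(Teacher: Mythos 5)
Your first bullet is essentially fine and matches the paper's (very terse) ``manual combination'' argument: the three half-planes over the common intersection point of the boundary loops all contain the vertical geodesic above it, and Proposition~\ref{prop:intersection} together with the no-(ii)/(iii) hypothesis forces each pair of the three spheres to meet in exactly that geodesic, which is the $T_{3}$ configuration. The other two bullets have genuine gaps. For the second bullet, the missing idea is how the fourth 3-punctured sphere is actually produced: the paper's point is that the regular neighborhood of the union of the three 3-punctured spheres with the cusp $C$ already contains, topologically, another essential 3-punctured sphere, which by Adams's theorem is isotopic to a totally geodesic one; this is a statement about a fixed compact piece and needs no knowledge of the ambient manifold. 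Your ``propagation cusp by cusp'' can only constrain where additional geodesic spheres might sit --- it cannot produce one --- and your fallback of placing the half-planes as faces of the ten regular ideal tetrahedra of $\mathbb{M}_{5}$ (or the four regular ideal octahedra of $\mathbb{M}_{6}$) is not available, because the conclusion is only that $X$ contains $\Penh$ (resp.\ $\Octh$), whose ambient manifold is in general a nontrivial Dehn filling of $\mathbb{M}_{5}$ (resp.\ $\mathbb{M}_{6}$); after filling, the decomposition into regular ideal polyhedra is destroyed.

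For the third bullet you assert that the four half-planes ``pairwise meet in distinct vertical geodesics,'' but two spheres carrying the same slope at $C$ have disjoint boundary loops there, so they do not meet over $C$ at all; the real issue, which your sketch does not address, is whether $\Sigma_{1}$ and $\Sigma_{3}$ (both of slope $0$) can intersect somewhere else in $M$. Ruling this out is exactly Lemma~\ref{lem:oct4}, and it is not a local computation at $C$: one orientation case is excluded by a parity count of boundary-loop intersections in another cusp, while the other produces a $\Whih_{3}$ configuration, forces the ambient manifold to be a Dehn filling of $\mathbb{W}_{3}$, and is then excluded by the homological analysis of Lemma~\ref{lem:whi3}. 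Without this disjointness the claimed $\Octh$ pattern is not established.
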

\begin{proof}
We obtain the asserted containments 
by manually combining 3-punctured spheres. 
In the second case, 
the union of the three 3-punctured spheres with $C$ 
has a regular neighborhood that contains another 3-puncture. 
Moreover, the union of these four 3-punctured spheres is $\Penh$. 
In the last case, 
the assertion follows from Lemma~\ref{lem:oct4}. 
\end{proof}

\begin{lem}
\label{lem:oct4}
Suppose that $L$ contains two pairs of loops of slopes 0 and $\infty$. 
Let $\Sigma_{1}, \dots , \Sigma_{4}$ be 3-punctured spheres in $X$ 
such that $\Sigma_{1} \cap C$ and $\Sigma_{3} \cap C$ are of slope 0, 
and $\Sigma_{2} \cap C$ and $\Sigma_{4} \cap C$ are of slope $\infty$. 
Then $\Sigma_{1} \cap \Sigma_{3} = \emptyset$ 
and $\Sigma_{2} \cap \Sigma_{4} = \emptyset$. 
\end{lem}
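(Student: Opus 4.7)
The plan is to argue by contradiction. By the symmetry of the hypothesis swapping slopes $0$ and $\infty$, it suffices to show $\Sigma_{1} \cap \Sigma_{3} = \emptyset$; the statement $\Sigma_{2} \cap \Sigma_{4} = \emptyset$ follows identically. Assume $\Sigma_{1} \cap \Sigma_{3} \neq \emptyset$. Under the standing hypothesis of Subsection~\ref{subsection:type1} forbidding intersections of type (ii) or (iii), Proposition~\ref{prop:intersection} forces this intersection to be a single (N,N)-arc $g$. I first observe that $g$ has no endpoint on $C$: the loops $s_{1} := \Sigma_{1} \cap C$ and $s_{3} := \Sigma_{3} \cap C$ are two distinct parallel closed geodesics of slope $0$ on the flat torus $C$, hence disjoint, and any endpoint of $g$ on $C$ would have to lie in $s_{1} \cap s_{3} = \emptyset$.

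Next I pin down $g$ using the rigid non-separating-arc structure of a 3-punctured sphere. Each slope-$\infty$ loop $t_{j} = \Sigma_{j} \cap C$ (for $j=2,4$) meets $s_{1}$ transversely in one point on $C$, so $\Sigma_{1} \cap \Sigma_{2}$ and $\Sigma_{1} \cap \Sigma_{4}$ are nonempty, hence single (N,N)-arcs $g_{12}, g_{14}$. A 3-punctured sphere admits exactly three pairwise disjoint non-separating ideal arcs, one between each pair of boundary components. The arcs $g, g_{12}, g_{14}$ in $\Sigma_{1}$ are distinct: $g \neq g_{12}, g_{14}$ because $g$ has no $C$-endpoint while $g_{12}, g_{14}$ do, and $g_{12} \neq g_{14}$ because otherwise $\Sigma_{2} \cap \Sigma_{4}$ would contain this common arc, and the resulting nonempty $\Sigma_{2} \cap \Sigma_{4}$ would inherit a boundary endpoint on $C$ forbidden by $t_{2} \cap t_{4} = \emptyset$. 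Hence $g$ joins the two non-$C$ boundary components $s_{1}', s_{1}''$ of $\Sigma_{1}$, on cusps $C_{\alpha}, C_{\beta}$ (possibly equal). The same analysis applied to $\Sigma_{3}$ places its two non-$C$ boundaries on $C_{\alpha}, C_{\beta}$, and parallel analyses applied to $\Sigma_{2}$ and $\Sigma_{4}$ (using $g_{12}, g_{32}$ and $g_{14}, g_{34}$ respectively as two of their three non-separating arcs) determine the cusps on which their remaining boundaries sit. The result is a rigid combinatorial configuration, up to a small number of subcases depending on whether $C_{\alpha} = C_{\beta}$ and how the remaining boundaries of $\Sigma_{2}, \Sigma_{4}$ distribute over $\{C_{\alpha}, C_{\beta}\}$.

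The final step is to derive a contradiction from this rigid configuration by combining the incidence data on $C_{\alpha}$ with Lemma~\ref{lem:boundary} (bounding each boundary loop to at most two intersection points with other loops) and Lemma~\ref{lem:parity} (forcing opposite signs at the two endpoints of each intersection arc). The four intersection signs on $C$ are forced to agree, coming from oriented intersections of slope-$0$ with slope-$\infty$ loops in the fixed orientation of the cusp; these signs propagate through $g_{12}, g_{14}, g_{32}, g_{34}$ to rigid sign constraints at their far endpoints on $C_{\alpha}$. Computing the sign of the endpoint $s_{1}' \cap s_{3}'$ of $g$ on $C_{\alpha}$ in two ways—first from the propagated signs of the other endpoints together with the two-slope-class decomposition of $C_{\alpha}$ forced by Lemma~\ref{lem:boundary}, and second by applying Lemma~\ref{lem:parity} directly to $g$—yields opposite values. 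The principal obstacle is exactly this sign computation: the preceding incidence combinatorics closes up consistently at the level of counts and slopes, so the contradiction is orientational rather than topological, and it requires careful bookkeeping of the induced orientations on each boundary loop, with the subcase $C_{\alpha} = C_{\beta}$ (where $\Sigma_{1}$ has two boundary components on a single cusp) demanding particular attention.
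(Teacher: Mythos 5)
Your opening reductions are sound and track the paper's own first moves: under the standing no-type-(ii)/(iii) hypothesis the intersection $\Sigma_1\cap\Sigma_3$ is a single (N,N) geodesic, it cannot end on $C$ because $s_1$ and $s_3$ are disjoint parallel loops, and it must therefore be the non-separating arc of $\Sigma_1$ joining its two other boundary components. A loops-on-a-common-cusp contradiction of the kind you sketch does dispose of one of the two orientation cases at the intersection (this is exactly the paper's ``left case'' in Figure~\ref{fig:u3ps-oct4}, killed by an incompatible pattern of empty and non-empty intersections among four loops on one cusp).

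The gap is the step you yourself flag as the principal obstacle: the assertion that the remaining case also falls to sign bookkeeping via Lemmas~\ref{lem:parity} and~\ref{lem:boundary}, i.e.\ that ``the contradiction is orientational rather than topological.'' It is not. In the other orientation case the three spheres $\Sigma_1,\Sigma_2,\Sigma_3$ form the configuration $\Whih_3$, which is genuinely realizable --- it occurs in $\mathbb{W}_3$ and in its hyperbolic Dehn fillings --- so every local orientation and incidence constraint among these spheres and their boundary loops is satisfiable, and no sign contradiction is available from cusp cross-sections. What fails in that case is the existence of the fourth sphere $\Sigma_4$, and that is a global fact: the ambient manifold is a Dehn filling on a cusp of $\mathbb{W}_3$, and Lemma~\ref{lem:whi3} shows that such a hyperbolic filling (other than $\mathbb{M}_3$ and $\mathbb{W}_3$ themselves, which are excluded by the standing hypothesis since their extra spheres have type (ii) or (iii) intersections) carries exactly the three 3-punctured spheres of $\Whih_3$. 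That lemma is proved by computing $H_2(\mathbb{W}_3,\partial\mathbb{W}_3;\mathbb{R})$, enumerating the possible boundary classes $[\partial\Sigma^{\prime}]$ of a hypothetical extra sphere (which may pass through the filled solid torus as an $n$-punctured sphere capped by disks), and bounding geometric intersection numbers of boundary slopes --- information your purely local analysis on $C$, $C_{\alpha}$, $C_{\beta}$ cannot access. Without importing this global classification (or an equivalent argument), your proof cannot close.
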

\begin{proof}
Assume that two 3-punctured spheres $\Sigma_{1}$ and $\Sigma_{3}$ 
intersect at a geodesic $g_{5}$. 
There are two cases depending on orientations of geodesics 
at the intersection as shown in Figure~\ref{fig:u3ps-oct4}. 

\fig[width=12cm]{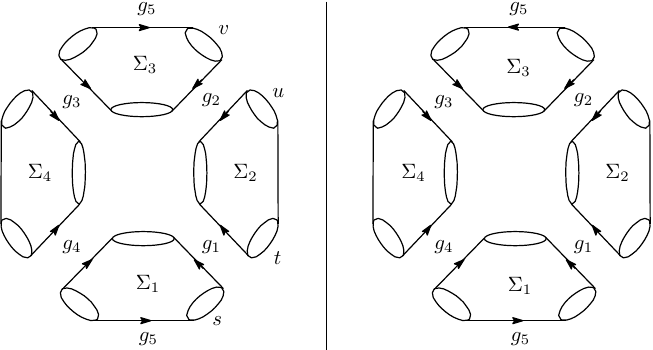}{Orientations of geodesics in four 3-punctured spheres}

In the left case, 
four loops $s,t,u$, and $v$ are contained in a common cusp. 
Then $s \cap u = \emptyset$ and $t \cap u = \emptyset$, 
whereas $s \cap t \neq \emptyset$. 
This is impossible. 

In the right case, 
the union of $\Sigma_{1}, \Sigma_{2}$, and $\Sigma_{3}$ is $\Whih_{3}$. 
Hence the ambient 3-manifold $M$ is obtained by a Dehn filling 
on a cusp of $\mathbb{W}_{3}$. 
Then the proof is completed by Lemma~\ref{lem:whi3}. 
\end{proof}

\begin{lem}
\label{lem:whi3}
Let $M$ be a hyperbolic 3-manifold 
obtained by a (non-empty) Dehn filling 
on the cusp $C_{1}$ of $\mathbb{W}_{3}$. 
Suppose that $M \neq \mathbb{M}_{3}$. 
Then $M$ has exactly the three 3-punctured spheres 
of the type $\Whih_{3}$. 
\end{lem}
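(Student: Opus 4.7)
The plan is to follow closely the template of Lemmas~\ref{lem:chain2} and~\ref{lem:tet2}: first describe the Thurston norm ball of $\mathbb{W}_{3}$, then use the volume drop of the Dehn filling together with Corollary~\ref{cor:disjoint} and homological/intersection constraints to rule out any additional 3-punctured sphere. Throughout I will use the cyclic $\mathbb{Z}/3$-symmetry of $\mathbb{W}_{3}$ permuting the three spheres of $\Whih_{3}$ and the three remaining blue spheres of $\Whi_{6}$.

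First I would identify $H_{2}(\mathbb{W}_{3},\partial\mathbb{W}_{3};\mathbb{R})\cong\mathbb{R}^{4}$ and let $\Sigma_{1},\Sigma_{2},\Sigma_{3}$ be the three 3-punctured spheres of $\Whih_{3}$ with $\Sigma_{4},\Sigma_{5},\Sigma_{6}$ the three blue spheres completing $\Whi_{6}$. Since each $\Sigma_{i}$ is essential we have $\|[\Sigma_{i}]\|=1$, so the twelve classes $\pm[\Sigma_{i}]$ lie on the boundary of the unit Thurston norm ball; as in Lemmas~\ref{lem:bor6}--\ref{lem:tet8} I would argue by convexity, checking a few midpoint/sum norms explicitly (or using fibered faces), that the ball is exactly the convex hull of the $\pm[\Sigma_{i}]$. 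It follows that any embedded totally geodesic 3-punctured sphere in $\mathbb{W}_{3}$ represents one of the twelve classes $\pm[\Sigma_{i}]$, and hence (by Lemma~\ref{lem:homologous} and Corollary~\ref{cor:disjoint}, using $\vol(\mathbb{W}_{3})=3V_{oct}$) the six spheres $\Sigma_{1},\dots,\Sigma_{6}$ are all the 3-punctured spheres in $\mathbb{W}_{3}$.

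Now let $M$ be a nonempty Dehn filling on the cusp $C_{1}$ of $\mathbb{W}_{3}$ of slope $pm_{1}+ql_{1}$, with $M\neq\mathbb{M}_{3}$. Since $\Sigma_{1},\Sigma_{2},\Sigma_{3}$ are disjoint from $C_{1}$, they descend to three 3-punctured spheres of the type $\Whih_{3}$ in $M$. Thurston's volume inequality gives $\vol(M)<3V_{oct}$, so by Corollary~\ref{cor:disjoint} $M$ contains at most two disjoint 3-punctured spheres. Assume for contradiction that $M$ contains a fourth 3-punctured sphere $\Sigma$. Lift $\Sigma$ back to $\mathbb{W}_{3}$ as a properly embedded $n$-punctured sphere $\Sigma'$ together with $n-3$ meridian disks in the surgery solid torus, so that the components of $\partial\Sigma'$ on $C_{1}$ are all of slope $pm_{1}+ql_{1}$. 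Write $[\Sigma']=ax_{1}+bx_{2}+cx_{3}+dx_{4}\in H_{2}(\mathbb{W}_{3},\partial\mathbb{W}_{3};\mathbb{Z})$ in a convenient basis read off from the diagram of $\mathbb{W}_{3}$ in Figure~\ref{fig:u3ps-whibor}. From this one computes the slope of $\partial\Sigma'$ on each cusp linearly in $(a,b,c,d)$; matching the $C_{1}$-slope to $pm_{1}+ql_{1}$ gives a linear relation between the coordinates. Applying Proposition~\ref{prop:intersection} to the pairs $(\Sigma,\Sigma_{i})$ for $i=1,2,3$, the boundary loops intersect in at most two points per shared cusp, which translates, exactly as in Lemma~\ref{lem:tet2}, into a short list of determinant inequalities $|\cdot|\leq 2$ bounding the coordinates.

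Step four, the case analysis, is where I expect the main difficulty. For each solution $(a,b,c,d)$ of the inequalities I would verify that either $\Sigma$ is forced to be disjoint from two of $\Sigma_{1},\Sigma_{2},\Sigma_{3}$ (yielding three disjoint 3-punctured spheres in $M$, contradicting the volume bound), or else the parity of the coordinates forces $n$ to be incompatible with a 3-punctured sphere, or finally the forced filling slope $p/q$ coincides with one of the finitely many slopes producing $\mathbb{M}_{3}$ (which we have excluded by hypothesis) or a non-hyperbolic manifold (which we have excluded by assuming $M$ is hyperbolic). The principal obstacle compared with Lemma~\ref{lem:tet2} is that $\mathbb{W}_{3}$ has only a $\mathbb{Z}/3$-symmetry, so one cannot cut down as many cases by symmetry; the check reduces to a finite, but slightly tedious, table of $(a,b,c,d)$ that I would organize by the value of $(b-d,a)$ or the analogous pair governing the $C_{1}$-slope, exactly in the spirit of the final paragraph of the proof of Lemma~\ref{lem:tet2}.
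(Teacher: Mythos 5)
Your overall template (homology classes, boundary slopes, intersection-number bounds, volume) is the right one, but several of your specific steps would not close the argument as stated. First, the Thurston-norm computation for $\mathbb{W}_{3}$ is a detour that does not feed into the lemma: a 3-punctured sphere in the filled manifold $M$ lifts to an $n$-punctured sphere $\Sigma^{\prime}$ in $\mathbb{W}_{3}$ of norm $n-2$, so knowing the integral norm-one classes of $\mathbb{W}_{3}$ constrains nothing about $\Sigma^{\prime}$; the paper omits this step entirely. The decisive observation you do not isolate is that, in the basis given by the four spheres of Figure~\ref{fig:u3ps-whi3}, the boundary class of $ax+by+cz+dw$ on the filled cusp $C_{1}$ is the pure longitude multiple $al_{1}$. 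Hence $a\neq 0$ would force the filling slope to be $(0,1)$, which is not hyperbolic, so $a=0$: the filling slope $(p,q)$ disappears from the analysis completely, and your anticipated ``linear relation between the coordinates'' and the slope-by-slope case division modeled on Lemma~\ref{lem:tet2} never arise. Second, your intersection bound is too weak: the paper uses that two boundary loops meet in at most \emph{one} point, not two, because a type (iii) intersection in $M$ would put the union of all 3-punctured spheres among the types listed after Lemma~\ref{lem:type3}, none of which contains $\Whih_{3}$ (this is where the hypothesis $M\neq\mathbb{M}_{3}$ enters). With your bound $\leq 2$ you retain classes such as $(b,c,d)=(1,1,1)$, which pass your parity and slope tests and would have to be excluded by a separate count of total intersection points --- work your proposal defers.

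The more serious flaw is your closing contradiction. You propose that a surviving class forces $\Sigma$ to be disjoint from two of the spheres of $\Whih_{3}$, ``yielding three disjoint 3-punctured spheres.'' But the three spheres of $\Whih_{3}$ are placed cyclically and pairwise intersect, so disjointness of $\Sigma$ from two of them never produces three \emph{pairwise} disjoint spheres, and Corollary~\ref{cor:disjoint} gives no contradiction. The paper's endgame is different: the sharpened inequalities $|b|,|c|,|d|,|b+c|,|b+d|,|c+d|\leq 1$ together with the parity of $b+c+d$ force $(b,c,d)$ to be $\pm$ a standard basis vector, so $\Sigma$ is homologous in $M$ to one of the three known spheres, hence disjoint from it by Lemma~\ref{lem:homologous}; but then the boundary slopes of $\Sigma$ coincide with those of that sphere, so $\Sigma$ meets each of the other two along the very geodesic in which the homologous sphere meets them --- a geodesic contained in the sphere from which $\Sigma$ is supposed to be disjoint. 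You would need to supply this (or an equivalent) final step.
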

\begin{proof}
We orient the meridians $m_{i}$ and the longitudes $l_{i}$ 
of the cusps of $\mathbb{W}_{3}$ 
as shown in Figure~\ref{fig:u3ps-whi3}. 
The 3-punctured spheres 
$\Sigma_{1}, \Sigma_{2}, \Sigma_{3}$, and $\Sigma_{4}$ 
respectively represent classes $x,y,z$, and $w$ that form a basis of 
$H_{2}(\mathbb{W}_{3}, \partial \mathbb{W}_{3}; \mathbb{R}) 
\cong \mathbb{R}^{4}$. 
Assume that $M$ has another 3-punctured sphere $\Sigma$ 
than $\Sigma_{2}, \Sigma_{3}$, or $\Sigma_{4}$. 
Then we may assume that $\Sigma$ is the union of 
an $n$-punctured sphere $\Sigma^{\prime}$ in $\mathbb{W}_{3}$ 
and $(n-3)$ essential disks in the filled solid torus. 
Suppose that $\Sigma^{\prime}$ represents 
$ax+by+cz+dw \in H_{2}(\mathbb{W}_{3}, \partial \mathbb{W}_{3}; \mathbb{R})$ 
for $a,b,c,d \in \mathbb{Z}$. 
Then 
\[ [\partial \Sigma^{\prime}] 
= al_{1}+((-c-d)m_{2}+bl_{2})+((-b-d)m_{3}+cl_{3})+((-b-c)m_{4}+dl_{4}). 
\]
If $a \neq 0$, then $M$ is obtained by the $(0,1)$-Dehn filling, 
which is not hyperbolic. 
Hence we have $a=0$, and $b+c+d$ is odd. 
Since we exclude $\mathbb{M}_{3}$, 
two components of $\partial \Sigma$ and $\partial \Sigma_{i}$ 
for $i=2,3,4$ intersect in at most one point. 
Hence we have 
\[| b | \leq 1, \ | c | \leq 1, \ | d | \leq 1, \
| c+d | \leq 1, \ | b+d | \leq 1, \ | b+c | \leq 1.
\] 
These inequalities imply that 
$(b,c,d)=(\pm 1,0,0), (0,\pm 1,0)$, or $(0,0, \pm 1)$. 

Therefore we may assume that $\Sigma$ is 
homologous to $\Sigma_{2}$ in $M$. 
Lemma~\ref{lem:homologous} implies that 
$\Sigma \cap \Sigma_{2} = \emptyset$, 
but we have $\Sigma \cap \Sigma_{3} = \Sigma_{2} \cap \Sigma_{3}$ 
and $\Sigma \cap \Sigma_{4} = \Sigma_{2} \cap \Sigma_{4}$. 
This is impossible. 
\end{proof}

\fig[width=8cm]{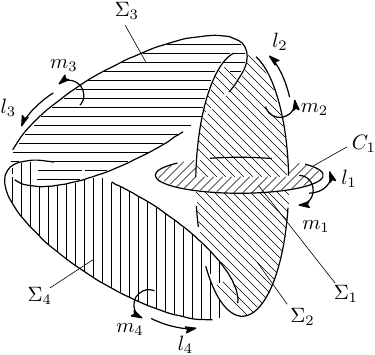}{Four 3-punctured spheres in $\mathbb{W}_{3}$}

Based on Lemma~\ref{lem:spunion}, 
we classify the unions $X$ that contain special types of $L$. 
We recall that two 3-punctured spheres of $X$ intersect in 
at most one geodesic. 

\begin{lem}
\label{lem:specialcusp}
Let $X$ be a component of the union of the 3-punctured spheres 
without intersection of the type (ii) or (iii) 
in Proposition~\ref{prop:intersection}. 
\begin{itemize} 
\item If $X$ contains $T_{3}$, 
then $X$ is $T_{3}$ or $\Pen$. 
\item If $X$ contains $\Penh$, 
then $X$ is $\Penh$ or $\Pen$. 
\item If $X$ contains $\widehat{Oct}_{4}$, 
then $X$ is $\Octh, \Oct$, or $\Pen$. 
\end{itemize}
\end{lem}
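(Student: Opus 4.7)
The proof handles the three containments in parallel, mirroring the structure of the determining-type lemmas~\ref{lem:bor6}, \ref{lem:whi4}, \ref{lem:tet8} and the almost-determining type Lemma~\ref{lem:tet2}. In each case we identify the regular neighborhood of the given sub-configuration together with its adjacent cusps, as already described in Section~\ref{section:description}, and use this to pin down the ambient hyperbolic 3-manifold $M$ up to a Dehn-filling parameter. Then we enumerate the totally geodesic 3-punctured spheres of $M$ by a Thurston-norm computation together with Corollary~\ref{cor:disjoint} and Lemma~\ref{lem:homologous}, and check that the resulting union is either the given sub-configuration or one of the enlargements allowed by the statement.

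For $X \supset T_{3}$, the regular neighborhood is $\mathbb{T}_{3}$, obtained by cutting $\mathbb{M}_{5}$ along a 3-punctured sphere. In $M$, the two totally geodesic 3-punctured sphere boundary components of $\mathbb{T}_{3}$ must be identified by an isometry, yielding one of finitely many candidates for $M$. When the gluing recovers $\mathbb{M}_{5}$, the Thurston-norm classification of 3-punctured spheres in $\mathbb{M}_{5}$ (patterned on Lemma~\ref{lem:tet8}) yields the ten spheres of $\Pen$, so $X = \Pen$. For any other gluing, the image of the identified boundary is disjoint from the three spheres of $T_{3}$ and hence lies in a separate component of the union of 3-punctured spheres: any identification creating an intersection with a $T_{3}$ sphere would, by Lemma~\ref{lem:parity} applied at the cusps of the common geodesic, force a type-(ii) or type-(iii) intersection, violating our standing hypothesis. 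In this case $X = T_{3}$.

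For $X \supset \Penh$ (resp.\ $X \supset \Octh$), Section~\ref{section:description} identifies the regular neighborhood with $\mathbb{M}_{5}$ minus two balls (resp.\ $\mathbb{M}_{6}$), so that $M$ is obtained from $\mathbb{M}_{5}$ (resp.\ $\mathbb{M}_{6}$) by a Dehn filling on one cusp. A Dehn-surgery analysis parallel to Lemma~\ref{lem:tet2} --- writing a putative additional 3-punctured sphere as the union of a planar surface in $\mathbb{M}_{n}$ with compressing disks in the filled solid torus, expanding its boundary homology in meridian-longitude coordinates, and applying the at-most-two intersection inequality from Proposition~\ref{prop:intersection} with each of the $\Pen$ (resp.\ $\Oct$) spheres not meeting the filled cusp --- shows that the only fillings producing an extra 3-punctured sphere are the trivial filling (giving $M = \mathbb{M}_{5}$ or $\mathbb{M}_{6}$ and $X = \Pen$ or $X = \Oct$ respectively) and, in the $\Octh$ case, the exceptional fillings collapsing $\mathbb{M}_{6}$ onto $\mathbb{M}_{5}$ (giving $X = \Pen$). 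All other fillings yield $X = \Penh$ or $X = \Octh$ as required.

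The main obstacle is the Dehn-surgery enumeration for $X \supset \Octh$: the extra cusp of $\mathbb{M}_{6}$ relative to $\mathbb{M}_{4}$ in Lemma~\ref{lem:tet2} and the larger isometry group together mean that the system of integer inequalities bounding the homological coefficients of a candidate extra sphere is larger. However, the action of $\mathrm{Isom}(\mathbb{M}_{6})$, which is transitive on the eight spheres of $\Oct$ (as observed in Section~\ref{section:description}), permits reduction to a single representative filled cusp and a single representative candidate sphere, after which the analysis follows Lemma~\ref{lem:tet2} case by case.
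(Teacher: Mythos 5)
There is a genuine gap, concentrated in your first bullet. You claim that in any gluing other than the one recovering $\mathbb{M}_{5}$, a sphere meeting a $T_{3}$ sphere would ``by Lemma~\ref{lem:parity} \dots force a type-(ii) or type-(iii) intersection.'' This is false: type (i) intersections of an additional sphere with the $T_{3}$ spheres are perfectly consistent with Lemma~\ref{lem:parity}, and indeed occur in $\mathbb{M}_{5}$ itself, where the remaining seven spheres of $\Pen$ meet the three spheres of $T_{3}$ in (N,N) geodesics. Your framework is also off: the ambient manifold is not obtained by ``identifying the two boundary components of $\mathbb{T}_{3}$ by an isometry'' --- those two totally geodesic boundary spheres become isolated 3-punctured spheres of $M$ and may bound an arbitrary further piece, so there is no finite list of candidates for $M$. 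The paper's actual mechanism is different: if $X \supsetneq T_{3}$, the boundary-loop pattern at a cusp becomes ``three loops of slopes $0,1,\infty$ without common intersection,'' so $X \supset \Penh$ by Lemma~\ref{lem:spunion} and $M$ is a Dehn filling of $\mathbb{M}_{5}$; then the \emph{volume} inequality $\vol(M) \geq \vol(\mathbb{T}_{3}) = \vol(\mathbb{M}_{5})$, against the strict volume decrease under Dehn filling, forces the filling to be empty and $X = \Pen$. This volume step is the crux of the first two bullets and is absent from your argument. (The second bullet then reduces to the first, since $X \supsetneq \Penh$ contains $T_{3}$; no separate surgery analysis is needed.)

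Two further points. First, the statement that $\mathbb{M}_{5}$ has exactly the ten spheres of $\Pen$ does not need a Thurston-norm computation (which in $H_{2} \cong \mathbb{R}^{5}$ would be substantial and which you do not carry out): each cusp of $\mathbb{M}_{5}$ already carries six boundary loops of slopes $0,1,\infty$, so an eleventh sphere would violate the two-point bound of Lemma~\ref{lem:boundary}. Second, for the $\Octh$ case your plan of reproducing the full integer-inequality enumeration of Lemma~\ref{lem:tet2} over $\mathbb{Z}^{6}$ is not executed and would be unwieldy even after using the isometry group; the paper instead observes that an extra sphere must be disjoint from $C_{4}$ and meet each of $C_{2},C_{3},C_{5},C_{6}$ in loops of a single slope (else $X$ would contain $T_{3}$ or $\Penh$ and hence be $\Pen$), whereupon the total number of boundary components away from the filled cusp is simultaneously odd (it is $1$ or $3$) and congruent to the even integer $2(a_{5}+a_{6})$, a parity contradiction. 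You would need to supply either this observation or a complete case analysis to close the third bullet.
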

\begin{proof}
We first show that the manifold $\mathbb{M}_{5}$ 
has exactly the ten 3-punctured spheres of $\Pen$. 
The intersection of the 3-punctured spheres of $\Pen$ 
and each cusp consists of six boundary loops 
of slopes $0,1$, and $\infty$. 
If $\mathbb{M}_{5}$ has another 3-punctured sphere, 
a cusp contains a more boundary loop. 
This contradicts Lemma~\ref{lem:boundary}. 
Therefore $\mathbb{M}_{5}$ has no other 3-punctured spheres. 

Suppose that $X$ contains $T_{3}$ 
and another 3-punctured sphere. 
Then the intersection of $X$ and a cusp 
contains three loops of slopes $0,1$ and $\infty$ 
without common intersection. 
Hence $X$ contains $\Penh$. 
The ambient 3-manifold is obtained by a possibly empty Dehn filling 
on a cusp of $\mathbb{M}_{5}$. 
Since the ambient 3-manifold contains 3-punctured spheres of $T_{3}$, 
its volume is at least $\vol (\mathbb{T}_{3}) = \vol (\mathbb{M}_{5})$. 
Therefore the ambient 3-manifold is $\mathbb{M}_{5}$, 
and $X$ is $\Pen$. 

Suppose that $X$ contains $\Penh$ and another 3-punctured sphere. 
Since $X$ also contains $T_{3}$, the union $X$ is $\Pen$. 

Suppose that $X$ contains $\Octh$. 
The ambient 3-manifold is obtained by a possibly empty Dehn filling 
on a cusp of $\mathbb{M}_{6}$. 
The manifold $\mathbb{M}_{6}$ has 
exactly the eight 3-punctured spheres of the type $Oct_{8}$, 
for otherwise $X$ contains $T_{3}$ and $\Penh$. 

\fig[width=8cm]{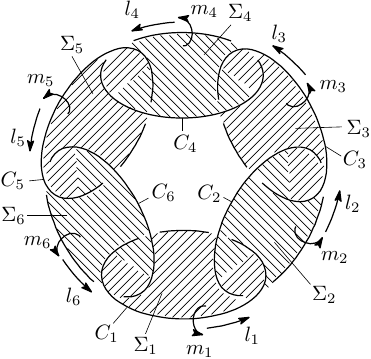}{Six 3-punctured spheres in $\mathbb{M}_{6}$}

Let $M$ be a hyperbolic 3-manifold obtained by a Dehn filling 
on the cusp $C_{1}$ of $\mathbb{M}_{6}$. 
Assume that $M$ is not $\mathbb{M}_{5}$, 
and has another 3-punctured sphere $\Sigma$ 
than the ones of $\Octh$. 
We orient the meridians $m_{i}$ and the longitudes $l_{i}$ 
of the cusps of $\mathbb{M}_{6}$ 
as shown in Figure~\ref{fig:u3ps-oct8}. 
The 3-punctured spheres $\Sigma_{1}, \dots , \Sigma_{6}$ 
respectively represent 
classes $x_{1}, \dots , x_{6}$ that form a basis of 
$H_{2}(\mathbb{M}_{6}, \partial \mathbb{M}_{6}; \mathbb{R}) 
\cong \mathbb{R}^{6}$. 
We may assume that $\Sigma$ is the union 
of an $n$-punctured sphere $\Sigma^{\prime}$ in $\mathbb{M}_{6}$ 
and $(n-3)$ essential disks in the filled solid torus. 
Suppose that $\Sigma^{\prime}$ represents 
$a_{1}x_{1 }+ \dots + a_{6}x_{6} \in 
H_{2}(\mathbb{M}_{6}, \partial \mathbb{M}_{6}; \mathbb{R})$ 
for $a_{1}, \dots , a_{6} \in \mathbb{Z}$. 
Then 
\begin{align*}
[\partial \Sigma^{\prime}] 
&= ((-a_{6}+a_{2})m_{1}+a_{1}l_{1}) 
+((a_{1}-a_{3})m_{2}+a_{2}l_{2}) 
+((-a_{2}+a_{4})m_{3}+a_{3}l_{3}) \\
&+((a_{3}-a_{5})m_{4}+a_{4}l_{4}) 
+((-a_{4}+a_{6})m_{5}+a_{5}l_{5})
+((a_{5}-a_{1})m_{6}+a_{6}l_{6}).
\end{align*} 
The 3-punctured sphere $\Sigma$ is disjoint from the cusp $C_{4}$, 
for otherwise $X$ is $\Pen$. 
By the same reason, 
each $\Sigma \cap C_{i}$ for $i=2,3,5,6$ 
is a multiple of $m_{i}$ or $l_{i}$. 
Since $\Sigma$ is a 3-punctured sphere, 
we have  
\[
| a_{1}-a_{3} | + | a_{2} | 
+ | -a_{2}+a_{4} | + | a_{3} | 
+ | -a_{4}+a_{6} | + | a_{5} |
+ | a_{5}-a_{1} | + | a_{6} | = 1 \, \text{or} \, 3.
\] 
However, 
\[
(a_{1}-a_{3}) + a_{2} 
+ (-a_{2}+a_{4}) + a_{3} 
+ (-a_{4}+a_{6}) + a_{5} 
+ (a_{5}-a_{1}) + a_{6} = 2(a_{5}+a_{6})
\] 
is even. 
This is impossible. 
\end{proof}

Thus we have classified the unions that contain special boundary loops. 
In the remaining cases, 
the intersection $L$ is general (Figure~\ref{fig:u3ps-genslope}). 
Note that if $L$ consists three boundary loops with two common points, 
the two parallel loops in $L$ are contained in distinct 3-punctured spheres.

\begin{lem}
\label{lem:trivalent}
Suppose that the intersection of $X$ and any cusp 
is general. 
If $X$ contains 3-punctured sphere $\Sigma$ 
that intersects three other 3-punctured spheres, 
then $X$ is $T_{4}$. 
\end{lem}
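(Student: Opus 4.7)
The plan has four steps. Set $g_i = \Sigma \cap \Sigma_i$ for $i=1,2,3$; by Proposition~\ref{prop:intersection} and the absence of type (ii) intersections in this subsection, each $g_i$ is a single non-separating arc in $\Sigma$. First, I claim the $g_i$ are pairwise distinct. If, say, $g_1 = g_2$ as geodesic arcs in $\Sigma$, then $\Sigma, \Sigma_1, \Sigma_2$ would share a common geodesic, and at an endpoint of that geodesic three boundary loops would meet at a single point of a cusp --- a $T_3$-style special configuration by Lemma~\ref{lem:specialcusp}, contradicting generality. Since a 3-punctured sphere has exactly three non-separating simple geodesic arcs (one between each pair of boundary components), the arcs $g_1, g_2, g_3$ connect the three distinct pairs. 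Label the cusps of $\Sigma$ in $M$ as $C_1, C_2, C_3$, with $\partial_a \subset C_a$, so that $g_i$ joins $C_j$ and $C_k$ when $\{i,j,k\}=\{1,2,3\}$.

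Next, I analyze each cusp $C_a$. The arcs $g_j, g_k$ with an endpoint on $\partial_a$ yield transverse intersections of $\partial_a$ with one boundary loop $\lambda_j$ of $\Sigma_j$ and one boundary loop $\lambda_k$ of $\Sigma_k$, one point apiece. By Lemma~\ref{lem:boundary}, these two points are the only intersections of $\partial_a$ with other boundary loops in $C_a$. The three loops $\{\partial_a, \lambda_j, \lambda_k\}$ therefore have exactly two common points, and generality of $X \cap C_a$ forces $\lambda_j$ and $\lambda_k$ to be parallel inside $C_a$.

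Now I show $\Sigma_i \cap \Sigma_j = \emptyset$ for $i \neq j$. Suppose otherwise, so that the intersection is a single (N,N)-arc $h$. Let $D_i$ denote the ``third cusp'' of $\Sigma_i$, distinct from the two cusps of $\Sigma$ touching $g_i$. If $D_i = C_a$ for some $a$, then $\partial_a$ would acquire a third intersection via the extra boundary loop of $\Sigma_i$, contradicting Lemma~\ref{lem:boundary}; hence $D_i \notin \{C_1, C_2, C_3\}$. The common cusps of $\Sigma_i$ and $\Sigma_j$ therefore reduce to the cusp $C_a$ at the shared endpoint of $g_i$ and $g_j$, together with $D_i$ if $D_i = D_j$. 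In $C_a$ the loops $\lambda_i$ and $\lambda_j$ are parallel by Step 2, so $h$ has no endpoint there. If instead both endpoints of $h$ lie in $D_i = D_j$, the two boundary loops there intersect twice, a configuration absent from the general list for $L$ and hence again forbidden by the hypothesis. Therefore $\Sigma_1, \Sigma_2, \Sigma_3$ are pairwise disjoint.

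Finally, suppose $X$ contains an additional 3-punctured sphere $\Sigma_4$. The three non-separating arcs of $\Sigma$ are exhausted; matching $\Sigma_4 \cap \Sigma$ with some $g_i$ would reproduce the forbidden $T_3$-style configuration. Hence $\Sigma_4 \cap \Sigma = \emptyset$, and since $X$ is connected, $\Sigma_4$ meets some $\Sigma_i$ along an (N,N)-arc with an endpoint in a cusp $C$. A slope analysis in $C$ using Lemma~\ref{lem:boundary} and the parallelism from Step 2 forces the boundary loop of $\Sigma_4$ in $C$ to lie in the same isotopy class as $\partial_a$, producing four loops in two parallel pairs --- an $\Octh$-type special configuration, again violating generality. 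Thus $X = \Sigma \cup \Sigma_1 \cup \Sigma_2 \cup \Sigma_3$, whose intersection pattern (a tripod centered at $\Sigma$) identifies it as $T_4$. The main obstacle I expect is Step 3, a careful enumeration of the possible placements for a hypothetical intersection arc between $\Sigma_i$ and $\Sigma_j$, where all coincidences of cusps must be excluded by combining Lemma~\ref{lem:boundary} with the generality of $L$.
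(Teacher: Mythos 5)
Your argument is correct and takes essentially the same route as the paper's much terser proof: the decisive point in both is that any additional 3-punctured sphere would force a fourth boundary loop into one of the three cusps meeting $\Sigma$, where three mutually intersecting loops already exhaust the general configurations. Your Steps 1--3 simply spell out what the paper dismisses with ``Clearly $X$ contains $T_4$''; the one small quibble is that in Step 3 the exclusion $D_i \notin \{C_1,C_2,C_3\}$ should be justified by the generality hypothesis (a fourth loop in a cusp that already has intersecting loops is never general) rather than by Lemma~\ref{lem:boundary}, since the extra loop of $\Sigma_i$ need not meet $\partial_a$ at all.
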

\begin{proof}
Clearly $X$ contains $T_{4}$. 
Let $C_{1}, C_{2}$, and $C_{3}$ denote the cusps 
meeting the 3-punctured sphere $\Sigma$. 
Assume that there is another 3-punctured sphere $\Sigma$ 
than the ones of $T_{4}$. 
Then $\Sigma$ intersects at least one of the three cusps 
$C_{1}, C_{2}$, and $C_{3}$. 
This contradicts the assumption 
that the intersection of $X$ and a cusp is general. 
\end{proof}

In the last remaining cases, 
the 3-punctured spheres in $X$ are placed linearly or cyclically. 
Suppose that $X$ consists of finitely many 3-punctured spheres. 
If the 3-punctured spheres in $X$ are placed linearly, 
then $X$ is $A_{n}$. 
If the 3-punctured spheres in $X$ are placed cyclically, 
then $X$ is $\Whih_{n}$ or $\Whiph_{2n}$. 

Thus we complete the proof of Theorem~\ref{thm:main}.

\section{Volume and number of 3-punctured spheres}
\label{section:volume}
As an application of Theorem~\ref{thm:main}, 
we estimate the number of 3-punctured spheres 
in a hyperbolic 3-manifold by its volume. 
We recall that if a hyperbolic 3-manifold $M$ contains 
$n$ disjoint 3-punctured spheres, then $\vol (M) \geq n V_{oct}$ 
by Corollary~\ref{cor:disjoint}.

\begin{thm}
\label{thm:3psvsvol}
Suppose that an orientable hyperbolic 3-manifold $M$ 
has $k$ 3-punctured spheres. 
Then $k \leq 4 \vol (M)/ V_{oct}$. 
The equality holds if and only if 
$M$ is the manifold $\mathbb{M}_{4}$. 
\end{thm}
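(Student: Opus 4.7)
My plan is to combine the classification in Theorem~\ref{thm:main} with Corollary~\ref{cor:disjoint}. Let $X_{1},\dots,X_{r}$ denote the connected components of the union of totally geodesic 3-punctured spheres in $M$, and set $k_{i}=|X_{i}|$, so $k=\sum_{i}k_{i}$. I will find within each $X_{i}$ a pairwise disjoint sub-collection of at least $\lceil k_{i}/4\rceil$ 3-punctured spheres. Since distinct components are automatically disjoint, concatenating these sub-collections yields a pairwise disjoint family of at least $\sum_{i}\lceil k_{i}/4\rceil\geq k/4$ 3-punctured spheres in $M$, and Corollary~\ref{cor:disjoint} then gives $k/4\leq \vol(M)/V_{oct}$, which rearranges to the desired inequality.

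The $\lceil k_{i}/4\rceil$-disjointness claim is a direct type-by-type inspection using the descriptions of Section~\ref{section:description}. In $A_{n}$ every other 3-punctured sphere in the linear arrangement gives $\lceil n/2\rceil$ disjoint ones. In $B_{2n}$, $\Whi_{2n}$, $\Whip_{4n}$, $\Whih_{n}$, and $\Whiph_{2n}$ either the blue 3-punctured spheres or an alternating family is disjoint and has size roughly half of $k_{i}$, which already exceeds $\lceil k_{i}/4\rceil$. For $T_{3}$, $T_{4}$, $\Mag$, $\Teth$, $\Penh$, and $\Octh$ a single 3-punctured sphere suffices. In $\Bor$ two homologous 3-punctured spheres are disjoint by Lemma~\ref{lem:homologous}. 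In $\Tet$ the required pair is furnished by Lemma~\ref{lem:tet8}. In $\Oct$ two opposite 3-punctured spheres of $\mathbb{M}_{6}$ work. The only case that escapes this uniform argument is $\Pen$, since $\lceil 10/4\rceil=3$ while $\vol(\mathbb{M}_{5})<3V_{oct}$ forces $\mathbb{M}_{5}$ to admit at most two disjoint 3-punctured spheres. In this case, however, the presence of a $\Pen$-component pins down $M=\mathbb{M}_{5}$, and I verify the bound directly from the decomposition $\vol(\mathbb{M}_{5})=10V_{tet}$ and the strict inequality $4V_{tet}>V_{oct}$ between the volumes of regular ideal polyhedra, yielding $4\vol(M)/V_{oct}=40V_{tet}/V_{oct}>10=k$.

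For the equality statement, the chain of inequalities above is tight only when each $k_{i}$ is divisible by $4$, the maximal disjoint family inside $X_{i}$ has exactly $k_{i}/4$ elements, and Miyamoto's inequality underlying Corollary~\ref{cor:disjoint} is itself sharp. Going through the classification, these constraints single out a lone component of type $\Tet$, forcing $M=\mathbb{M}_{4}$. Sharpness of Miyamoto's bound is then realized because cutting $\mathbb{M}_{4}$ along the disjoint pair of 3-punctured spheres from Lemma~\ref{lem:tet8} decomposes it into two regular ideal octahedra, confirming $\vol(\mathbb{M}_{4})=2V_{oct}$ and $k=8=4\vol(M)/V_{oct}$. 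The main obstacle I anticipate is the patient type-by-type verification of the disjointness claim, together with the separate direct treatment of $\mathbb{M}_{5}$, which requires the sharp numerical comparison between the volumes of the regular ideal tetrahedron and octahedron.
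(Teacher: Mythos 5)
Your strategy---classify the components via Theorem~\ref{thm:main}, extract a disjoint subfamily of at least $\lceil k_i/4\rceil$ spheres from each component, and feed the total into Corollary~\ref{cor:disjoint}---is sound and is essentially the same engine the paper uses. The difference is bookkeeping: the paper runs the disjoint-sphere count only for the general types $A_n, B_{2n}, T_3, T_4$, and disposes of every special type by quoting the known volume of the (essentially unique) ambient manifold ($\vol(\mathbb{W}_n)=nV_{oct}$, $\vol(\mathbb{M}_3)=5.33\ldots$, $\vol(\mathbb{M}_4)=2V_{oct}$, $\vol(\mathbb{M}_5)=10V_3$, $\vol(\mathbb{M}_6)=4V_{oct}$, and $\vol\geq 2V_{oct}$ for the hatted types via the minimality of $\vol(\mathbb{M}_4)$ among manifolds with at least four cusps), which yields strict inequalities immediately. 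You instead push the special types through the same $\lceil k_i/4\rceil$ counting; this does prove the non-strict inequality uniformly, and you correctly isolate $\Pen$, where $\lceil 10/4\rceil=3$ exceeds the two disjoint spheres that $\vol(\mathbb{M}_5)<3V_{oct}$ permits, handling it numerically exactly as the paper does.

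The cost of your packaging surfaces in the equality characterization, and that is where your sketch has a genuine soft spot. For several special types your count gives $4\vol(M)/V_{oct}\geq k$ with no slack: $\Mag$ ($k_i=4$, at most one disjoint sphere since $\vol(\mathbb{M}_3)<2V_{oct}$), $\Oct$ ($k_i=8$, two disjoint spheres), and above all $\Penh$ ($k_i=4$, and the four spheres of $\Penh$ correspond to the four faces of the pentachoron avoiding the filled vertex, any two of which share an edge, so the maximal disjoint family is a single sphere). Your three tightness conditions therefore do \emph{not} by themselves ``single out a lone component of type $\Tet$'': for $\Penh$ they reduce equality to showing that a hyperbolic Dehn filling of $\mathbb{M}_5$ on one cusp cannot have volume exactly $V_{oct}$, which follows from nothing in your argument and needs an external input---either Agol's theorem that the only orientable hyperbolic 3-manifolds with at least two cusps and volume $V_{oct}$ are the Whitehead link and $(-2,3,8)$-pretzel complements (which have two cusps, not four), or the author's result, invoked in the paper, that $\mathbb{M}_4$ minimizes volume among manifolds with at least four cusps. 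Likewise $\Mag$ and $\Oct$ require the explicit values $\vol(\mathbb{M}_3)\neq V_{oct}$ and $\vol(\mathbb{M}_6)\neq 2V_{oct}$. Once these volume facts are supplied, your case analysis does terminate at $\Tet$ and the equality statement follows as you describe.
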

\begin{proof}

We first consider the special cases. 
Let $V_{oct} = 3.6638...$ be the volume of a regular ideal octahedron, 
and let $V_{3} = 1.0149...$ be the volume of a regular ideal tetrahedron. 
The assertion for the special manifolds 
is obtained from the following inequalities: 
\begin{itemize}
\item $\vol (\mathbb{W}_{n}) = \vol (\mathbb{W}^{\prime}_{n}) 
= nV_{oct} > \frac{n}{2}V_{oct}$ 
for $\Whi_{2n}$ and $\Whip_{4n}$, 
\item $\vol (\mathbb{W}^{\prime}_{2}) = 2V_{oct} > \frac{3}{2}V_{oct}$ 
for $\Bor$, 
\item $\vol (\mathbb{M}_{3}) = 5.3334... > V_{oct}$ for $\Mag$, 
\item $\vol (\mathbb{M}_{4}) = 2V_{oct}$ for $\Tet$, 
\item $\vol (\mathbb{M}_{5}) = 10V_{3} > \frac{5}{2}V_{oct}$ for $\Pen$, 
\item $\vol (\mathbb{M}_{6}) = 4V_{oct} > 2V_{oct}$ for $\Oct$. 
\end{itemize}

Since a hyperbolic 3-manifold with 3-punctured spheres 
of the type $\Whih_{n}$ 
contains $\lfloor n/2 \rfloor$ disjoint 3-punctured spheres, 
its volume is at least $\lfloor n/2 \rfloor V_{oct}$. 
The same argument holds for $\Whiph_{2n}$. 
For $\Teth$, 
the volume of a hyperbolic 3-manifold obtained by a Dehn filling 
on a cusp of $\mathbb{M}_{4}$ 
is at least $V_{oct}$, 
since the manifold contains a 3-punctured sphere. 
For $\Penh$ and $\Octh$, 
the volume of a hyperbolic 3-manifold obtained by a Dehn filling 
on a cusp of $\mathbb{M}_{5}$ or $\mathbb{M}_{6}$ 
is at least $2V_{oct}$. 
Indeed, such a manifold has at least 4 cusps,  
and $\mathbb{M}_{4}$ has the smallest volume 
of the orientable hyperbolic 3-manifolds with at least 4 cusps 
\cite{yoshida2013minimal}. 
Thus we have shown the assertion for the special cases. 

We consider the general cases. 
Suppose that the union of the 3-punctured spheres 
of an orientable hyperbolic 3-manifold $M$ consists of 
the types $A_{n}, B_{2n}, \\ T_{3}$, and $T_{4}$. 
Let $a_{n}, b_{2n}, t_{3}$, and $t_{4}$ denote 
the number of the corresponding components. 
At least one of $a_{n}, b_{2n}, t_{3}$, and $t_{4}$ is positive. 
Then $M$ contains 
$\sum_{n} (\lfloor (n+1)/2 \rfloor a_{n} + nb_{2n}) + t_{3} + 3t_{4}$ 
disjoint 3-punctured spheres. 
Hence $\vol (M) \geq 
(\sum_{n} (\lfloor (n+1)/2 \rfloor a_{n} + nb_{2n}) + t_{3} + 3t_{4})V_{oct}$. 
The assertion follows from the inequality 
\[
\sum_{n} (\lfloor \frac{n+1}{2} \rfloor a_{n} + nb_{2n}) + t_{3} + 3t_{4}
> \frac{1}{4} \Bigl( \sum_{n} (n a_{n} + 2nb_{2n}) + 3t_{3} + 4t_{4} \Bigr), 
\]
which is easily checked 
by comparing the coefficients termwise. 
\end{proof}

\section{Bound of modulus for $A_{n}$}
\label{section:parameter}

Neighborhoods of 3-punctured spheres 
of the types $B_{2n}, T_{3}$, and $T_{4}$ 
are isometrically determined as the manifolds 
$\mathbb{B}_{n+1}, \mathbb{T}_{3}$, and $\mathbb{T}_{4}$. 
The metric of a neighborhood of 3-punctured spheres of the type $A_{n}$ 
for $n \geq 2$, however,  
depends on the ambient hyperbolic 3-manifold. 
Let us consider 3-punctured spheres $\Sigma_{1}, \dots , \Sigma_{n}$ 
of the type $A_{n}$. 
There are $n$ cusps each of which intersects 
two or three of $\Sigma_{1}, \dots , \Sigma_{n}$ 
at loops of two slopes. 
We will call them 
the \textit{adjacent torus cusps} for $\Sigma_{1}, \dots , \Sigma_{n}$. 
We define the meridians and longitudes of the adjacent torus cusps 
as the intersection of the cusps and the 3-punctured spheres, 
so that each 3-punctured sphere meets exactly one longitude. 
Then the meridians and the longitudes are uniquely determined 
if $n \geq 3$. 
For $A_{2}$, however, 
there is ambiguity to permute the meridian and longitude. 
In this case we take them arbitrarily. 

The Euclidean structure of such an adjacent cusp determines 
its modulus $\tau$ with respect to the meridian and longitude. 
Then the cusp is isometric to the quotient of $\mathbb{C}$ 
under the additive action of 
$\{ m+n\tau \in \mathbb{C} | m,n \in  \mathbb{Z} \}$, 
where 1 and $\tau$ respectively correspond to the meridian and longitude. 
We may assume that $\mathrm{Im} (\tau ) >0$ 
by taking an appropriate orientation. 
We first show that the moduli of such adjacent cusps coincide. 

\begin{prop}
\label{prop:modulus}
Suppose that an orientable hyperbolic 3-manifold $M$ contains 
two 3-punctured spheres $\Sigma_{1}$ and $\Sigma_{2}$ of the type $A_{2}$. 
Let $\tau$ and $\tau^{\prime}$ denote 
the moduli of the two adjacent cusps $C_{1}$ and $C_{2}$. 
Then $\tau = \tau^{\prime}$. 
\end{prop}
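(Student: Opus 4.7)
The plan is to work in the upper half-space model of $\mathbb{H}^3$ and pin down the peripheral parabolics of $\pi_1(\Sigma_1)$ and $\pi_1(\Sigma_2)$ explicitly, using the rigidity of the 3-punctured sphere structure. The geodesic $g = \Sigma_1\cap\Sigma_2$ connects the two adjacent cusps $C_1,C_2$. I would first lift and normalize coordinates so that a component $\tilde g$ of the preimage of $g$ is the vertical geodesic from $0$ to $\infty$, with $\infty\leftrightarrow C_1$ and $0\leftrightarrow C_2$. The lifts $\tilde\Sigma_1,\tilde\Sigma_2$ through $\tilde g$ are then vertical half-planes; I place them so that $\partial\tilde\Sigma_1 = \mathbb{R}$ and $\partial\tilde\Sigma_2$ is the line through $0$ at the dihedral angle $\theta$.

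Since each $\Sigma_i$ carries the unique complete hyperbolic structure of a 3-punctured sphere, its three cusps lie at three determined points on $\partial\tilde\Sigma_i$. Two cusps of $\Sigma_1$ are already at $0$ and $\infty$; I would spend the remaining positive real-dilation freedom of $\mathbb{H}^3$ to place the third cusp of $\Sigma_1$ at $1\in\mathbb{R}$, identifying $\pi_1(\Sigma_1)$ with the standard $\Gamma(2)$ and pinning down the peripheral parabolics of $\Sigma_1$ at $C_1,C_2$ as $z\mapsto z+2$ and $z\mapsto z/(1-2z)$ respectively. The scaling freedom is now exhausted, so the third cusp of $\Sigma_2$ is some uniquely determined point $re^{i\theta}$ on $\partial\tilde\Sigma_2$; the peripheral parabolics of $\Sigma_2$ at $C_1,C_2$ become $z\mapsto z+2re^{i\theta}$ and $z\mapsto z/(1-(2e^{-i\theta}/r)z)$.

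With these four parabolic generators explicit, I would conjugate by $z\mapsto 1/z$ to send $C_2$ to $\infty$, turning both cusp holonomies into translations in $\mathbb{C}$ from which the moduli can be read off. The convention that each 3-punctured sphere meets exactly one longitude forces the $\Sigma_1$-loop to play the meridian role at one cusp and the longitude role at the other, and similarly for $\Sigma_2$. With this matched labelling, the modulus at $C_1$ computes to $(2re^{i\theta})/2 = re^{i\theta}$, and the modulus at $C_2$ computes to $(-2)/(-2e^{-i\theta}/r) = re^{i\theta}$; hence $\tau = \tau'$. The sign of $r$ is then pinned down by the requirement $\operatorname{Im}\tau > 0$.

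The main subtlety is the meridian/longitude convention. The two naive modular ratios at $C_1$ and $C_2$ (using, say, the $\Sigma_1$-loop as meridian at both) are reciprocal rather than equal, reflecting the reciprocal effect of the $z\mapsto 1/z$ conjugation used to bring $C_2$ to $\infty$. The swap of roles dictated by the ``exactly one longitude'' convention is precisely what inverts one of the naive ratios, producing the equality $\tau=\tau'$. Keeping careful track of this swap, together with the orientation choice ensuring $\operatorname{Im}\tau > 0$, is the point that requires the most care.
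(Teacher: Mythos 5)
Your argument is correct and is essentially the paper's own: both proofs normalize the two peripheral parabolic pairs at the shared cusps explicitly in $\mathrm{PSL}(2,\mathbb{C})$ and exploit the rigidity of the totally geodesic 3-punctured sphere (its holonomy being a conjugate of $\Gamma(2)$) to compare the two cusp lattices. The paper encodes that rigidity by imposing that $xy^{-1}$ and $zw^{-1}$ are parabolic (trace $\pm 2$), with $\tau,\tau'$ as unknowns in the matrices, while you encode it by pinning the third ideal point of each surface; the resulting computation, including the meridian/longitude swap you flag, is the same.
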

\begin{proof}
Let $x,y,z,w \in \pi_{1}(M)$ be represented by the loops 
shown in Figure~\ref{fig:u3ps-modulus}. 
The base point is taken in $\Sigma_{1} \cap \Sigma_{2}$. 
The meridians correspond to $y$ and $z$, 
and the longitudes correspond to $x$ and $w$. 
Regard $\pi_{1}(M)$ as a subgroup of 
$\mathrm{PSL}(2,\mathbb{C}) \cong \mathrm{Isom}^{+}(\mathbb{H}^{3})$. 
Since $x$ and $y$ are parabolic elements with distinct fixed points 
in $\partial \mathbb{H}^{3}$, 
we may assume that 
\[
x = \begin{pmatrix}
1 & 2 \\ 
0 & 1
\end{pmatrix}
\quad \text{and} \quad
y = \begin{pmatrix}
1 & 0 \\ 
c & 1
\end{pmatrix} 
\]
by taking conjugates. 
Then 
$xy^{-1} = \begin{pmatrix}
1-2c & 2 \\ 
-c & 1
\end{pmatrix}$ 
is also parabolic. 
Hence $|\mathrm{tr}(xy^{-1})| = |2-2c| =2$. 
Since $y$ is not the identity, we have $c=2$. 

\fig[width=8cm]{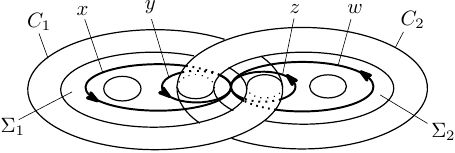}{Generators of $\pi_{1} (\Sigma_{1})$ and $\pi_{1} (\Sigma_{2})$}

The moduli $\tau$ and $\tau^{\prime}$ give the representations 
\[
z = \begin{pmatrix}
1 & 2/\tau \\ 
0 & 1
\end{pmatrix} 
\quad \text{and} \quad 
w = \begin{pmatrix}
1 & 0 \\ 
2\tau^{\prime} & 1
\end{pmatrix}. 
\]
Then 
$zw^{-1} = \begin{pmatrix}
1-4(\tau^{\prime}/\tau) & 2/\tau \\ 
-2\tau^{\prime} & 1 
\end{pmatrix}$ 
is also parabolic. 
Hence \\ $|\mathrm{tr}(zw^{-1})| = |2-4(\tau^{\prime}/\tau)| =2$. 
Since $\tau^{\prime} \neq 0$, we have $\tau = \tau^{\prime}$. 
\end{proof}

Therefore 
the metric of a neighbourhood of 3-puncture spheres 
of the type $A_{n} (n \geq 2)$ 
is determined by the single modulus $\tau$. 
In particular, the angle at the intersection is equal to $\arg \tau$. 

Let $\mathcal{C}_{n}$ denote the set 
of the moduli for 3-punctured spheres of the type $A_{n}$ 
contained in (possibly infinite volume) orientable hyperbolic 3-manifolds. 
We first give a bound for $\mathcal{C}_{n}$ 
by using the Shimizu-Leutbecher lemma. 

\begin{lem}[the Shimizu-Leutbecher lemma~\cite{shimizu1963discontinuous}]
\label{lem:shimizu}
Suppose that a group generated by two elements 
\[
\begin{pmatrix}
1 & 1 \\ 
0 & 1
\end{pmatrix}, 
\begin{pmatrix}
a & b \\ 
c & d
\end{pmatrix} 
\in \mathrm{PSL}(2,\mathbb{C})
\]
is discrete. 
Then $c=0$ or $|c| \geq 1$. 
\end{lem}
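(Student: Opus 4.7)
The plan is to prove the contrapositive: assuming $0 < |c| < 1$, I will construct a sequence of distinct elements in the group converging to the identity, violating discreteness. Write $T = \begin{pmatrix} 1 & 1 \\ 0 & 1 \end{pmatrix}$ and $A_0 = \begin{pmatrix} a & b \\ c & d \end{pmatrix}$, and define inductively $A_{n+1} = A_n T A_n^{-1}$, which lies in the group generated by $T$ and $A_0$.

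First I would carry out the direct matrix computation. Using $A_n^{-1} = \begin{pmatrix} d_n & -b_n \\ -c_n & a_n \end{pmatrix}$ (since $\det A_n = 1$), a short calculation gives
\[
A_{n+1} = \begin{pmatrix} 1 - a_n c_n & a_n^2 \\ -c_n^2 & 1 + a_n c_n \end{pmatrix}.
\]
The key recursion is $c_{n+1} = -c_n^2$, so $|c_n| = |c|^{2^n}$, which tends to $0$ superexponentially when $|c| < 1$.

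Second, I would show that the other entries stay bounded and in fact $A_n \to I$. From $a_{n+1} = 1 - a_n c_n$, an induction using the geometric-type bound $\sum_n |c_n| < \infty$ shows that the sequence $\{a_n\}$ is bounded, hence $a_n c_n \to 0$, hence $a_n \to 1$. The same argument with $d_{n+1} = 1 + a_n c_n$ and $b_{n+1} = a_n^2$ gives $d_n \to 1$ and $b_n \to 1$. Therefore $A_n \to I$ in $\mathrm{PSL}(2,\mathbb{C})$. On the other hand $A_n \neq I$ for every $n$, since $c_n \neq 0$. This produces an accumulation point of the group at the identity, contradicting discreteness.

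The main obstacle is the bookkeeping for the second step: one must show all four entries of $A_n$ converge (not merely $c_n \to 0$), which requires an a priori bound on $|a_n|$. Once this is done, the argument reduces to the simple observation that a discrete subgroup of $\mathrm{PSL}(2,\mathbb{C})$ cannot contain a sequence of distinct elements tending to the identity.
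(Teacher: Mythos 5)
Your proof is correct. The paper does not prove this lemma at all --- it simply cites Shimizu's original article --- so there is no in-paper argument to compare against; what you have written is the classical proof (the one in Shimizu's paper and in standard references such as Maskit), based on iterating the conjugation $A_{n+1}=A_nTA_n^{-1}$, observing $c_{n+1}=-c_n^2$ so that $|c_n|=|c|^{2^n}\to 0$, and bounding $|a_n|$ via $\max(1,|a_{n+1}|)\leq(1+|c_n|)\max(1,|a_n|)$ together with $\sum_n|c_n|<\infty$ to conclude $A_n\to I$ with $A_n\neq I$, contradicting discreteness. All the matrix computations and the bookkeeping you outline check out, including the point that each $A_n$ has nonzero lower-left entry and hence is nontrivial in $\mathrm{PSL}(2,\mathbb{C})$.
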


\begin{prop}
\label{prop:modulusbound}
Let $\tau \in \mathcal{C}_{2}$. 
Then 
\[
|m \tau +n| \geq \frac{1}{4}
\quad \text{and} \quad 
\left|\frac{m}{\tau} +n\right| \geq \frac{1}{4}
\]
for any $(m,n) \in \mathbb{Z} \times \mathbb{Z} \setminus \{(0,0)\}$.
In particular, 
\[
\frac{1}{4} \leq |\tau| \leq 4
\quad \text{and} \quad 
0.079 < \arg \tau < \pi -0.079. 
\]
\end{prop}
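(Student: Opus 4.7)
The strategy is to combine the explicit parabolic generators from the proof of Proposition~\ref{prop:modulus} with the Shimizu-Leutbecher lemma (Lemma~\ref{lem:shimizu}). After the normalizations made there, one has, inside the discrete subgroup $\pi_{1}(M) \subset \mathrm{PSL}(2,\mathbb{C})$,
\[
x = \begin{pmatrix} 1 & 2 \\ 0 & 1 \end{pmatrix}, \quad
y = \begin{pmatrix} 1 & 0 \\ 2 & 1 \end{pmatrix}, \quad
z = \begin{pmatrix} 1 & 2/\tau \\ 0 & 1 \end{pmatrix}, \quad
w = \begin{pmatrix} 1 & 0 \\ 2\tau & 1 \end{pmatrix}.
\]
Since $x$ and $z$ share the fixed point $\infty$ they commute, and for every $(m,n) \in \mathbb{Z}^{2}$ the product $z^{m} x^{n} = \begin{pmatrix} 1 & 2(m/\tau + n) \\ 0 & 1 \end{pmatrix}$ lies in $\pi_{1}(M)$; analogously $y^{m} w^{n} = \begin{pmatrix} 1 & 0 \\ 2(m + n\tau) & 1 \end{pmatrix} \in \pi_{1}(M)$.

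For the first inequality, fix $(m,n) \neq (0,0)$; since $\mathrm{Im}(\tau) > 0$ the complex number $m/\tau + n$ is nonzero. Conjugating $\pi_{1}(M)$ by the diagonal matrix $D = \mathrm{diag}(a, a^{-1})$ with $a^{2} = 1/(2(m/\tau + n))$ preserves discreteness and transforms $z^{m} x^{n}$ into $\begin{pmatrix} 1 & 1 \\ 0 & 1 \end{pmatrix}$ and $y$ into $\begin{pmatrix} 1 & 0 \\ 4(m/\tau + n) & 1 \end{pmatrix}$. Lemma~\ref{lem:shimizu} then forces $|4(m/\tau + n)| \geq 1$, yielding $|m/\tau + n| \geq 1/4$. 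The companion inequality $|m\tau + n| \geq 1/4$ follows by the symmetric argument using $y^{m} w^{n}$ in place of $z^{m} x^{n}$, after first conjugating by $\begin{pmatrix} 0 & -1 \\ 1 & 0 \end{pmatrix}$ to swap the fixed points $0$ and $\infty$ and employing $x$ in place of $y$.

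The bounds $1/4 \leq |\tau| \leq 4$ follow instantly by substituting $(m,n) = (1,0)$ into the two inequalities. For the angle bound, $\tau$ is confined to the region of the upper half-plane with $1/4 \leq |\tau| \leq 4$ that avoids every closed disk of radius $1/4$ centered at an integer in $[-4,4]$ as well as the M\"obius images of these disks under $\tau \mapsto 1/\tau$. A direct elementary optimization on this domain---in which the infimum of $\arg \tau$ is attained near $|\tau| = 4$, on the tangent line from the origin to the excluded disk around the extreme integer $k = 4$ and adjusted for the bound $|\tau| \leq 4$---produces the explicit constant $0.079$. The upper bound $\arg \tau < \pi - 0.079$ is immediate from the reflection symmetry $\tau \mapsto -\bar{\tau}$, which preserves both inequality families.

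The only subtle point is the shape of the Shimizu-Leutbecher lemma available: Lemma~\ref{lem:shimizu} is stated for parabolics of translation length $1$, whereas $z^{m} x^{n}$ has arbitrary translation length $2(m/\tau+n)$. This is handled by the standard rescaling-by-diagonal-conjugation trick above, after which the inequalities fall out with no further work. The numerical constant $0.079$ in the angle bound is then a finite planar geometric calculation rather than a conceptual obstruction.
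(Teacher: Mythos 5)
Your derivation of the two families of inequalities is correct and is essentially the paper's argument: you rescale the Shimizu--Leutbecher lemma by a diagonal conjugation so that the parabolic $z^{m}x^{n}$ (resp.\ $y^{m}w^{n}$, after swapping $0$ and $\infty$) has translation length $1$, and read off $|4(m/\tau+n)|\geq 1$ and $|4(m\tau+n)|\geq 1$. The paper does the same thing in the equivalent form ``translation length $2$ forces $|c|\geq 1/2$'' and applies it to $y^{n}w^{m}$ and $x^{n}z^{m}$. The bound $1/4\leq|\tau|\leq 4$ from $(m,n)=(1,0)$ is also fine.

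The gap is in the angle bound. The inequality $|m\tau+n|\geq 1/4$ excludes, for \emph{every} nonzero $(m,n)$, the disk of radius $1/(4|m|)$ centered at the rational $-n/m$ — not only the disks of radius $1/4$ around integers. Your described region (integer-centered disks plus their images under $\tau\mapsto 1/\tau$) leaves uncovered open intervals of the real axis, e.g.\ $(4/7,3/4)$: the nearest integer disks end at $3/4$, and the inversion images of the integer disks exclude only $[4/9,4/7]$ and $[4/5,4/3]$ nearby. Points just above such an interval satisfy all of your stated constraints and have arbitrarily small argument, so your domain yields no positive lower bound on $\arg\tau$ at all, and certainly not $0.079$. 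Your proposed extremal configuration (a tangent line from the origin to the disk around $k=4$, near $|\tau|=4$) is also not where the minimum occurs: the paper shows the constraints reduce to the finite list $(m,n)=(1,0),(4,\pm1),(3,\pm1),(2,\pm1),(3,\pm2),(4,\pm3),(1,\pm1)$, and the minimal-argument point in the first quadrant is $(93+\sqrt{55}\,i)/128$ (with $|\tau|\approx 0.73$), lying on the intersection of the circles $|3\tau-2|=1/4$ and $|4\tau-3|=1/4$, i.e.\ the boundaries of the excluded disks around $2/3$ and $3/4$. This gives $\arg\tau\geq\arctan(\sqrt{55}/93)>0.079$. To repair your argument you must include the disks around all rationals $-n/m$ with $|m|\leq 4$ (together with the inversion constraints) before optimizing.
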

\begin{proof}
Let $x,y,z,w \in \pi_{1}(M)$ be as in the proof 
of Proposition~\ref{prop:modulus}. 
By taking conjugates for the Shimizu-Leutbecher lemma, 
we have 
if $x = \begin{pmatrix}
1 & 2 \\ 
0 & 1
\end{pmatrix}$ and 
$\begin{pmatrix}
a & b \\ 
c & d
\end{pmatrix}$ 
generate a discrete subgroup of $\mathrm{PSL}(2,\mathbb{C})$, 
then $c=0$ or $|c| \geq 1/2$. 
Now considering 
$y^{n}w^{m}= \begin{pmatrix}
1 & 0 \\ 
2m\tau +2n & 1
\end{pmatrix}$, 
we obtain 
$|m\tau +n| \geq 1/4$. 
Similarly, 
since 
$y = \begin{pmatrix}
1 & 0 \\ 
2 & 1
\end{pmatrix}$ and
$x^{n}z^{m}= \begin{pmatrix}
1 & 2(m/\tau) +2n \\ 
0 & 1
\end{pmatrix}$
generate a discrete subgroup, 
we have 
$|(m/\tau) +n| \geq 1/4$. 

\fig[width=12cm]{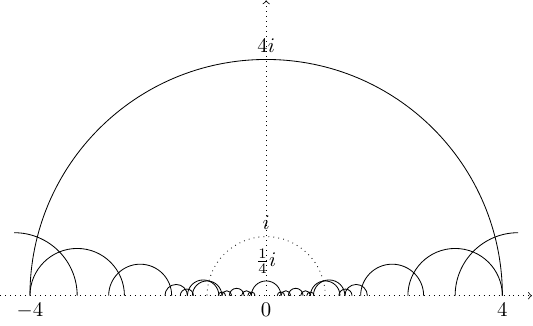}{A bound for the modulus $\tau$}

In fact, these conditions are equivalent to 
the inequalities for 
\[
(m,n)=(1,0), (4,\pm 1), (3,\pm 1), (2,\pm 1), (3,\pm 2), (4,\pm 3), (1, \pm 1). 
\]
Indeed, for large $(m,n)$, 
the equalities $|m \tau +n| = 1/4$ 
and $|(m/\tau) +n| = 1/4$ 
give small circles in $\mathbb{C}$. 
The above inequalities define 
the region bounded by 26 arcs as shown in Figure~\ref{fig:u3ps-region}. 
This region is 
symmetric about the imaginary axis 
and invariant under the inversion 
with respect to the unit circle. 
We recall that $\mathrm{Im} (\tau) >0$. 
The inequalities for $(m,n)=(1,0)$ imply that 
$\frac{1}{4} \leq |\tau| \leq 4$. 
A point of minimal slope of $\tau$ in the first quadrant 
satisfying these inequalities is $(93+\sqrt{55}i)/128$, 
which is contained in the intersection of two circles given by 
$|3\tau -2| = 1/4$ and $|4\tau -3| =1/4$. 
Therefore $\arg \tau \geq \arctan (\sqrt{55}/93) > 0.079$. 
\end{proof}

For the 3-punctured spheres $\Sigma_{1}, \dots , \Sigma_{n}$ 
of the type $A_{n}$, 
let $M_{n}$ be a regular neighborhood of the union 
of $\Sigma_{1} \cup \dots \cup \Sigma_{n}$ 
with the $n$ adjacent torus cusps. 
The frontier $\partial_{0} M_{n}$ is a 4-punctured sphere. 
For $\tau \in \mathbb{C} \setminus \{ 0 \}$, 
we define a representation 
$\rho_{\tau} \colon \pi_{1}(M_{n}) \to \mathrm{PSL}(2, \mathbb{C})$ 
whose restriction to $\pi_{1}(\Sigma_{i} \cup \Sigma_{i+1})$ 
for each $1 \leq i \leq n-1$ 
is conjugate to the one 
in the proof of Proposition~\ref{prop:modulus}. 
The parameter space $\mathbb{C} \setminus \{ 0 \}$ 
can be regarded as the character variety for $M_{n}$. 
We regard $\mathcal{C}_{n}$ 
as a subspace of $\mathbb{C} \setminus \{ 0 \}$. 
If $\tau \in \mathcal{C}_{n}$, 
then $\rho_{\tau}$ is the holonomy representation for $M_{n}$ 
with the cusp modulus $\tau$. 
The image of $\rho_{\tau}$ is discrete 
if and only if $\tau$ or $\bar{\tau} \in \mathcal{C}_{n}$. 

J\o rgensen's inequality~\cite{jorgensen1976discrete} implies that 
if representations of a group to $\mathrm{PSL}(2, \mathbb{C})$ 
have discrete images, 
their algebraic limits is elementary or has a discrete image. 
Hence $\mathcal{C}_{n}$ is closed in $\mathbb{C} \setminus \{ 0 \}$. 
We divide $\mathcal{C}_{n}$ into two subsets 
$\mathcal{C}_{n}^{\mathrm{incomp}} = \{ \tau \in \mathcal{C}_{n} \mid \rho_{\tau}$ 
is injective$\}$
and 
$\mathcal{C}_{n}^{\mathrm{comp}} = \{ \tau \in \mathcal{C}_{n} \mid \rho_{\tau}$ 
is not injective$\}$.

\begin{thm}
\label{thm:incomp}
The set $\mathcal{C}_{n}^{\mathrm{incomp}}$ 
is homeomorphic to a closed disk. 
\end{thm}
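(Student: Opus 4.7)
The plan is to identify $\mathcal{C}_n^{\mathrm{incomp}}$ with the closure of the quasiconformal deformation space of $M_n$ inside the $\mathrm{PSL}(2,\mathbb{C})$-character variety of $\pi_1(M_n)$, and show that closure is topologically a closed $2$-disk. The first step is to see that the modulus $\tau$ is a global holomorphic coordinate on this deformation space. Using the normalization of Proposition~\ref{prop:modulus}, once the parabolic pair $(x,y)$ is put in the canonical form appearing in its proof, the generators $x,y,z,w$ associated to $\Sigma_1$ and $\Sigma_2$ are determined by $\tau$. Propagating this along the chain of 3-punctured spheres (with the moduli of successive adjacent cusps all equal to $\tau$ by Proposition~\ref{prop:modulus}), every generator of $\pi_1(M_n)$ is rigidified by $\tau$; hence $\tau \mapsto [\rho_\tau]$ is a continuous injection $\mathcal{C}_n^{\mathrm{incomp}} \hookrightarrow X(\pi_1 M_n)$, which composed with the modulus coordinate realizes $\mathcal{C}_n^{\mathrm{incomp}}$ as a subset of $\mathbb{C}^{*}$ homeomorphic to the abstract deformation space.

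Next I would analyze the interior by the Ahlfors-Bers-Marden theory. The manifold $M_n$ is pared, with $n$ rank-two parabolic cusps and one geometrically finite end modeled on the $4$-punctured sphere $\partial_0 M_n$. The space of (equivalence classes of) geometrically finite, minimally parabolic hyperbolic structures on $M_n$ is naturally parameterized by the Teichm\"uller space $T(\partial_0 M_n)$. Since $\partial_0 M_n$ is a $4$-punctured sphere, this Teichm\"uller space has complex dimension one and is biholomorphic to the open unit disk. Under the modulus map, this gives a holomorphic embedding of $\mathbb{D}$ into $\mathbb{C}^{*}$ as an open subset $\Omega$ of $\mathcal{C}_n^{\mathrm{incomp}}$ comprising the geometrically finite locus.

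To complete the argument, I would show $\mathcal{C}_n^{\mathrm{incomp}}=\overline{\Omega}$ is a closed disk. Chuckrow's theorem together with J\o rgensen's inequality implies that $\mathcal{C}_n^{\mathrm{incomp}}$ is closed in $\mathbb{C}^{*}$; Proposition~\ref{prop:modulusbound} gives that it is bounded; hence it is compact. The remaining points $\mathcal{C}_n^{\mathrm{incomp}}\setminus \Omega$ correspond to algebraic limits of geometrically finite structures that degenerate to geometrically infinite ones. By the density theorem (Brock--Canary--Minsky, Namazi--Souto) and the Ending Lamination Theorem, these limits are in bijection with ending laminations on $\partial_0 M_n$, and the projective measured lamination space of a $4$-punctured sphere is a topological circle. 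Combined with the disk interior $\Omega$, this gives $\mathcal{C}_n^{\mathrm{incomp}}$ the structure of a disk with a circle attached as frontier, hence homeomorphic to $\overline{\mathbb{D}}$.

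The hardest step is the last one: ensuring that the boundary of $\Omega$ in $\mathbb{C}$ is a Jordan curve rather than a more pathological limit set requires invoking the density and ending lamination results in the pared setting. A more elementary alternative, likely closer to what the paper actually pursues, would be to track the pinching slopes on $\partial_0 M_n$ directly through the matrix form of $\rho_\tau$, realize the boundary of $\Omega$ as the image of a continuous injection $S^{1}\to\mathbb{C}$ parameterized by rational boundary slopes and their irrational limits, and then conclude by compactness together with the Jordan curve theorem that $\overline{\Omega}$ is homeomorphic to a closed disk.
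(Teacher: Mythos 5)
Your overall skeleton matches the paper's: the paper does not reprove this statement but cites Minsky's classification of punctured-torus groups, noting that his argument applies verbatim to hyperbolic 3-manifolds whose boundary consists of 4-punctured spheres, and then describes $\mathcal{C}_{n}^{\mathrm{incomp}}$ as the image of a continuous map $\iota \colon \overline{\mathcal{T}_{0,4}} \to \mathbb{C} \setminus \{0\}$ from the Thurston compactification (a closed disk). Your first two paragraphs and your compactness argument (Chuckrow--J\o rgensen for closedness, Proposition~\ref{prop:modulusbound} together with $\mathcal{C}_{n} \subset \mathcal{C}_{2}$ for boundedness, and $|\tau| \geq 1/4$ to stay away from $0$) are consistent with this picture.

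The genuine gap is in your final step. From ``the interior $\Omega$ is an open disk'' and ``the frontier points are in bijection with $\mathcal{PML}(\partial_{0}M_{n}) \cong S^{1}$'' you conclude that $\overline{\Omega}$ is a closed disk, but this inference is false in general: a compact planar set whose interior is an open disk and whose frontier admits a set-theoretic bijection with a circle need not have a Jordan curve as frontier. The Density Theorem and the Ending Lamination Theorem give surjectivity and injectivity of the ending-invariants correspondence on the boundary, but not continuity of its inverse, i.e.\ not continuity of the parameterization $\overline{\mathcal{T}_{0,4}} \to \mathbb{C}$ at boundary points. Indeed, continuity of ending invariants fails for general Kleinian groups (this is the self-bumping phenomenon), and the fact that it holds for punctured-torus-type ends is precisely the extra content of Minsky's theorem (his a priori bounds / pivot theorem), which is what the paper invokes. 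Your proposed ``more elementary alternative'' of tracking rational pinching slopes and their irrational limits runs into the same issue: one must prove that the resulting map $S^{1} \to \mathbb{C}$ is continuous, and that is again the hard analytic step rather than something the Jordan curve theorem can supply. To close the gap you should cite Minsky's continuity statement directly (as the paper does), rather than reassemble it from the Density Theorem and the ELT.
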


Theorem~\ref{thm:incomp} is due to Minsky~\cite{minsky1999classification}. 
Although in that paper he mainly considered once-punctured torus groups, 
he noted that the same argument holds for hyperbolic 3-manifolds with 
boundary consisting of 4-punctured spheres. 
This homeomorphism is described as follows. 
Let $\mathcal{T}_{0,4}$ denote 
the Teichm\"{u}ller space of a 4-punctured sphere. 
Its Thurston compactification 
$\overline{\mathcal{T}_{0,4}}$ is homeomorphic to a closed disk. 
The set $\mathcal{C}_{n}^{\mathrm{incomp}}$ is 
the image of a continuous map 
$\iota \colon \overline{\mathcal{T}_{0,4}} 
\to \mathbb{C} \setminus \{ 0 \}$. 
The Thurston boundary 
$\partial \mathcal{T}_{0,4} 
= \overline{\mathcal{T}_{0,4}} - \mathcal{T}_{0,4}$ 
can be identified with $\mathbb{R} \cup \{ \infty \}$. 
A rational points of $\partial \mathcal{T}_{0,4}$ 
corresponds to the homotopy class of an essential simple closed curve 
on the 4-punctured sphere. 
Here we conventionally regard $\infty$ as a rational point. 
A parameter $\tau \in \mathcal{C}_{n}^{\mathrm{incomp}}$ 
determines a hyperbolic structure of $M_{n}$ 
as $\mathbb{H}^{3} / \rho_{\tau}(\pi_{1}(M_{n}))$. 
If $\tau \in \mathrm{int} (\mathcal{C}_{n}^{\mathrm{incomp}})$, 
then the hyperbolic structure is geometrically finite, 
and the conformal structure on the infinite end $\partial_{0} M_{n}$ 
is given by $\iota^{-1} (\tau) \in \mathcal{T}_{0,4}$. 
If $\iota^{-1} (\tau)$ is 
a rational point of $\partial \mathcal{T}_{0,4}$, 
there is an annular cusp of the corresponding slope 
on $\partial_{0} M_{n}$. 
On the other hand, 
if $\iota^{-1} (\tau)$ is 
an irrational point of $\partial \mathcal{T}_{0,4}$, 
then $\mathbb{H}^{3} / \rho_{\tau}(\pi_{1}(M_{n}))$ is 
geometrically infinite. 

\begin{thm}
\label{thm:comp}
The set $\mathcal{C}_{n}^{\mathrm{comp}}$ 
is a countably infinite set. 
\end{thm}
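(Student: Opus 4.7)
The plan is to identify each point of $\mathcal{C}_n^{\mathrm{comp}}$ with a complete finite-volume hyperbolic 3-manifold obtained from $M_n$ by a Dehn-filling-type operation along a simple closed curve on the 4-punctured sphere frontier $\partial_0 M_n$, and then to combine Mostow--Prasad rigidity with a version of Thurston's hyperbolic Dehn surgery theorem.

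First I would analyse a point $\tau \in \mathcal{C}_n^{\mathrm{comp}}$. The image $\rho_\tau(\pi_1(M_n))$ is discrete with nontrivial kernel, and the quotient $\mathbb{H}^3/\rho_\tau(\pi_1(M_n))$ is a complete hyperbolic 3-manifold $M$ of finite volume containing the configuration $\Sigma_1 \cup \cdots \cup \Sigma_n$ with adjacent cusp modulus $\tau$. A nontrivial element of $\ker \rho_\tau$ cannot be represented by a loop in the interior of a (totally geodesic) $\Sigma_i$ nor by a loop in one of the $n$ adjacent torus cusps, so it must be represented by a curve on $\partial_0 M_n$; the kernel is generated as a normal subgroup by an essential simple closed curve $\gamma$ on $\partial_0 M_n$, and $M$ is the corresponding filling of $M_n$. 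Since essential simple closed curves on a 4-punctured sphere are classified up to isotopy by rational slopes $\mathbb{Q} \cup \{\infty\}$, and since Mostow--Prasad rigidity forces each such filled $M$ to determine $\rho_\tau$, hence $\tau$, uniquely, this already shows that $\mathcal{C}_n^{\mathrm{comp}}$ is at most countable.

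For infinitude I would invoke Thurston's hyperbolic Dehn surgery theorem. Because $\partial_0 M_n$ is a 4-punctured sphere rather than a torus, the classical statement does not apply verbatim; I would pass to the $2$-fold branched cover of $M_n$ along appropriate arcs in $\partial_0 M_n$, whose frontier is a torus, so that the filling operation on $\partial_0 M_n$ corresponds to standard hyperbolic Dehn surgery on a torus cusp (equivalently, one can invoke the orbifold version directly). The theorem then guarantees that all but finitely many of the countably infinitely many slopes produce complete hyperbolic structures, yielding infinitely many distinct points in $\mathcal{C}_n^{\mathrm{comp}}$.

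The main obstacle is verifying the precise correspondence between non-injective discrete $\rho_\tau$ and simple-closed-curve fillings on $\partial_0 M_n$: one must rule out that $\ker \rho_\tau$ comes from a more complicated lamination-type degeneration and confirm that it is normally generated by a single simple closed curve. For this I would rely on the Minsky picture invoked in Theorem~\ref{thm:incomp}: non-injective discrete limits in $\mathbb{C} \setminus \{0\}$ arise by pinching past rational points of $\partial \mathcal{T}_{0,4}$, and continuing past an annular cusp precisely fills in the pinched curve, producing the Dehn-filling structure described above.
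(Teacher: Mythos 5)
Your first half --- identifying a non-injective discrete $\rho_\tau$ with a compression of the $4$-punctured sphere $\partial_0 M_n$ along an essential simple closed curve of rational slope, so that $\tau$ is determined by that slope and $\mathcal{C}_n^{\mathrm{comp}}$ is at most countable --- is essentially the paper's argument: there one equips $M_n$ with the metric of modulus $\tau$, observes $\partial_0 M_n$ is compressible, and concludes that $\mathbb{H}^3/\rho_\tau(\pi_1(M_n))$ is $M_n$ glued to a trivial (rational) tangle complement, i.e.\ the complement of the Montesinos link $L(1/2,\dots,1/2,r)$ with $n+1$ tangles of slope $1/2$.

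The infinitude step is where your proposal breaks down. First, the branched cover you describe does not exist: an unbranched double cover of $\partial_0 M_n$ has Euler characteristic $-4$, never $0$, and the four punctures are ideal points on the torus cusps of $M_n$, so there is no locus inside $M_n$ over which to branch in order to turn the frontier into a torus. Second, the natural double branched cover in this situation lives over the complementary tangle ball, and the resulting double branched cover of $S^3$ over $L(1/2,\dots,1/2,r)$ is a Seifert fibered space over $S^2$ with at least four exceptional fibers --- this is the defining property of Montesinos links --- so it is never hyperbolic; consequently the associated $\pi$-orbifolds are never hyperbolic either, and no orbifold Dehn surgery theorem can produce the hyperbolic structures you need. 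Third, the full family $\{S^3\setminus L(1/2,\dots,1/2,r)\}_{r\in\mathbb{Q}\cup\{\infty\}}$ is not a family of Dehn fillings of a single finite-volume manifold (their volumes are unbounded, as the corresponding $\tau$ accumulate on geometrically infinite structures on $\partial\mathcal{C}_n^{\mathrm{incomp}}$), so the classical surgery theorem cannot be applied to it wholesale. The paper avoids all of this by quoting the Bonahon--Siebenmann classification of Montesinos links, which lists exactly the finitely many slopes $r$ for which $L(1/2,\dots,1/2,r)$ fails to be hyperbolic. If you want a surgery-theoretic proof of mere infinitude, the workable version is to drill an unknotted circle encircling the two strands of the last tangle (an honest torus cusp), check that this augmented link complement is hyperbolic, and fill it along infinitely many slopes --- but that is a different construction from the one you propose and still requires a separate hyperbolicity input.
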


If $\tau \in \mathcal{C}_{n}^{\mathrm{comp}}$, 
the manifold $\mathbb{H}^{3} / \rho_{\tau}(\pi_{1}(M_{n}))$ 
is the complement of a Montesinos link. 
We prepare the notion of Montesinos links. 

Let 
$B^{3} = \{ (x,y,z) \in \mathbb{R}^{3} | x^{2} + y^{2} + z^{2} \leq 1\}$. 
Consider the natural projection of $B^{3}$ into the $xy$-plane. 
A 2-tangle is two arcs properly embedded in $B^{3}$ 
whose endpoints are $\{ (\pm 1/\sqrt{2}, \pm 1/\sqrt{2}, 0) \}$. 
The equivalence of 2-tangles is given by 
isotopy fixing the endpoints of the arcs. 
A trivial tangle is a 2-tangle 
that is injected by the projection. 
The homotopy class of a (non-oriented) essential closed curve 
in the 4-punctured sphere 
$\partial_{0} B^{3} = 
\partial B^{3} - \{ (\pm 1/\sqrt{2}, \pm 1/\sqrt{2}, 0) \}$ 
is determined by a slope $r \in \mathbb{Q} \cup \{ \infty \}$. 
Here the slope $r=p/q$ for coprime integers $p$ and $q$ is 
defined so that 
the homology class of the loop is $\pm (pm+ql)$, 
where $m, l \in H_{1}(\partial_{0} B^{3}, \mathbb{Z})$ 
are respectively represented 
by the loops $\theta \in [0,2\pi] \mapsto 
(\sin \theta,0,\cos \theta), (0,\sin \theta,\cos \theta)$. 
A 2-tangle homeomorphic to a trivial tangle 
is determined up to isotopy by $r \in \mathbb{Q} \cup \{ \infty \}$, 
where the compressing disk for $\partial_{0} B^{3}$ 
in the complement has the boundary of slope $r$. 
This tangle is called a rational tangle of slope $r$. 

For $r_{1}, \dots , r_{n} \in \mathbb{Q} \cup \{ \infty \}$, 
the Montesinos link $L(r_{1}, \dots , r_{n})$ 
is defined by composing 
rational tangles of slopes $r_{1}, \dots , r_{n}$ 
as shown in Figure~\ref{fig:u3ps-montesinos}. 
We consider that the diagram is drawn in the $xy$-plane, 
which determines the slopes of tangles. 

\fig[width=10cm]{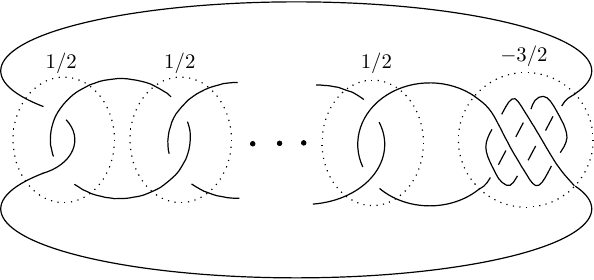}{The Montesinos link $L(1/2, 1/2, \dots , 1/2, -3/2)$}

\begin{proof}[Proof of Theorem~\ref{thm:comp}]
Let $\tau \in \mathcal{C}_{n}^{\mathrm{comp}}$. 
Equip $M_{n}$ with the metric for the cusp modulus $\tau$. 
Then $\partial_{0} M_{n}$ is compressible. 
By taking a regular neighborhood 
of the union of $M_{n}$ with the compressing disk, 
we obtain that the ambient hyperbolic 3-manifold 
$\mathbb{H}^{3} / \rho_{\tau}(\pi_{1}(M_{n}))$ is 
the union of $M_{n}$ with the complement of a trivial tangle. 
Hence the manifold $\mathbb{H}^{3} / \rho_{\tau}(\pi_{1}(M_{n}))$ 
is the complement of a Montesinos link 
$L(1/2, \dots , 1/2, r)$, 
where the number of tangles of slope 1/2 is $n+1$. 
The set $\mathcal{C}_{n}^{\mathrm{comp}}$ corresponds to 
the set of slopes $r$ 
such that $L(1/2, \dots , 1/2, r)$ are hyperbolic links. 

The classification of Montesinos links 
by Bonahon and Siebenmann~\cite{bonahon2010new} 
implies that 
all these links but the following exceptions are hyperbolic 
(see also \cite[Section 3.3]{futer2009angled}). 
\begin{itemize}
\item For $n=2$, the four slopes $r= -2, -3/2, -1, \infty$ are excluded. 
\item For $n=3$, the two slopes $r= -2, \infty$ are excluded. 
\item For $n \geq 4$, the slope $r= \infty$ is excluded. 
\end{itemize}
\end{proof}

\begin{rem}
\label{rem:comp}
If an annular cusp of an excluded slope is added to $M_{n}$, 
we obtain one of the manifolds 
$\mathbb{B}_{n+1}, \mathbb{T}_{3}$, and $\mathbb{T}_{4}$. 
If the union of a 3-punctured sphere 
with the compressing disk is an annulus, 
the filled manifold is not hyperbolic. 
\end{rem}

Let us consider the subset 
$\mathcal{C}_{n}^{\mathrm{fin}}$ of $\mathcal{C}_{n}$ 
consisting of the moduli 
that appears in a finite volume hyperbolic 3-manifold. 
Note that 
$\mathcal{C}_{n}^{\mathrm{comp}} \subset \mathcal{C}_{n}^{\mathrm{fin}}$. 
By Theorem~\ref{thm:brooks}, 
the restriction about finite volume 
does not give serious difference on bounds. 

\begin{thm}[Brooks~\cite{brooks1986circle}]
\label{thm:brooks}
Let $\Gamma < \mathrm{PSL}(2, \mathbb{C})$ 
be a geometrically finite Kleinian group. 
Then there exist arbitrarily small quasi-conformal deformations 
$\Gamma_{\epsilon}$ of $\Gamma$, 
such that $\Gamma_{\epsilon}$ is contained in 
the fundamental group of a finite volume hyperbolic 3-manifold. 
\end{thm}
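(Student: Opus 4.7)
The plan is to follow Brooks' circle-packing approach. Let $\Gamma$ be geometrically finite. Its conformal boundary $\Omega(\Gamma)/\Gamma$ is a finite disjoint union of Riemann surfaces of finite type. The strategy is to quasi-conformally deform $\Gamma$ to $\Gamma_{\varepsilon}$ so that each component of the conformal boundary admits a \emph{maximal circle packing} (a packing whose interstices are all curvilinear triangles bounded by three mutually tangent circles), and then to enlarge $\Gamma_{\varepsilon}$ by isometries determined by the packing, producing a discrete group of finite covolume that contains $\Gamma_{\varepsilon}$ with finite index.

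The analytic heart of the argument is the density of circle-packable Riemann surfaces in Teichm\"{u}ller space. By the Koebe--Andreev--Thurston theorem, any triangulation of a punctured surface is realized by an essentially unique circle packing on a particular conformal structure. By refining triangulations and taking limits, one shows that the locus of maximal-circle-packable structures is dense in the Teichm\"{u}ller space of each component of $\Omega(\Gamma)/\Gamma$. The Ahlfors--Bers deformation theory for geometrically finite Kleinian groups then lifts a small Teichm\"{u}ller deformation of the boundary to a small quasi-conformal deformation $\Gamma_{\varepsilon}$ of $\Gamma$ realizing the desired packable conformal structure on the boundary.

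Given such a $\Gamma_{\varepsilon}$, each circle in the packing lifts to a $\Gamma_{\varepsilon}$-invariant collection of round circles in $\widehat{\mathbb{C}} = \partial \mathbb{H}^{3}$, each bounding a totally geodesic plane in $\mathbb{H}^{3}$. Let $\Gamma'$ be the group generated by $\Gamma_{\varepsilon}$ together with the reflections across all these planes. Disjointness of the lifted planes, together with the combinatorics at the interstices (three mutually tangent circles meeting pairwise), allows one to verify the hypotheses of Poincar\'{e}'s polyhedron theorem and conclude that $\Gamma'$ is discrete with an explicit fundamental polyhedron.

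Finally one checks that $\mathbb{H}^{3}/\Gamma'$ has finite volume by analyzing the fundamental domain: it consists of a truncated version of the convex core of $\mathbb{H}^{3}/\Gamma_{\varepsilon}$ (finite volume since $\Gamma_{\varepsilon}$ is geometrically finite, with cusp neighborhoods cut off) together with finitely many finite-volume hyperbolic pieces sitting over the triangular interstices. Passing to the orientation-preserving subgroup of index at most two then exhibits $\Gamma_{\varepsilon}$ inside the fundamental group of a finite-volume orientable hyperbolic 3-manifold. The main obstacle is the density statement for circle-packable conformal structures together with the quantitative control on the resulting quasi-conformal distortion; this is the essential technical content of Brooks' paper, requiring an iterative refinement of triangulations combined with compactness arguments in Teichm\"{u}ller space.
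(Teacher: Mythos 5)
This statement is quoted from Brooks~\cite{brooks1986circle}; the paper gives no proof of it and simply imports it as a black box to deduce Corollary~\ref{cor:dense}, so there is no in-paper argument to compare yours against. Your outline is a faithful reconstruction of Brooks' actual circle-packing proof: deform the conformal boundary to a packable structure, lift via Ahlfors--Bers, adjoin reflections in the geodesic planes over the packing circles, and check discreteness and finite covolume from the triangular interstices. Be aware, though, that the entire content of the theorem lives in the step you defer --- the density of maximally packable structures together with control on the quasi-conformal distortion --- and Brooks establishes this not by refining Koebe--Andreev--Thurston triangulations but by parametrizing the moduli of the non-triangular interstices of a partial packing and showing that the parameters for which the packing closes up are dense; as written, your density claim is asserted rather than proved. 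A smaller point: the group $\Gamma'$ generated by $\Gamma_{\epsilon}$ and the reflections has torsion, so its orientation-preserving subgroup uniformizes a finite-volume \emph{orbifold}; matching the paper's phrasing (``fundamental group of a finite volume hyperbolic 3-manifold'') requires an extra word about passing to a torsion-free lattice still containing $\Gamma_{\epsilon}$, or about reading ``manifold'' loosely.
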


We apply Theorem~\ref{thm:brooks} 
to $\Gamma = \rho_{\tau}(\pi_{1}(M_{n}))$. 

\begin{cor}
\label{cor:dense}
The set $\mathcal{C}_{n}^{\mathrm{fin}}$ is dense in $\mathcal{C}_{n}$. 
\end{cor}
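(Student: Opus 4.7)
The plan is to combine Brooks' theorem with the explicit geometric description of $\mathcal{C}_{n}$ laid out just before. Since $\mathcal{C}_{n}^{\mathrm{comp}} \subset \mathcal{C}_{n}^{\mathrm{fin}}$ is already observed, it suffices to approximate an arbitrary $\tau \in \mathcal{C}_{n}^{\mathrm{incomp}}$ by elements of $\mathcal{C}_{n}^{\mathrm{fin}}$. I would split into cases according to the preimage $p = \iota^{-1}(\tau) \in \overline{\mathcal{T}_{0,4}}$.

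First suppose $p$ lies in $\mathcal{T}_{0,4}$ or is a rational point of $\partial \mathcal{T}_{0,4}$. By the description given after Theorem~\ref{thm:incomp}, the Kleinian group $\Gamma = \rho_{\tau}(\pi_{1}(M_{n}))$ is then geometrically finite (with at most an extra annular cusp on $\partial_{0} M_{n}$). Apply Theorem~\ref{thm:brooks} to obtain arbitrarily small quasi-conformal deformations $\Gamma_{\epsilon}$ sitting inside $\pi_{1}(N_{\epsilon})$ for a finite volume hyperbolic 3-manifold $N_{\epsilon}$. Because quasi-conformal deformations preserve parabolicity and the conjugacy classes of boundary subgroups, $\Gamma_{\epsilon}$ is the image of a representation of $\pi_{1}(M_{n})$ of the same form $\rho_{\tau_{\epsilon}}$ studied in Proposition~\ref{prop:modulus}, and the totally geodesic 3-punctured spheres $\Sigma_{1}, \dots, \Sigma_{n}$ persist (up to isotopy, then straightened via Adams) as embedded 3-punctured spheres in $N_{\epsilon}$. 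This gives $\tau_{\epsilon} \in \mathcal{C}_{n}^{\mathrm{fin}}$, and continuity of $\tau \mapsto \rho_{\tau}$ from the explicit matrices in the proof of Proposition~\ref{prop:modulus} forces $\tau_{\epsilon} \to \tau$ as $\epsilon \to 0$.

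Next suppose $p$ is an irrational point of $\partial \mathcal{T}_{0,4}$. Then $\mathbb{H}^{3}/\Gamma$ is geometrically infinite, so Brooks' theorem cannot be applied directly. Instead, choose a sequence $p_{k} \in \mathcal{T}_{0,4}$ with $p_{k} \to p$ in $\overline{\mathcal{T}_{0,4}}$; continuity of $\iota$ gives $\iota(p_{k}) \to \tau$, and each $\iota(p_{k})$ falls into the geometrically finite case just treated. A diagonal argument then produces a sequence in $\mathcal{C}_{n}^{\mathrm{fin}}$ converging to $\tau$.

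The main obstacle is the claim that a small quasi-conformal deformation of $\Gamma$ produces a modulus $\tau_{\epsilon}$ close to $\tau$. One must verify that the meridian and longitude that define the modulus of the adjacent cusp are determined by the (embedded) 3-punctured spheres, and hence are not affected when $\Gamma_{\epsilon}$ is enlarged to $\pi_{1}(N_{\epsilon})$: the parabolic subgroup of $\pi_{1}(N_{\epsilon})$ at that cusp contains the rank-two parabolic subgroup of $\Gamma_{\epsilon}$ with finite index, and the specific generators $y,w$ (in the notation of Proposition~\ref{prop:modulus}) that compute $\tau$ are intrinsic to the $\Sigma_{i}$. Granted this, the entries of $\rho_{\tau_{\epsilon}}(w)$ and $\rho_{\tau_{\epsilon}}(z)$ depend continuously on the deformation, and inverting the formulas in Proposition~\ref{prop:modulus} yields $\tau_{\epsilon} \to \tau$.
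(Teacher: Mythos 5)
Your proposal is correct and follows essentially the same route as the paper, which simply applies Theorem~\ref{thm:brooks} to $\Gamma = \rho_{\tau}(\pi_{1}(M_{n}))$ and leaves the remaining verifications implicit. Your additional care — separating out the geometrically infinite case (irrational points of $\partial \mathcal{T}_{0,4}$, where Brooks' theorem does not apply directly) and reducing it to the geometrically finite case by approximation inside $\overline{\mathcal{T}_{0,4}}$, and checking that the modulus varies continuously under small quasi-conformal deformation — fills in details the paper omits rather than changing the argument.
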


\begin{prop}
\label{prop:bmodulus}
If 3-punctured spheres of the type $A_{n}$ 
are contained in the ones of the type $B_{2n}$, 
the adjacent cusp modulus $\tau$ is equal to $2i$. 
In particular, $2i \in \partial \mathcal{C}_{n}^{\mathrm{incomp}}$ 
for any $n \geq 2$. 
\end{prop}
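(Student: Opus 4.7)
The plan is to compute $\tau$ explicitly from the polyhedral decomposition of $\mathbb{B}_{n+1}$.

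First, if 3-punctured spheres of type $A_n$ in a hyperbolic 3-manifold $M$ are contained in those of type $B_{2n}$, then a regular neighborhood in $M$ of the $B_{2n}$-union together with its adjacent torus cusps and the two additional isolated 3-punctured spheres is homeomorphic to $\mathbb{B}_{n+1}$, and this copy embeds isometrically because every 3-punctured sphere involved is totally geodesic. By Proposition~\ref{prop:modulus}, the modulus $\tau$ is the same at every interior cusp adjacent to two consecutive $A_n$-type spheres, so it suffices to compute $\tau$ at one such cusp of $\mathbb{B}_{n+1}$.

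Second, $\mathbb{B}_{n+1}$ inherits a decomposition into $n+1$ regular ideal octahedra: $\mathbb{W}_{n+1}$ is an $(n+1)$-fold cyclic cover of the Whitehead link complement $\mathbb{W}_1$, which is itself a single regular ideal octahedron, so $\mathbb{W}_{n+1}$ is built from $n+1$ octahedra glued cyclically along triangular faces (lifts of 3-punctured spheres), and cutting along one such face yields $\mathbb{B}_{n+1}$ as a linear chain of $n+1$ octahedra, extending Figure~\ref{fig:u3ps-b1decomp}. An interior cusp $C$ adjacent to two consecutive $A_n$-type 3-punctured spheres arises as the identification of two ideal vertices (one from each of the two adjacent octahedra), so its Euclidean cross-section is assembled by gluing two unit-square vertex links along a common edge, forming a $1 \times 2$ rectangle.

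Third, using the convention of Section~\ref{section:parameter} that each $A_n$-sphere meets exactly one longitude, together with Figure~\ref{fig:u3ps-b1decomp}, I trace the boundary loops of the adjacent $A_n$-spheres onto the edges of the rectangle; the meridian is the short side (length $1$) while the longitude is the long side (length $2$), perpendicular to the meridian. Rescaling so that the meridian corresponds to $1 \in \mathbb{C}$, the longitude corresponds to $2i$, giving $\tau = 2i$. For the final assertion, the totally geodesic boundary of $\mathbb{B}_{n+1}$ corresponds in the parametrization of Theorem~\ref{thm:incomp} to a rational point of $\partial \mathcal{T}_{0,4}$ at the slope of the cut 3-punctured sphere, so by the discussion following Theorem~\ref{thm:incomp} and Remark~\ref{rem:comp}, $\tau = 2i$ lies on $\partial \mathcal{C}_n^{\mathrm{incomp}}$.

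The main obstacle will be the explicit geometric identification in the octahedral decomposition: verifying that the interior cusp cross-section is exactly a $1 \times 2$ rectangle (as opposed to a more complicated gluing with extra identifications coming from the Whitehead face pairings) and distinguishing meridian from longitude so as to obtain $\tau = 2i$ rather than $\tau = i/2$. This requires concretely tracking the face identifications of the Whitehead link through the cyclic cover and the final cut along the blue 3-punctured sphere.
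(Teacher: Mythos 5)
Your argument is essentially the paper's own (first) proof: the paper likewise invokes the decomposition of $\mathbb{B}_{n+1}$ into $n+1$ regular ideal octahedra, observes that each adjacent cusp has a fundamental domain made of two Euclidean unit squares with meridian of length $1$ and longitude of length $2$, and concludes $\tau = 2i$. (The paper additionally supplies an independent algebraic check, showing the commutator $yzy^{-1}z^{-1}$ is parabolic forces $|2+16\tau^{-2}|=2$ and hence $\tau=2i$, which would also resolve the meridian/longitude ambiguity you flag as the main remaining obstacle.)
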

\begin{proof}
The 3-punctured spheres of the type $B_{2n}$ are contained 
in the manifold $\mathbb{B}_{n+1}$, 
which is decomposed into $n+1$ regular ideal octahedra. 
This is obtained from the decomposition of $\mathbb{B}_{1}$ 
into a regular ideal octahedron shown in Figure~\ref{fig:u3ps-b1decomp}. 
We can construct a fundamental domain of each adjacent cusp by two Euclidean unit squares. 
Then the lengths of the meridian and longitude are respectively equal to 1 and 2. 
Hence the modulus $\tau$ is equal to $2i$. 

We have another proof by computing the representation. 
We use the elements 
\[
y = 
\begin{pmatrix}
1 & 0 \\ 
2 & 1
\end{pmatrix} \quad \text{and} \quad
z =
\begin{pmatrix}
1 & 2\tau^{-1} \\ 
0 & 1
\end{pmatrix}
\]
shown in Figure~\ref{fig:u3ps-modulus}. 
Suppose that $y$ and $z$ represents meridians. 
If 3-punctured spheres of the type $A_{2}$ 
are contained in the ones of the type $B_{4}$, 
the element 
\[
yzy^{-1}z^{-1} = 
\begin{pmatrix}
1-4\tau^{-1} & 8\tau^{-2} \\ 
-8\tau^{-1} & 1+4\tau^{-1}+16\tau^{-2}
\end{pmatrix}
\]
is parabolic. 
Hence $|\mathrm{tr}(yzy^{-1}z^{-1})| = |2+16\tau^{-2}| = 2$. 
Since $\mathrm{Im} (\tau ) >0$, 
we have $\tau = 2i$. 
\end{proof}

\begin{prop}
\label{prop:contain}
It holds that 
\[
\mathcal{C}_{n+1} \subset \mathcal{C}_{n}, \
\mathcal{C}_{n+1}^{\mathrm{incomp}} \subset \mathcal{C}_{n}^{\mathrm{incomp}}, \ \text{and} \
\mathcal{C}_{n+2} \subset \mathcal{C}_{n}^{\mathrm{incomp}}.
\] 
\end{prop}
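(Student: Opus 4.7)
The three inclusions are handled separately, using Proposition~\ref{prop:modulus} to carry the adjacent-cusp modulus under truncation and a $\pi_1$-injectivity argument for the regular neighborhoods $M_n$. For $\mathcal{C}_{n+1} \subset \mathcal{C}_n$: given $\tau \in \mathcal{C}_{n+1}$, pick an orientable hyperbolic 3-manifold $M$ realizing an $A_{n+1}$-configuration $\Sigma_1, \dots, \Sigma_{n+1}$ with adjacent-cusp modulus $\tau$. Truncating to $\Sigma_1, \dots, \Sigma_n$ gives an $A_n$-pattern in the same $M$, and iterated application of Proposition~\ref{prop:modulus} to each consecutive pair $(\Sigma_i, \Sigma_{i+1})$ shows that every adjacent cusp of the sub-configuration still has modulus $\tau$; hence $\tau \in \mathcal{C}_n$.

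For $\mathcal{C}_{n+1}^{\mathrm{incomp}} \subset \mathcal{C}_n^{\mathrm{incomp}}$: with the same setup and the extra hypothesis that $\rho_\tau$ is faithful on $\pi_1(M_{n+1})$, the representation attached to the $A_n$-sub-configuration factors as $\pi_1(M_n) \to \pi_1(M_{n+1}) \xrightarrow{\rho_\tau} \mathrm{PSL}(2, \mathbb{C})$, so faithfulness reduces to injectivity of the inclusion-induced map. Topologically, $M_{n+1}$ is built from $M_n$ by attaching a building block $W$ along the 4-punctured frontier $\partial_0 M_n$, where $W$ consists of a regular neighborhood of $\Sigma_{n+1}$ together with the cusp that transitions from non-adjacent in $A_n$ to adjacent in $A_{n+1}$. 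A van Kampen argument yields the desired injection once $\partial_0 M_n$ is shown to be incompressible in both $M_n$ and $W$; incompressibility in $M_n$ is intrinsic (witnessed for instance by the totally geodesic realization $\tau = 2i$ from Proposition~\ref{prop:bmodulus}), and incompressibility in $W$ follows from a direct analysis of the elementary topology of this standard building block.

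For $\mathcal{C}_{n+2} \subset \mathcal{C}_n^{\mathrm{incomp}}$: given $\tau \in \mathcal{C}_{n+2}$ realized by $\Sigma_1, \dots, \Sigma_{n+2}$ in a hyperbolic 3-manifold $M$, I would select the central sub-configuration $\Sigma_2, \dots, \Sigma_{n+1}$ of type $A_n$ with regular neighborhood $M_n \subset M$. Since the holonomy $\pi_1(M) \to \mathrm{PSL}(2, \mathbb{C})$ is faithful, it suffices to prove that $\pi_1(M_n) \to \pi_1(M)$ is injective, or equivalently that $\partial_0 M_n$ is incompressible in $M$. The four punctures of $\partial_0 M_n$ lie in two cusps, each met essentially by one of the buffer spheres $\Sigma_1$ and $\Sigma_{n+2}$ sitting on opposite sides of $\partial_0 M_n$. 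Any hypothetical compressing disk is contained on one side, and an innermost-disk argument against the totally geodesic buffer sphere on that side produces a contradiction with its $\pi_1$-injectivity. The main obstacle throughout is this incompressibility verification; the contrast between the $n+1$ and $n+2$ cases is that with only one extra sphere the faithfulness hypothesis on $\pi_1(M_{n+1})$ is needed to control the incompressibility of $\partial_0 M_n$, whereas two buffer spheres provide independent symmetric geometric obstructions that suffice without any faithfulness hypothesis on the larger configuration.
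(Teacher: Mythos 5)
Your reformulation of the second and third inclusions as $\pi_1$-injectivity of the inclusions $M_n \hookrightarrow M_{n+1}$ and $M_n \hookrightarrow M$ is the right one, and the first inclusion is fine. The genuine problem is in your argument for $\mathcal{C}_{n+2} \subset \mathcal{C}_{n}^{\mathrm{incomp}}$. The four boundary circles of the frontier $F = \partial_{0} M_{n}$ of the central sub-configuration lie two apiece in the two outer cusps, and the traces $F \cap \Sigma_{1}$ and $F \cap \Sigma_{n+2}$ of the buffer spheres are two disjoint properly embedded arcs in $F$, each joining the pair of circles sitting in one outer cusp. Cutting the 4-punctured sphere $F$ along these two arcs leaves an annulus, so there is exactly one slope of essential simple closed curve on $F$ that can be isotoped off both buffer spheres: the curve separating the two circles of one outer cusp from the two of the other. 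A compressing disk with that boundary slope never meets $\Sigma_{1}$ or $\Sigma_{n+2}$, so your ``innermost-disk argument against the buffer sphere'' says nothing in exactly this case; it has to be excluded by a different mechanism (capping such a disk with half of $F$ gives an annulus with both boundary curves of a single slope on one cusp torus, and one must rule out both the compressible and the boundary-parallel possibilities by hand --- this is precisely the excluded slope $r=\infty$ of Theorem~\ref{thm:comp} and Remark~\ref{rem:comp}). Even for the remaining slopes, arcs of $D \cap \Sigma_{1}$ require an outermost-arc/boundary-compression argument showing that the annular piece $\Sigma_{1} \setminus M_{n}$ is boundary-incompressible rel $F$; mere $\pi_{1}$-injectivity of $\Sigma_{1}$ only removes circle components of the intersection. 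A smaller issue: asserting that incompressibility of $\partial_{0}M_{n}$ in $M_{n}$ is ``witnessed'' by the realization $\tau = 2i$ conflates injectivity of $\rho_{\tau}$ on $\pi_{1}(M_{n})$ with injectivity of $\pi_{1}(\partial_{0}M_{n}) \to \pi_{1}(M_{n})$; these are different statements.

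The paper's route is entirely different and sidesteps this topology. It argues by contraposition from Theorem~\ref{thm:comp}: if $\tau \in \mathcal{C}_{n}^{\mathrm{comp}}$, then $\rho_{\tau}(\pi_{1}(M_{n}))$ is already the fundamental group of an explicitly identified finite-volume manifold, the Montesinos link complement $L(1/2,\dots,1/2,r)$, which has only $n+1$ or $n+2$ cusps; a hyperbolic manifold carrying an $A_{n+2}$ configuration with the same adjacent modulus would contain this lattice and hence be covered by that link complement, which has too few cusps to support $A_{n+2}$. Either adopt that counting argument or close the two holes above; as written, the key inclusion $\mathcal{C}_{n+2} \subset \mathcal{C}_{n}^{\mathrm{incomp}}$ is not established.
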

\begin{proof}
The first two containments are obvious. 
If $\tau \in \mathcal{C}_{n}^{\mathrm{comp}}$, 
the number of the cusps of the ambient hyperbolic 3-manifold is $n+1$ or $n+2$ 
by the proof of Theorem~\ref{thm:comp}. 
Then $\tau \notin \mathcal{C}_{n+2}$. 
Therefore we have $\mathcal{C}_{n+2} \subset \mathcal{C}_{n}^{\mathrm{incomp}}$. 
\end{proof}

The sets $\mathcal{C}_{n}$ become arbitrarily smaller 
as $n$ increases. 

\begin{thm}
\label{thm:boundlim}
For $n \geq 2$, 
let $\tau_{n} \in \mathcal{C}_{n}$. 
Then $\lim\limits_{n \to \infty} \tau_{n} = 2i$. 
\end{thm}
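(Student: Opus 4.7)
The plan is to combine the nested containments of Proposition~\ref{prop:contain}, the closedness of $\mathcal{C}_m^{\mathrm{incomp}}$ from Theorem~\ref{thm:incomp}, the classification of infinite unions in Theorem~\ref{thm:infinite}, and the local modulus computation of Proposition~\ref{prop:bmodulus}.

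First, by Proposition~\ref{prop:modulusbound} the sequence $(\tau_n)_{n\geq 2}$ lies in a fixed compact subset of $\mathbb{C}\setminus\{0\}$, so it will suffice to show that every convergent subsequence of $(\tau_n)$ has limit $2i$. Given such a subsequence $\tau_{n_k}\to \tau_\infty$, I would fix $m\geq 2$ and observe that for all $n\geq m+2$, iterating Proposition~\ref{prop:contain} gives $\tau_n \in \mathcal{C}_n \subset \mathcal{C}_{n-2}^{\mathrm{incomp}} \subset \cdots \subset \mathcal{C}_m^{\mathrm{incomp}}$. Since $\mathcal{C}_m^{\mathrm{incomp}}$ is a closed disk by Theorem~\ref{thm:incomp}, the limit $\tau_\infty$ lies in $\mathcal{C}_m^{\mathrm{incomp}}$. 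Hence $\tau_\infty \in \bigcap_{m\geq 2} \mathcal{C}_m^{\mathrm{incomp}}$, and the remaining task is to prove that this intersection is exactly $\{2i\}$.

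For $\tau$ in this intersection, the representations $\rho_\tau\colon \pi_1(M_m)\to \mathrm{PSL}(2,\mathbb{C})$ are faithful and discrete for every $m$. When normalized consistently around a fixed adjacent cusp they respect the natural inclusions $\pi_1(M_m)\hookrightarrow \pi_1(M_{m+1})$, so I would assemble them into a single faithful discrete representation $\rho_{\tau,\infty}$ of the direct limit $\pi_1(M_\infty)$. Then the quotient $N = \mathbb{H}^3/\rho_{\tau,\infty}(\pi_1(M_\infty))$ is an orientable hyperbolic 3-manifold containing an embedded infinite linear chain of totally geodesic 3-punctured spheres with every adjacent cusp of modulus $\tau$. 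By Theorem~\ref{thm:infinite}, the connected component of the union of all totally geodesic 3-punctured spheres of $N$ containing this chain must be of type $B_\infty$ or $\Whi_\infty$. In either case some two consecutive 3-punctured spheres of the chain sit inside a $B_4$ subconfiguration, so the local trace calculation of Proposition~\ref{prop:bmodulus} forces $\tau=2i$.

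The main obstacle will be the last step, specifically verifying that the direct-limit representation really produces an ambient hyperbolic 3-manifold in which the 3-punctured spheres remain genuinely embedded and form an infinite connected component, so that Theorem~\ref{thm:infinite} legitimately applies. Once this is established, the purely local computation of Proposition~\ref{prop:bmodulus}, which is insensitive to how the chain extends globally, completes the argument.
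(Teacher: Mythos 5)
Your reduction is fine as far as it goes: using Proposition~\ref{prop:modulusbound} for compactness, the nesting $\mathcal{C}_{n}\subset\mathcal{C}_{n-2}^{\mathrm{incomp}}\subset\cdots\subset\mathcal{C}_{m}^{\mathrm{incomp}}$ from Proposition~\ref{prop:contain}, and the compactness of $\mathcal{C}_{m}^{\mathrm{incomp}}$ (a continuous image of $\overline{\mathcal{T}_{0,4}}$), you correctly reduce the theorem to showing $\bigcap_{m}\mathcal{C}_{m}^{\mathrm{incomp}}=\{2i\}$. The gap is in your proof of that last equality: it is circular. You invoke Theorem~\ref{thm:infinite} to conclude that the infinite linear chain in $N=\mathbb{H}^{3}/\rho_{\tau,\infty}(\pi_{1}(M_{\infty}))$ must be of type $B_{\infty}$ or $\Whi_{\infty}$ and hence contain a $B_{4}$ subconfiguration. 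But in this paper Theorem~\ref{thm:infinite} is proved \emph{from} Theorem~\ref{thm:boundlim}: the only mechanism for excluding a pure ``$A_{\infty}$'' configuration (an infinite linear chain with no additional 3-punctured spheres) is precisely the statement that the adjacent modulus of such a chain must be $2i$, which is the limiting case of the theorem you are trying to prove. The classification in Section~\ref{section:proof} alone does not rule out an infinite linear chain with modulus $\tau\neq 2i$, so your appeal to Theorem~\ref{thm:infinite} begs the question. (There are also secondary issues you flag yourself --- discreteness of the direct-limit group follows from J{\o}rgensen--Chuckrow for increasing unions of nonelementary discrete groups, but embeddedness of the 3-punctured spheres in the infinite quotient is not automatic --- yet these are repairable; the circularity is not.)

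The paper's actual argument is quite different and avoids this entirely. By Corollary~\ref{cor:dense} one may assume $\tau_{n}\in\mathcal{C}_{n}^{\mathrm{fin}}$. Lemma~\ref{lem:fillb2n} then exhibits the ambient finite-volume manifold $M$ as a Dehn filling of a hyperbolic manifold $M_{0}$ containing the $A_{n}$ chain inside a $B_{2n}$ configuration, along a slope of normalized length at least $\sqrt{n+1}/2$. In $M_{0}$ the adjacent modulus is exactly $2i$ by Proposition~\ref{prop:bmodulus}, and Theorem~\ref{thm:filling} (Hodgson--Kerckhoff plus the Brock--Bromberg drilling theorem) gives a $J$-bilipschitz comparison of thick parts with $J\to 1$ as the slope length tends to infinity; since $\sqrt{n+1}/2\to\infty$, this forces $\tau_{n}\to 2i$. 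If you want to salvage your strategy, you would need an independent proof that a faithful discrete chain of modulus $\tau$ of unbounded length forces $\tau=2i$ --- but that is essentially Lemma~\ref{lem:fillb2n} plus Theorem~\ref{thm:filling} again, not Theorem~\ref{thm:infinite}.
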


By Corollary~\ref{cor:dense}, we may assume that 
$\tau_{n} \in \mathcal{C}_{n}^{\mathrm{fin}}$. 
Theorem~\ref{thm:boundlim} follows from 
Theorem~\ref{thm:filling} and Lemma~\ref{lem:fillb2n}. 
Theorem~\ref{thm:filling} states that 
a Dehn filling along a long slope gives 
a manifold with metric uniformly close to the original one 
in their thick parts. 
Here we use the normalized length of a slope, 
i.e. it is measured after rescaling the metric on the torus cusp 
to have unit area. 
We recall that 
$M_{[\epsilon, \infty)}$ is the $\epsilon$-thick part of a hyperbolic 3-manifold $M$. 
It is crucial that the estimates in Theorem~\ref{thm:filling} 
do not depend on manifolds.

\begin{thm}
\label{thm:filling}
For any $J>1$, there is a constant $K \geq 4\sqrt{2}\pi$ 
satisfying the following condition. 
Suppose that $M$ is obtained by a Dehn filling on a cusp 
of a finite volume hyperbolic 3-manifold $M_{0}$
along a slope of normalized length $L$ at least $K$. 
Then 
\begin{itemize}
\item[(i)]
the filled core loop 
(i.e. the core loop in the filled solid torus of the Dehn filling) 
is isotopic in $M$ to a closed geodesic 
of length $\leq 2\pi / (L^{2}-16\pi^{2})$, and 
\item[(ii)] 
there is a $J$-bilipschitz diffeomorphism 
$\varphi \colon (M_{0})_{[\epsilon, \infty)} \to M_{[\epsilon, \infty)}$ 
isotopic to 
the restriction of the natural inclusion $M_{0} \hookrightarrow M$. 
\end{itemize}
\end{thm}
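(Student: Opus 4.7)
The plan is to invoke the Hodgson--Kerckhoff cone-deformation machinery. For $\alpha \in [0, 2\pi]$, I would consider the one-parameter family of hyperbolic cone-manifold structures $M_\alpha$ on $M$ in which the filled core curve $\gamma$ is a cone singularity of cone angle $\alpha$. At $\alpha = 0$ the tube around $\gamma$ degenerates to a rank-two cusp, recovering the complete hyperbolic structure on $M_0$, and at $\alpha = 2\pi$ one recovers the smooth hyperbolic structure on $M$. The Hodgson--Kerckhoff local rigidity theorem together with their deformation theory produces such a smooth family provided the normalized length $L$ at the filled cusp of $M_0$ is sufficiently large; this is where the hypothesis $L \geq K$ enters.

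For part (i), I would combine the Schl\"afli-type variation formula with the Hodgson--Kerckhoff differential inequality relating the singular length $\ell(\alpha)$, the cone angle, and the tube radius along the deformation. Starting from $\ell(0) = 0$ and integrating this ODE up to $\alpha = 2\pi$ yields the explicit bound $\ell(2\pi) \leq 2\pi/(L^2 - 16\pi^2)$. The assumption $K \geq 4\sqrt{2}\pi$ guarantees $L^2 - 16\pi^2 \geq 16\pi^2 > 0$, which is exactly what is needed to keep the differential inequality valid throughout the interval and to make the bound a meaningful positive quantity.

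For part (ii), I would use Hodgson--Kerckhoff's $L^2$-estimates on the harmonic representative of the infinitesimal deformation cocycle. On the complement of a Margulis tube about the cone singularity, a Sobolev-type pointwise bound controls the operator norm of the infinitesimal change of metric by the $L^2$-norm of the harmonic form on a collar of the tube, which in turn is controlled solely by $\ell(\alpha)$. Integrating this pointwise bound in $\alpha$ across $[0, 2\pi]$ and exponentiating produces a diffeomorphism $\varphi \colon (M_0)_{[\epsilon, \infty)} \to M_{[\epsilon, \infty)}$ whose bilipschitz constant tends to $1$ as $L \to \infty$, and one then enlarges $K$ so that this constant is at most $J$. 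The isotopy statement follows because the diffeomorphism is constructed from the identity on the complement of the deformation region.

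The main obstacle is extracting bilipschitz constants that are genuinely \emph{uniform}, depending only on $J$ and not on the underlying manifold $M_0$. The point is that the Hodgson--Kerckhoff harmonic estimates are local near the cone locus: the relevant $L^2$ norm of the deformation form on any annular region at distance $\geq \epsilon$ from the singularity is bounded in terms of $\ell(\alpha)$ and the geometry of the tube alone, with no global dependence on $M_0$. Establishing this local control, together with the extension of the cone-manifold deformation to a harmonic deformation decaying into the thick part, is the technical core of the argument, and is where one must follow Hodgson--Kerckhoff (and its refinements by Bromberg and by Magid) rather than any naive Margulis-tube comparison.
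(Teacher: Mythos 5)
Your proposal follows essentially the same route as the paper, which simply attributes part (i) to the Hodgson--Kerckhoff cone-deformation theory (with the explicit bound as unified by Magid) and part (ii) to the Brock--Bromberg drilling theorem; your sketch is an accurate unpacking of the internals of those cited results rather than a different argument. The approach is correct.
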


The part (i) follows from the work of 
Hodgson and Kerckhoff~\cite{hodgson2005universal}. 
The part (ii) 
follows from the drilling theorem due to 
Brock and Bromberg~\cite{brock2004density}, 
which requires that the filled core loop is short. 
Magid~\cite[Section 4]{magid2012deformation} 
unified their arguments 
and gave the explicit bound in (i).

\begin{lem}
\label{lem:fillb2n}
Suppose that 
an orientable finite volume hyperbolic 3-manifold $M$ contains 
3-punctured spheres $\Sigma_{1}, \dots , \Sigma_{n}$ of the type $A_{n}$. 
Then there is a finite volume hyperbolic 3-manifold $M_{0}$ 
satisfying the following properties: 
\begin{itemize}
\item 
The manifold $M$ is obtained by a Dehn filling on a cusp of $M_{0}$ 
along a slope of normalized length at least $\sqrt{n+1}/2$. 
\item 
The 3-punctured spheres $\Sigma_{1}, \dots , \Sigma_{n}$ come from 
ones contained in 3-punctured spheres of the type $B_{2n}$ in $M_{0}$. 
\end{itemize}
\end{lem}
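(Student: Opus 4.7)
The plan is to construct $M_0$ by drilling a specific closed geodesic out of $M$, and then to estimate the normalized length of the resulting Dehn filling slope using the ideal octahedral decomposition of $\mathbb{B}_{n+1}$. Let $M_n \subset M$ and its 4-punctured sphere frontier $\partial_0 M_n$ be as defined before Theorem~\ref{thm:incomp}. By Remark~\ref{rem:comp}, there is an excluded slope $r_0$ (one of the slopes listed in the proof of Theorem~\ref{thm:comp}) such that attaching an annular cusp of slope $r_0$ to $M_n$ produces $\mathbb{B}_{n+1}$. Let $\alpha \subset \partial_0 M_n$ be the essential simple closed curve of slope $r_0$. In $M$, the curve $\alpha$ represents a nontrivial, non-peripheral conjugacy class in $\pi_1(M)$, so it is freely homotopic to a unique closed geodesic $\gamma$; the degenerate case where $\alpha$ is peripheral can only occur when $M$ itself is already isometric to the Dehn filling of $\mathbb{B}_{n+1}$ giving the boundary statement, and can be handled directly.

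I would set $M_0 = M \setminus N(\gamma)$, the finite-volume hyperbolic 3-manifold obtained by drilling $\gamma$. Then $M$ is recovered from $M_0$ by Dehn filling along the meridian slope $\mu$ of $\gamma$. The submanifold of $M_0$ swept out by $M_n$ together with the new torus boundary $\partial N(\gamma)$ has the topological type of $M_n$ with an annular cusp of slope $r_0$ attached along $\alpha$, which by Remark~\ref{rem:comp} is $\mathbb{B}_{n+1}$. Hence $M_0 \supset \mathbb{B}_{n+1} \supset B_{2n}$, and the $\Sigma_1, \dots, \Sigma_n$ extend inside $M_0$ to a $B_{2n}$ configuration containing the additional $n$ blue 3-punctured spheres.

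The main step is the normalized-length estimate for $\mu$. I would generalize the decomposition of $\mathbb{B}_1$ shown in Figure~\ref{fig:u3ps-b1decomp} to a decomposition of $\mathbb{B}_{n+1}$ into $n+1$ regular ideal octahedra glued in a linear chain, and then read off the Euclidean cross-section of the cusp coming from $\gamma$. Each octahedron in the chain contributes a unit-square Euclidean block to this cross-section, so the cusp area grows linearly in $n+1$, while the filling meridian $\mu$ wraps across all $n+1$ blocks and so has length also linear in $n+1$. Combining these, $\mathrm{len}(\mu)/\sqrt{\mathrm{area}}$ scales like $\sqrt{n+1}$, and the explicit dimensions in $\mathbb{B}_1$ are what pin down the constant to $1/2$.

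The main obstacle is making the geometric computation rigorous. Concretely, one must verify that the Euclidean contributions of the $n+1$ octahedra assemble into the cusp cross-section in the expected way, and one must control the discrepancy coming from the fact that the $A_n$ cusp modulus $\tau$ in $M$ need not equal the value $2i$ realized inside $\mathbb{B}_{n+1}$ (so the embedding $\mathbb{B}_{n+1} \hookrightarrow M_0$ is only topological, while the relevant cusp geometry must be extracted from the combinatorial gluing rather than read off from an isometric copy of $\mathbb{B}_{n+1}$).
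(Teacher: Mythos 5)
Your overall strategy---drill a curve so that the $A_{n}$ configuration completes to a $B_{2n}$ configuration, then read off the new cusp's shape from the octahedral decomposition of $\mathbb{B}_{n+1}$---is the same as the paper's, but two steps are genuinely incomplete. First, the hyperbolicity of $M_{0}$. You replace the curve $\alpha \subset \partial_{0} M_{n}$ by its geodesic representative $\gamma$ and declare $M \setminus N(\gamma)$ to be ``the finite-volume hyperbolic 3-manifold obtained by drilling $\gamma$.'' But the closed geodesic freely homotopic to $\alpha$ need not be embedded, and even when it is, it need not be isotopic to $\alpha$; without that isotopy, the claim that $M_{n}$ together with $\partial N(\gamma)$ has the topological type of $M_{n}$ with an annular cusp attached along $\alpha$ (hence is $\mathbb{B}_{n+1}$) does not follow. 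The paper instead drills the topological curve itself, proves that the complement is irreducible by an innermost-disk argument using the 2-punctured disk $\Sigma$ bounded by $\gamma$ and two meridians of an adjacent cusp, and then confronts the possibility that the complement is still not hyperbolic: it passes to the JSJ piece $M_{0}$ containing $\Sigma_{1}, \dots , \Sigma_{n}$, shows that exactly one JSJ torus bounds this piece, and recovers $M$ by filling that torus. Your proposal has no mechanism for the case where the drilled manifold is reducible or toroidal.

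Second, the length estimate, which you leave as an acknowledged obstacle, is resolved by an ingredient you do not invoke. The worry that the embedding $\mathbb{B}_{n+1} \hookrightarrow M_{0}$ is ``only topological'' because the $A_{n}$ modulus in $M$ need not be $2i$ is a red herring: the estimate is made in the metric of $M_{0}$, not of $M$, and in $M_{0}$ the two isolated 3-punctured spheres bounding the $B_{2n}$ configuration are totally geodesic, so cutting along them yields a hyperbolic manifold with totally geodesic boundary homeomorphic to $\mathbb{B}_{n+1}$; by rigidity its metric is the standard one, and the portion $C \cap \mathbb{B}_{n+1}$ of the new cusp is exactly $4(n+1)$ Euclidean unit squares with the boundary of a blue 3-punctured sphere as the length-4 short direction. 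The remaining part of the cusp cross-section (the region $F$ of height $r \geq 0$ in the paper) only adds area, which only improves the bound. Finally, your phrase ``the filling meridian wraps across all $n+1$ blocks'' presupposes that the filling slope is not the short slope; this must be ruled out (the paper does so by noting that filling along the boundary slope of the blue 3-punctured sphere would contradict hyperbolicity of $M$), after which any other slope has unnormalized length at least $4(n+1+r)/4 = n+1+r$ and hence normalized length at least $\sqrt{n+1+r}/2 \geq \sqrt{n+1}/2$.
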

\begin{proof}
Let $\gamma$ be a loop in $M$ such that 
$\gamma$ and two meridians of an adjacent cusp 
bound a 2-punctured disk $\Sigma$ 
as shown in Figure~\ref{fig:u3ps-drill}. 
Let $N(\gamma)$ be an open regular neighborhood of $\gamma$. 
Assume that $M \setminus N(\gamma)$ contains an essential sphere $S$. 
Since $M$ is irreducible, the sphere $S$ bounds a ball in $M$. 
Note that the cusps of $M$ are incompressible. 
We apply the standard argument 
to reduce the intersection of surfaces in a 3-manifold 
by considering innermost intersection. 
Then by isotoping $S$ in $M \setminus N(\gamma)$, 
we may assume that 
$S$ is disjoint from $\Sigma_{1}, \dots , \Sigma_{n}$, 
and $S \cap \Sigma$ consists of 
(possibly empty) loops parallel to $\gamma$. 
If $S \cap \Sigma = \emptyset$, 
then $S$ bounds a ball in $M \setminus N(\gamma)$. 
Otherwise $\gamma$ bounds a disk $D$ in $M$ 
such that $\Sigma \cap D = \gamma$. 
Then $\Sigma \cup D$ is an essential annulus, 
which contradicts the fact that $M$ is hyperbolic. 
Hence $M \setminus N(\gamma)$ is irreducible. 

\fig[width=12cm]{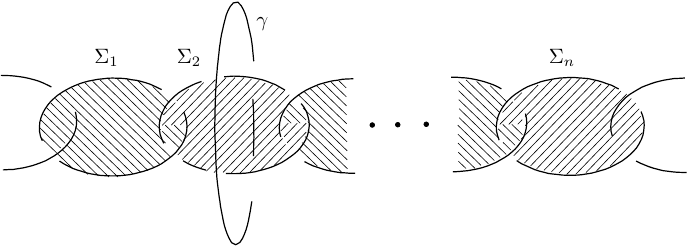}{A drilled loop $\gamma$ in $M$}

If $M \setminus N(\gamma)$ is hyperbolic, 
then it is sufficient to let $M_{0} = M \setminus N(\gamma)$. 
Suppose that $M \setminus N(\gamma)$ is not hyperbolic. 
Since $M$ is hyperbolic, 
$M \setminus N(\gamma)$ is not a graph 3-manifold. 
Hence there are non-empty family of essential disjoint tori $T_{1}, \dots , T_{m}$ 
for the JSJ decomposition of $M \setminus N(\gamma)$. 
Then we may assume that $T_{i} \cap \Sigma_{j} = \emptyset$. 
After the JSJ decomposition of $M \setminus N(\gamma)$ along $T_{1} , \dots , T_{m}$, 
there is a piece $M_{0}$ such that $\Sigma_{1} \cup \dots \cup \Sigma_{n} \subset M_{0}$. 
We may assume that the frontier of $M_{0}$ is $T_{1} , \dots , T_{k}$. 
Since $M$ is hyperbolic, 
each $T_{i}$ for $1 \leq i \leq k$ 
is compressible in $M$. 
Hence the manifold $M$ is obtained by a Dehn filling of $M_{0}$ along $T_{1} , \dots , T_{k}$. 
Since $\gamma$ is contained in a single solid torus for this Dehn filling, 
we have $k=1$. 
We can take $T_{1}$ by isotopy so that 
$\Sigma \cap M_{0}$ is a 3-punctured sphere in $M_{0}$. 
Hence $M_{0}$ contains 3-punctured spheres of the type $B_{2n}$. 
Moreover, the piece $M_{0}$ is hyperbolic.

The manifold $M_{0}$ can be decomposed along two 3-punctured spheres 
into the manifold $\mathbb{B}_{n+1}$ 
and a (possibly empty) manifold $M'_{0}$. 
We consider the cusp $C$ of $M_{0}$ on which we perform a Dehn filling. 
Let a meridian of $C$ be homotopic to $\gamma$ in $M$. 
Then the outer boundary component of a blue 3-punctured sphere 
of $\mathbb{B}_{n+1}$ 
in Figure~\ref{fig:u3ps-b2n} is a meridian. 
We can construct a fundamental domain of 
the annular cusp $C^{\prime}= C \cap \mathbb{B}_{n+1}$ 
by $4(n+1)$ Euclidean unit squares. 
This is obtained from the decomposition of $\mathbb{B}_{1}$ 
into a regular ideal octahedron shown in Figure~\ref{fig:u3ps-b1decomp}. 
By adding a fundamental domain $F$ of $C \setminus C^{\prime}$, 
we obtain a fundamental domain of $C$ 
as shown in Figure~\ref{fig:u3ps-domain}. 
Let $r \geq 0$ denote the height of $F$ 
with respect to the meridian as the base of $F$. 
Note that $M_{0}$ is 
$\mathbb{W}_{n+1}$ or $\mathbb{W}^{\prime}_{n+1}$
if $r=0$. 
When we normalize $C$ to have unit area, 
the length of the meridian is $2/\sqrt{n+1+r}$. 
Since $M$ is hyperbolic, 
the slope of the Dehn filling of $M_{0}$ is not the meridian. 
Hence the length of this slope is 
at least $\sqrt{n+1+r}/2$. 
\end{proof}

\fig[width=8cm]{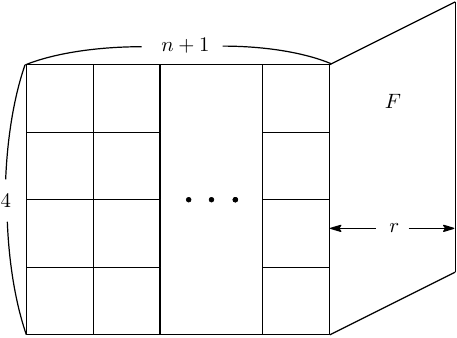}{A fundamental domain of the cusp $C$}

We finally complete the classification for infinitely many 3-punctured spheres. 
\begin{proof}[Proof of Theorem~\ref{thm:infinite}.]
If infinitely many intersecting 3-punctured spheres are placed linearly, 
the adjacent cusp modulus is equal to $2i$ by Theorem~\ref{thm:boundlim}. 
Then there is a cusp that bounds additional 3-punctured spheres. 
Hence the union of the 3-punctured spheres is $B_{\infty}$ or $\Whi_{\infty}$. 
The argument in Section~\ref{section:proof} implies that 
there is no other type of the union. 
\end{proof}

\section*{Acknowledgements} 
The author would like to thank Hidetoshi Masai 
for his several helpful comments. 
This work was supported by 
JSPS KAKENHI Grant Numbers 24224002, 15H05739.

\bibliography{ref-u3ps}

\textsc{Graduate School of Science and Engineering,  
Saitama University, 255 Shimo-Okubo, 
Sakura-ku, Saitama-shi, 338-8570, Japan.} 

\textit{E-mail address}: \texttt{kyoshida@mail.saitama-u.ac.jp}

\end{document}